\documentclass{article}

\usepackage[english]{babel}

\usepackage[letterpaper,top=2cm,bottom=2cm,left=3cm,right=3cm,marginparwidth=1.75cm]{geometry}

\usepackage{amsmath}
\usepackage{amssymb}

\IfFileExists{euscript.sty}{\usepackage[mathcal]{euscript}}{} 
\usepackage[all]{xy}
\usepackage{amsthm}
\usepackage{graphicx}
\usepackage[colorlinks=true, allcolors=blue]{hyperref}
\usepackage{enumerate}

\newcommand{\nc}{\newcommand}
\nc{\kk}{\mathbf{k}}
\newcommand{\DMO}{\DeclareMathOperator}

\nc{\eqn}{\begin{equation}}
\nc{\eqnn}{\begin{equation*}}
\nc{\eqnd}{\end{equation}}
\nc{\eqnnd}{\end{equation*}}
\nc{\enum}{\begin{enumerate}}
\nc{\enumd}{\end{enumerate}}

\DMO{\str}{str}
\DMO{\htpy}{htpy}
\DMO{\id}{id}
\DMO{\ob}{ob}
\DMO{\ho}{ho}

\nc{\eqs}{\mathbf{w}}
\nc{\quis}{\mathbf{quis}}

\nc{\Cat}{\mathsf{Cat}}
\nc{\Mod}{\mathsf{Mod}}
\nc{\inftyCat}{\mathcal{C}\!\operatorname{at}_\infty}
\nc{\winftyCat}{\mathcal{WC}\!\operatorname{at}_\infty}
\nc{\inftyGpd}{\mathcal{G}\!\operatorname{pd}_\infty}
\nc{\Ainftycat}{\mathcal{C}\!\operatorname{at}_{A_\infty}}
\nc{\dgcat}{\mathcal{C}\!\operatorname{at}_{dg}}
\nc{\Ainftycatt}{A_\infty Cat}
\nc{\dgcatt}{dg Cat}
\nc{\dg}{\mathsf{dg}}
\nc{\simpcatt}{SimpCat}
\nc{\spaces}{\mathcal{S}\!\operatorname{paces}}
\nc{\algcat}{\operatorname{Alg}_{\operatorname{cat}}}
\nc{\Set}{\operatorname{Set}}
\DMO{\End}{End}
\DMO{\bimod}{Bimod}

\nc{\homcof}{\operatorname{cof}}
\nc{\homproj}{\operatorname{hproj}}

\DMO{\FFES}{FFES}

\nc{\Ainftystr}{\Ainfty^{\str}}
\nc{\Ainfty}{{\mathbb A}_\infty}
          \DMO{\op}{op}
          \DMO{\tw}{Tw}
          \DMO{\Map}{Map}
          \DMO{\aut}{Aut}
          \DMO{\cone}{Cone}
          \DMO{\colim}{colim}
          \DMO{\stable}{Ex}
          \DMO{\hocolim}{hocolim}	
          \nc{\xto}{\xrightarrow}
          \nc{\xra}{\xto}
          \nc{\tensor}{\otimes}
          \nc{\Tw}{\mathsf{Tw}}
          \nc{\into}{\hookrightarrow}
          \nc{\sets}{\mathsf{Sets}}
          \nc{\sing}{\mathsf{Sing}}
          \nc{\sset}{\mathsf{sSet}}
          \nc{\chain}{\mathsf{Chain}}
          \nc{\fun}{\mathsf{Fun}}

\nc{\ba}{\mathbf a}
\nc{\bq}{\mathbf q}
\nc{\bx}{\mathbf x}
\nc{\bw}{\mathbf w}
\nc{\unQ}{\underline{Q}}

\nc{\Aploc}{A^+[\id_A^{-1}]}
\nc{\Aploco}{A^+[\id_0^{-1}]}

     \def\cA{\mathcal A} \def\cB{\mathcal B} \def\cC{\mathcal C} \def\cD{\mathcal D} \def\cE{\mathcal E}                 \def\cV{\mathcal V} \def\cW{\mathcal W}  \def\cY{\mathcal Y}  
     
             \def\II{\mathbb I} \def\JJ{\mathbb J}  \def\LL{\mathbb L}       \def\SS{\mathbb S} \def\TT{\mathbb T}      \def\ZZ{\mathbb Z} 
     
        \def\sD{\mathsf D}             \def\sq{\mathsf q}

 \def\kk{\mathbf k}

\nc{\hiro}{\textcolor{blue}}
\nc{\done}{\vskip1em\textcolor{red}{$\heartsuit\heartsuit$}}
          \theoremstyle{definition}
          \newtheorem{theorem}{Theorem}[section]
          
          \newtheorem{prop}[theorem]{Proposition}
          \newtheorem{lemma}[theorem]{Lemma}

          \newtheorem{corollary}[theorem]{Corollary}
          \newtheorem{construction}[theorem]{Construction}

          \newtheorem{defn}[theorem]{Definition}
          \newtheorem{notation}[theorem]{Notation}
          \newtheorem{example}[theorem]{Example}

          \newtheorem{recollection}[theorem]{Recollection}
          \newtheorem{remark}[theorem]{Remark}
          \newtheorem{convention}[theorem]{Convention}

\title{Unitalities and mapping spaces in $A_\infty$-categories}
\author{Hiro Lee Tanaka}

\begin{document}
\maketitle

\begin{abstract}
We prove, over any base ring, that the infinity-category of strictly unital A-infinity-categories (and strictly unital functors) is equivalent to the infinity-category of unital A-infinity-categories (and unital functors). We also identify various models for internal homs and mapping spaces in the infinity-categories of dg-categories and of A-infinity--categories, generalizing results of To\"en and Faonte.
\end{abstract}

\tableofcontents

\section{Introduction}

Fix a commutative unital base ring $\kk$ and let $A$ be an $A_\infty$-category over $\kk$. We recall two notions of unitality.

Given an object $X$, a closed degree-zero element $e \in \hom_A(X,X)$ is called a {\em unit} if for every object $W$, the chain maps
	\eqn\label{eqn. compositions with e}
	m^2(e,-): \hom_A(W,X) \to \hom_A(W,X) \qquad\text{and}\qquad m^2(-,e): \hom_A(X,W) \to \hom_A(X,W)
	\eqnd
are homotopic to the identity chain maps.
$A$ is called unital if every object $X$ admits a unit, and a functor between two unital $A_\infty$-categories is called unital if units are sent to units. We let
	$
	\Ainfty
	$
denote the category (in the classical sense) of unital $A_\infty$-categories and unital functors.

A closed degree-zero $e \in \hom_A(X,X)$ is a {\em strict} unit if $m^n$ ($n \geq 3$) vanishes whenever at least one of the $n$ inputs is $e$, and if the maps~\eqref{eqn. compositions with e} are the identity chain maps. We say that $A$ is {\em strictly unital} if every object admits a strict unit, and a functor among strictly unital $A_\infty$-categories is called strictly unital if the image of every strict unit is a strict unit. $\Ainftystr$ denotes the category (in the classical sense)  of strictly unital $A_\infty$-categories and strictly unital functors. 

Finally, a functor $f$ of unital $A_\infty$-categories is called a {\em quasi-equivalence} if $f$ is an equivalence of cohomology categories (i.e., essentially surjective on cohomology categories, and the maps of morphism complexes are all quasi-isomorphisms). Abusing notation, we let
	\eqnn
	\eqs \subset \Ainftystr,
	\qquad
	\eqs \subset \Ainfty
	\eqnnd
denote the collection of quasi-equivalences in each category. The $\eqs$ is meant to convey the term ``weak equivalences'' from homotopy theory, or from the theory of model categories. 

\subsection{Main results}
Our first main result states that the $\infty$-category of strictly unital $A_\infty$-categories is equivalent to the $\infty$-category of unital $A_\infty$-categories. 
We refer to Remark~\ref{recollection. localizations} for more on localizations of $\infty$-categories.

\begin{theorem}\label{theorem. main theorem}
The inclusion $j: \Ainftystr \into \Ainfty$ induces an equivalence of $\infty$-categorical localizations
	\eqnn
	\Ainftystr[\eqs^{-1}] \to \Ainfty[\eqs^{-1}].
	\eqnnd
\end{theorem}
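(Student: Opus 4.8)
The plan is to exhibit the inclusion $j\colon \Ainftystr \into \Ainfty$ as a localization-inducing functor by constructing a homotopy inverse at the level of localized $\infty$-categories. The natural candidate is a strictification functor $S\colon \Ainfty \to \Ainftystr$ sending a unital $A_\infty$-category $A$ to a strictly unital model $S(A)$ together with a quasi-equivalence $A \to j S(A)$ (or $jS(A) \to A$), natural enough to descend to the localizations. The classical input is that every unital $A_\infty$-category is quasi-equivalent to a strictly unital one — this is well known (e.g.\ via Koszul-duality/bar-cobar arguments, or by Lef\`evre-Hasegawa's work on strictly unital minimal models), and the content here is organizing such a construction into an honest functor on $\Ainfty$ that preserves $\eqs$. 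I would first pin down one such $S$ and the structure map $\eta_A\colon A \to jS(A)$, checking $\eta_A \in \eqs$ for all $A$ and that $S$ carries $\eqs$ into $\eqs$ (both reduce to statements about cohomology categories and quasi-isomorphisms of hom-complexes, which the construction makes transparent).

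Second, I would invoke the standard localization criterion: if $F\colon \cC \rightleftarrows \cD \colon G$ are functors between relative categories such that $F(\eqs_\cC)\subseteq \eqs_\cD$, $G(\eqs_\cD)\subseteq \eqs_\cC$, and there exist zig-zags of natural weak equivalences $GF \Leftarrow \cdot \Rightarrow \id$ and $FG \Leftarrow \cdot \Rightarrow \id$, then $F$ and $G$ induce inverse equivalences on localizations. Here I would take $F = S$, $G = j$. The composite $jS$ comes equipped with $\eta\colon \id_{\Ainfty} \Rightarrow jS$, a natural transformation that is an objectwise quasi-equivalence; that handles one side. For the other side, $Sj(B) = S(B)$ for $B$ strictly unital, and I need $S(B)$ to be connected to $B$ by a natural zig-zag of \emph{strictly unital} quasi-equivalences — ideally $\eta_B\colon B \to jS(B)$ is itself already strictly unital when $B$ is, or factors through one. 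Verifying this compatibility is the crux: the strictification map for an already-strictly-unital input should be (homotopic to, through a natural zig-zag of) a strictly unital equivalence, so that the unit and counit of the adjunction-like pair are weak equivalences \emph{in the relevant categories with their classes $\eqs$}.

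The main obstacle I anticipate is exactly this naturality-and-strictness bookkeeping: producing a strictification $S$ that is strictly functorial on all of $\Ainfty$ (not just functorial up to homotopy), that lands in $\Ainftystr$, that preserves quasi-equivalences, and whose structure map restricts well over $\Ainftystr$. A clean way to get strict functoriality is to realize $S$ as a composite of a cofibrant-type replacement and a truncation/collapsing of redundant unit data — for instance, adjoining a strict unit freely and then quotienting, or passing through a model-categorical or simplicial resolution where such replacements are functorial. One then has to check that no size issues arise and that the zig-zags above can be chosen naturally. Once $S$, $\eta$, and the restricted behavior over $\Ainftystr$ are in hand, the conclusion that $\Ainftystr[\eqs^{-1}] \to \Ainfty[\eqs^{-1}]$ is an equivalence is formal from the localization criterion, since $j$ already preserves $\eqs$ by definition and $\eta$ witnesses the triangle identities up to coherent homotopy.
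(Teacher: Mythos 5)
Your proposal follows essentially the same route as the paper: the strictification functor is realized there as $\tau(A) = A^+[\id_A^{-1}]$ (freely adjoin strict units, then invert the old units), the natural transformation $\id_{\Ainfty} \to j\circ\tau$ of objectwise quasi-equivalences supplies the right inverse via Proposition~\ref{prop. good natural transformations induce homotopies} (your ``localization criterion,'' used exactly as you describe, with no adjunction needed), and the crux you correctly single out --- comparing $\tau j(B)$ with $B$ for strictly unital $B$ --- is resolved by the natural zig-zag $\id_{\Ainftystr} \to \tau' \leftarrow \tau\circ j$ with $\tau'(A)=A[\id_A^{-1}]$ (Lemma~\ref{lemma. j has left inverse}). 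The one item your plan defers to ``classical input'' is where the paper's actual work lies: that the inclusion $A \to A^+[\id_A^{-1}]$ is a quasi-equivalence (Lemma~\ref{lemma. right inverse}) does not follow from the mere existence of strictly unital replacements and requires a genuine length-filtration argument on the bar-construction model of the localization, with a delicate homotopy in the endomorphism case.
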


\begin{remark}
The proof method of Theorem~\ref{theorem. main theorem} is different from that employed by Pascaleff (and later by Canonaco-Ornaghi-Stellari)~\cite{pascaleff,cos-over-rings}. The cited works utilize a Dwyer-Kan adjunction between relative categories, but in practice one need not use the full power of an adjunction. Indeed, a basic fact about localizations of $\infty$-categories (manifested as Proposition~\ref{prop. good natural transformations induce homotopies} below) allows one to construct left and right inverses to the putative equivalence of $\infty$-categories, without any demand that the left and right inverses be adjoints to the putative equivalence -- nor equal! The intuition is that if a natural transformation $\eta$, before passing to localizations, happens to consist of morphisms in $\eqs$, then $\eta$ must induce a natural {\em equivalence} after localization.
\end{remark}

Now let $\dgcatt \to \Ainfty^{\str}$ be the inclusion of dg-categories and dg-functors into strictly unital $A_\infty$-categories and strictly unital $A_\infty$ functors. Results of Pascaleff (over a field) and Canonaco-Ornaghi-Stellari (over a commutative ring)~\cite{pascaleff, cos-over-rings} show that this inclusion induces an equivalence of $\infty$-categories upon localization. Thus, an immediate corollary of 
Theorem~\ref{theorem. main theorem} is:

\begin{corollary}
The inclusion $\dgcatt\to \Ainfty $ induces an equivalence of $\infty$-categories
	\eqnn
	\dgcatt[\eqs^{-1}] \to \Ainfty [\eqs^{-1}].
	\eqnnd
\end{corollary}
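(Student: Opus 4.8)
The plan is to factor the inclusion $\dgcatt \to \Ainfty$ through $\Ainftystr$ and then combine two equivalences that are already in hand. A dg-category is precisely a strictly unital $A_\infty$-category whose higher operations $m^n$ ($n \geq 3$) vanish, and a dg-functor is precisely a strictly unital $A_\infty$-functor with vanishing higher components; hence the inclusion of dg-categories into $A_\infty$-categories factors as
	\eqnn
	\dgcatt \to \Ainftystr \xto{\,j\,} \Ainfty,
	\eqnnd
where $j$ is the functor appearing in Theorem~\ref{theorem. main theorem}.

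First I would check that both arrows in this factorization preserve the classes $\eqs$ of quasi-equivalences. This is immediate from the definitions: in each of the three categories a functor is a quasi-equivalence exactly when it induces an equivalence of cohomology categories (essential surjectivity on objects together with quasi-isomorphisms on all morphism complexes), and the cohomology category depends only on $m^1$ and $m^2$, which the inclusions leave unchanged. Applying the $\infty$-categorical localization functor (see Remark~\ref{recollection. localizations}) and using its functoriality on relative categories, the map on localizations induced by the inclusion is identified with the composite
	\eqnn
	\dgcatt[\eqs^{-1}] \to \Ainftystr[\eqs^{-1}] \xto{\,j\,} \Ainfty[\eqs^{-1}].
	\eqnnd
The first arrow is an equivalence of $\infty$-categories by the results of Pascaleff over a field and of Canonaco--Ornaghi--Stellari over a commutative base ring \cite{pascaleff, cos-over-rings}; the second arrow is an equivalence by Theorem~\ref{theorem. main theorem}. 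Since a composite of equivalences of $\infty$-categories is an equivalence, the corollary follows.

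I do not expect a genuine obstacle here: all of the mathematical content lives in Theorem~\ref{theorem. main theorem} together with the cited dg-versus-$A_\infty$ comparison, and everything else is formal. The only point deserving a word of care is that the three localizations are taken with respect to mutually compatible classes of weak equivalences and that the composite of the two localization maps really does agree with the localization of the composite inclusion; both are consequences of the uniform definition of quasi-equivalence via cohomology categories and of the functoriality of $\infty$-categorical localization.
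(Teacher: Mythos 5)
Your proposal is correct and is exactly the paper's argument: the corollary is stated there as an immediate consequence of factoring the inclusion as $\dgcatt \to \Ainftystr \to \Ainfty$, invoking \cite{pascaleff, cos-over-rings} for the first arrow and Theorem~\ref{theorem. main theorem} for the second. Your added remarks on preservation of quasi-equivalences and compatibility of the localizations are fine but implicit in the paper.
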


\begin{notation}
We let
	\eqnn
	\Ainftycat
	\eqnnd
denote $\Ainfty[\eqs^{-1}]$. We refer to it as the {\em $\infty$-category of $A_\infty$-categories} (over $\kk$). Implicit in the nomenclature is that all objects and morphisms are unital.
\end{notation}

In general, computing the mapping spaces of a localization such as $\Ainftycat$ is a difficult task. However, we build upon the works~\cite{gepner-haugseng, pascaleff, cos-over-rings} to prove 
$\Ainftycat$ has internal homs. 
Moreover, we prove that the $\infty$-categorical Gepner-Haugseng self-enrichment of $\Ainftycat$ 
(see Corollary~7.4.10 of~\cite{gepner-haugseng})
and the point-set Lyubashenko self-enrichment
(by dg co-categories; see Section 4 of~\cite{lyubashenko-category-of}) are compatible when one considers sufficiently cofibrant $A_\infty$-categories (see Definition~\ref{defn. homotopically projective}). This allows us to compute mapping spaces in $\Ainftycat$:

\begin{theorem}
\label{theorem. mapping spaces}
Let $A$ and $B$ be unital $A_\infty$-categories, and $A' \to A$ any quasi-equivalence from an $A_\infty$-category with homotopically projective morphism complexes. Then:
\enum[(a)]
	\item The internal hom object $\underline{\hom}_{\Ainftycat}(A,B)$ is equivalent in $\Ainftycat$ to the $A_\infty$-category 
		\eqnn
		\fun_{A_\infty}(A',B)
		\eqnnd
	of unital $A_\infty$-functors.
	\item The mapping space $\hom_{\Ainftycat}(A,B)$ is homotopy equivalent to the space
	\eqnn
	N(\fun_{A_\infty}(A',B))^{\sim}
	\eqnnd
where $N$ is the $A_\infty$-nerve and $N(-)^{\sim}$ denotes the largest $\infty$-groupoid contained in the nerve.
\enumd
\end{theorem}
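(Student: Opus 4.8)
The plan is to deduce both parts from two ingredients. The first is a closed symmetric monoidal structure on $\Ainftycat$ that, at the level of localizations, refines the point--set adjunction on $\Ainfty$ between the tensor product $\tensor$ of $A_\infty$-categories and the $A_\infty$-functor category $\fun_{A_\infty}(-,-)$; here I build on the cited work of Gepner--Haugseng, Pascaleff, and Canonaco--Ornaghi--Stellari \cite{gepner-haugseng,pascaleff,cos-over-rings}, which (together with Theorem~\ref{theorem. main theorem}) furnishes the internal homs of $\Ainftycat$ through the Gepner--Haugseng self-enrichment. The second is the identification, for every unital $A_\infty$-category $C$, of the mapping space out of the monoidal unit $\mathbf{1}$ (the $\kk$-linear category with a single object and endomorphism algebra $\kk$) with the core of the $A_\infty$-nerve,
\eqn\label{eqn. unit nerve}
\hom_{\Ainftycat}(\mathbf{1}, C) \;\simeq\; N(C)^{\sim}.
\eqnd
Granting these, part (a) is the assertion that $\fun_{A_\infty}(A',B)$ already computes the derived internal hom, and part (b) then follows formally from (a), the tensor--hom adjunction (using $\mathbf{1}\tensor A\simeq A$), and \eqref{eqn. unit nerve} applied to $C=\fun_{A_\infty}(A',B)$:
\begin{align*}
\hom_{\Ainftycat}(A,B) &\simeq \hom_{\Ainftycat}\big(\mathbf{1},\, \underline{\hom}_{\Ainftycat}(A,B)\big) \\
&\simeq \hom_{\Ainftycat}\big(\mathbf{1},\, \fun_{A_\infty}(A',B)\big) \;\simeq\; N\big(\fun_{A_\infty}(A',B)\big)^{\sim}.
\end{align*}

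For the monoidal structure and part (a): I would first check that $\tensor$ descends to $\Ainftycat$ --- the tensor product of a quasi-equivalence with an $A_\infty$-category whose morphism complexes are homotopically projective, hence in particular homotopically flat, is again a quasi-equivalence, which one sees on morphism complexes --- and that this suffices to localize the symmetric monoidal structure together with its closure. Matching the Gepner--Haugseng internal hom with the point--set Lyubashenko self-enrichment of \cite{lyubashenko-category-of} on sufficiently cofibrant objects, one obtains that $\underline{\hom}_{\Ainftycat}(A,B)$ is modeled by $\fun_{A_\infty}(QA,B)$ for a resolution $QA\to A$ computing the derived internal hom (no replacement of $B$ being necessary, since $\fun_{A_\infty}(QA,-)$ preserves quasi-equivalences). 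The remaining content of (a) is that an \emph{arbitrary} quasi-equivalence $A'\to A$ from an $A_\infty$-category with homotopically projective morphism complexes (Definition~\ref{defn. homotopically projective}) is equally good; for this I would prove the invariance lemma that $\fun_{A_\infty}(-,B)$ sends a quasi-equivalence $C\to C'$ between $A_\infty$-categories with homotopically projective morphism complexes to a quasi-equivalence. Its fully-faithful half is a computation on the morphism complexes of $\fun_{A_\infty}$, which are assembled from $\hom_{\kk}\!\big(\hom_C(-)^{\tensor n},\, \hom_B(-)\big)$ via the bar formula: homotopic projectivity of the $\hom_C(-)$ makes each such functor preserve quasi-isomorphisms, and filtering by the number $n$ of bar inputs (using the complete filtration on the product over $n$, whose spectral sequence converges) upgrades this to a quasi-isomorphism of total complexes. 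Its essential-surjectivity half is the standard extension of $A_\infty$-functors along quasi-equivalences. Applying the lemma to a quasi-equivalence joining $A'$ to a resolution $QA$ of $A$ --- both of which have homotopically projective morphism complexes --- yields $\fun_{A_\infty}(A',B)\simeq\fun_{A_\infty}(QA,B)\simeq\underline{\hom}_{\Ainftycat}(A,B)$ in $\Ainftycat$, which is part (a).

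For part (b) it remains to establish \eqref{eqn. unit nerve}, which by Theorem~\ref{theorem. main theorem} may be checked inside $\Ainftystr[\eqs^{-1}]$ (replacing $C$ by a quasi-equivalent strictly unital model if necessary). Let $P[\Delta^n]$ be the cofibrant strictly unital $A_\infty$-category corepresenting the $n$-simplices of the $A_\infty$-nerve --- so $\hom_{\Ainftystr}(P[\Delta^n],-)\cong N(-)_n$ and $P[\Delta^0]=\mathbf{1}$ --- and let $\mathfrak{a}[\Delta^n]$ be obtained from $P[\Delta^n]$ by formally inverting all of its edges, so that $\hom_{\Ainftystr}(\mathfrak{a}[\Delta^n],C)\cong N(C)^{\sim}_n$. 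Then $\mathfrak{a}[\Delta^0]=\mathbf{1}$, each $\mathfrak{a}[\Delta^n]$ is quasi-equivalent to $\mathbf{1}$ (it models the $\kk$-linearization of the contractible groupoid on $\{0,\dots,n\}$), and $[n]\mapsto\mathfrak{a}[\Delta^n]$ is a Reedy cofibrant cosimplicial object, hence a cosimplicial frame on $\mathbf{1}$. Since every strictly unital $A_\infty$-category is fibrant, the mapping space $\hom_{\Ainftycat}(\mathbf{1},C)$ is computed by the simplicial set $[n]\mapsto\hom_{\Ainftystr}(\mathfrak{a}[\Delta^n],C)=N(C)^{\sim}_n$, i.e.\ by $N(C)^{\sim}$, which is \eqref{eqn. unit nerve}. (As a cross-check and an alternative, one may reduce \eqref{eqn. unit nerve} to the dg case treated by To\"en and Faonte, using the Corollary following Theorem~\ref{theorem. main theorem} together with the facts that $N$ is invariant under quasi-equivalence and restricts to the dg-nerve on dg-categories.) Combining this with part (a) proves (b).

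The main obstacle is twofold. First, establishing that homotopically projective morphism complexes on $A'$ genuinely suffice: this is strictly weaker than $A'$ being cofibrant as an $A_\infty$-category, so the invariance lemma for $\fun_{A_\infty}(-,B)$ and the comparison to a resolution of $A$ carry the real weight; and over an arbitrary commutative ring $\kk$ one must keep careful track of exactly which projectivity and flatness hypotheses the bar formula and the iterated $\hom_{\kk}(-)$'s actually use, so as not to smuggle in a field or Noetherian hypothesis (this being the point of the generality of \cite{cos-over-rings}). Second, the identity \eqref{eqn. unit nerve} is where the point--set $A_\infty$-nerve has to be reconciled with the $\infty$-categorical localization: one must verify that $[n]\mapsto\mathfrak{a}[\Delta^n]$ is a bona fide (Reedy cofibrant) cosimplicial frame on $\mathbf{1}$ and that the bijection $\hom_{\Ainftystr}(\mathfrak{a}[\Delta^n],C)\cong N(C)^{\sim}_n$ is natural in $[n]$ and compatible with the simplicial structure. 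By comparison, descending the symmetric monoidal structure and the tensor--hom adjunction, and matching the Gepner--Haugseng and Lyubashenko self-enrichments, are comparatively formal, given \cite{gepner-haugseng,lyubashenko-category-of} and Theorem~\ref{theorem. main theorem}.
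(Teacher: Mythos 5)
Your skeleton matches the paper's: establish a closed symmetric monoidal structure on $\Ainftycat$, identify the internal hom with a functor $A_\infty$-category, and then obtain the mapping space by adjoining against the monoidal unit and identifying $\hom_{\Ainftycat}(\mathbf{1},-)$ with the core of the nerve. The differences lie in how the two key lemmas are discharged, and in both places your proposed arguments need repair. For the invariance of $\fun_{A_\infty}(-,B)$ under quasi-equivalences between $A_\infty$-categories with homotopically projective morphism complexes, you filter the morphism complexes of the functor category by the number of bar inputs and run a spectral sequence. But those morphism complexes are \emph{products} over $n$ of terms $\hom_{\kk}(\hom_C(-)^{\tensor n},\hom_B(-))$, so the word-length filtration is complete rather than exhaustive, and convergence of the associated spectral sequence is only conditional; without an additional argument (a $\lim^1$-vanishing statement, or a boundedness hypothesis you do not have over a general $\kk$) the claimed upgrade to a quasi-isomorphism of total complexes does not follow. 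The paper sidesteps this entirely: Proposition~\ref{prop. whitehead for a infty} (a Whitehead theorem) shows that a quasi-equivalence between homotopically projective $A_\infty$-categories admits a genuine inverse up to natural equivalence, and Proposition~\ref{prop. pullback along equivalence is equivalence} then gives the invariance of $\fun_{A_\infty}(-,B)$ by pure functoriality, with no filtration argument. The base identification of the internal hom with a functor category is likewise not proved from scratch in the paper but imported from Theorem~C of~\cite{cos-over-rings}, which is a statement about dg-categories; the passage to general unital $A_\infty$-categories goes through the Yoneda embedding $A \to \cY(A)$, a step your outline omits but which is needed precisely because the To\"en and Canonaco--Ornaghi--Stellari identifications live in the dg world.

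For the identity $\hom_{\Ainftycat}(\mathbf{1},C)\simeq N(C)^{\sim}$, your cosimplicial-frame argument is set in $\Ainftystr$, which is not known to carry a model structure (the paper notes that the unital variant does not even admit all equalizers), so ``Reedy cofibrant cosimplicial object,'' ``every object is fibrant,'' and ``frames compute mapping spaces'' have no direct meaning there; the construction would have to be carried out in $\dgcatt$ with the Tabuada model structure and then transported. Your parenthetical fallback --- reduce to the dg case --- is in fact the paper's actual route: Proposition~\ref{prop. dg nerve is right adjoint} identifies the dg nerve as the right adjoint of the free functor $\inftyCat\to\dgcat$ by descending Lurie's Quillen adjunction to $\infty$-categories, whence $\hom_{\dgcat}(\mathbf{1},X)\simeq\hom_{\inftyCat}(\Delta^0,NX)\simeq N(X)^{\sim}$, and Recollection~\ref{recollection. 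Aoo nerve} together with the equivalence $\dgcat\simeq\Ainftycat$ transports this to the $A_\infty$-nerve. So the proposal is correct in outline, but the two steps you yourself identify as carrying the real weight are exactly the ones whose written arguments would fail as stated.
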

In fact, we record many names for internal hom objects and mapping spaces -- see Theorems~\ref{theorem. internal homs in dgcat},
\ref{theorem. maps in dgcat},
\ref{theorem. internal homs in Aoocat}, and
\ref{theorem. mapping spaces in Aoocat}, which subsume Theorem~\ref{theorem. mapping spaces}.

\begin{remark}
It is necessary to replace $A$ by a well-behaved $A'$. For example, take $\kk = \ZZ$ and let $A$ be the $A_\infty$-category with one object and endomorphism ring $\ZZ/2\ZZ$. Take $A'$ to be a semifree resolution -- e.g., the dga $\ZZ \xrightarrow{\times 2} \ZZ$. Then $\fun(A,A')$ is empty while $\fun(A',A')$ is not. That is, functor $A_\infty$-categories are not invariant under arbitrary quasi-equivalences in the domain variable. However, $A_\infty$-categories with homotopically projective morphism complexes admit a Whitehead theorem (Proposition~\ref{prop. whitehead for a infty}), ensuring the invariance of functor categories under quasi-equivalences among such $A_\infty$-categories (Proposition~\ref{prop. pullback along equivalence is equivalence}). 
\end{remark}

\begin{remark}
The category of unital $A_\infty$-categories has no model structure (for one thing, it does not admit all equalizers -- see Section~1.5 of~\cite{cos-localizations}). Regardless, as we have hinted above, there is a good class of objects that behave as though they deserve the labels of cofibrant and fibrant:  $A_\infty$-categories with homotopically projective morphism complexes. (See Remark~\ref{defn. homotopically projective} and Section~\ref{section. whitehead style}.) The good behavior of this class of $A_\infty$-categories is exploited throughout Section~\ref{section. interhal homs and maps}.
\end{remark}

\begin{remark}
\label{remark. hp replacements}
If $A$ is a unital $A_\infty$-category, one can always find a quasi-equivalence $A' \to A$ from a unital $A_\infty$-category $A'$ with homotopically projective morphism complexes. To see this, recall that the Yoneda embedding induces a map $A \to \cY(A)$ where $\cY(A)$ is a dg-category, and the map is invertible up to natural equivalence of $A_\infty$ functors. (See the proof of Proposition~\ref{prop. homotopically projective abstract equivalences are equivalences}.) Moreover, there exists a cofibrant replacement quasi-equivalence $A' \to \cY(A)$ of dg-categories by the Tabuada model structure, and any cofibrant dg-category has homotopically projective morphism complexes. (See Notation~\ref{notation. dgcat versions}.) Letting $\cY(A) \to A$ be a homotopy inverse, the composition $A' \to \cY(A) \to A$ establishes the claim.
\end{remark}

\begin{remark}
For most models of Fukaya categories, the morphism complexes are free as graded modules. Hence most models of Fukaya categories have homotopically projective morphism complexes.
\end{remark}

\begin{remark}
Theorem~\ref{theorem. mapping spaces} contrasts with the dg case. There, even when $A$ is cofibrant, the  dg-category of dg-functors from $A$ to $B$ is rarely expected to compute the correct mapping spaces in $\dgcat$. Instead, the usual representative (as identified by To\"en~\cite{toen-homotopy-theory-of-dg-cats})  for computing the internal hom dg-category has been the dg-category of cofibrant right quasi-representable modules over $A^{\op} \tensor B$. Indeed, it seems one strong motivation for the work~\cite{cos-over-rings} was to give a representative of an internal hom object identifiable as a functor category. We build upon ibid. to indeed show that $A_\infty$ functor spaces between dg-categories computes the correct mapping space in the $\infty$-category $\dgcat$. 
(See Theorem~\ref{theorem. maps in dgcat}.)
\end{remark}

\subsection{Five equivalent $\infty$-categories}\label{section. THE infinity-category}
Let us explain the following equivalences of $\infty$-categories:
	\eqn\label{eqn. five models}
	\xymatrix{
	\dgcat
		&& \ar[ll]^{\sim}_{\cite{muro,tabuada-model-structures-on-dg-cats}} \dgcatt[\eqs^{-1}] \ar[rr]_{\sim}^{\cite{haugseng-rectification-enriched}} \ar[d]_{\sim}^{\cite{pascaleff,cos-over-rings}}
		&& \inftyCat^{\chain_k} \\
		&& \Ainfty^{\str}[\eqs^{-1}] \ar[d]_{\sim}^{\text{Thm}~\ref{theorem. main theorem}} \\
		&& \Ainfty[\eqs^{-1}].
	}
	\eqnd
First, the top row: Recall that over an arbitrary base ring $\kk$, Haugseng~\cite{haugseng-rectification-enriched} proved that the $\infty$-category of dg-categories models the $\infty$-category of $\infty$-categories enriched in chain complexes. More accurately, Haugseng proved\footnote{This is Theorem~5.8 of~\cite{haugseng-rectification-enriched}, taking ${\bf V}$ to be the projective model structure on chain complexes over a ring $\kk$. } that the localization of the 1-category $\dg$ of dg-categories, along quasi-equivalences, is equivalent to the $\infty$-category $\inftyCat^{\chain_k}$ of $\infty$-categories enriched in chain complexes over $\kk$. 

On the other hand, $\dg[\eqs^{-1}]$ admits the Tabuada model structure~\cite{tabuada-model-structures-on-dg-cats} on dg-categories\footnote{This is a special case of Muro's model structure on enriched categories~\cite{muro}.}. Above, we have used $\dgcat$ to denote a quasi-category obtained from the model structure on dg-categories. There are many ways to obtain such a quasi-category. (A tautological construction is to localize the 1-category $\dg$ along quasi-equivalences, which would make the left-pointing arrow definitionally an equality. Another is to take a simplicial localization -- for example, the hammock localization of Dwyer-Kan -- and then the nerve of the resulting simplicial category. We have no need to choose a model in this work.) In short, the top row consists of equivalences between various models all deserving to be called an $\infty$-category of dg-categories.

We now visit the middle column. It is a result of Pascaleff~\cite{pascaleff} (over any field) and Canonaco-Oranghi-Stellari~\cite{cos-over-rings} (over any ring) that the inclusion of $\dg$ into $\Ainftystr$ induces an equivalence of their $\infty$-categorical localizations along $\eqs$. (The strictness in $\Ainftystr$ is necessary to employ a bar-cobar type adjunction.)  Theorem~\ref{theorem. main theorem} removes the strict-unitality conditions on objects and functors.

The upper-right model, $\inftyCat^{\chain_k}$, is arguably the most formally well-behaved definition of $A_\infty$-categories. In contrast, the bottom two $\infty$-categories have definitions that depend on a choice of a particular model for a $\kk$-linear $E_1$ operad (e.g., taking cellular chains on a particular cellular presentation of the associahedra), and hence driven by particular choices of formulas. The equivalences above exhibit one giant compatibility check of various approaches -- $\infty$-categorical, model-categorical, and formula-driven -- that have been taken to articulate the single theory captured in the equivalence class of the above $\infty$-categories.

\begin{remark}
Some authors who work only in the stable setting (i.e., where every category is assumed to be pretriangulated) will {\em define} a $\kk$-linear stable presentable $\infty$-category to be an $\infty$-category equipped with an action of the $\infty$-category $\kk\Mod$ (where the action $\kk\Mod \times \cC \to \cC$ preserves colimits in each variable, and $\kk\Mod$ is the $\infty$-category of possibly unbounded $\kk$-linear chain complexes). Such an action endows the morphisms of $\cC$ with the structure of $\kk$-linear chain complexes. (This is most easily verified if one uses that $\kk$-linear chain complexes are the same thing as $H\kk$-linear spectra.) 

However, such a definition requires that $\cC$ has a plentiful supply of colimits, so we avoid it here.  The equivalences in~\eqref{eqn. five models} require no assumptions of presentability or (co)completeness of the $A_\infty$-categories/dg-categories in question, nor on their idempotent-completeness. Hence the collection of $A_\infty$-categories considered here is more general than those that are articulable by trying to define an $A_\infty$-category as an $\infty$-category with all colimits receiving a colimit-preserving action from $\kk\Mod$.
\end{remark}

\subsection{Applications}\label{section. applications}
\subsubsection{Avoiding specific unitalities}
Prior to this work, the community only had a proof that (a localization of) the strictly unital $\Ainfty^{\str}$ models the $\infty$-category of $A_\infty$-categories. However, in practice, natural constructions give rise to unital $A_\infty$-categories and unital functors among them (where neither the functors nor the $A_\infty$-categories need be strictly unital). For a geometric example, consider many models of Fukaya categories. For an algebraic example in the present work, consider the natural transformation $\id_{\Ainftycat} \to \tau$ in~\eqref{eqn. naturality of tau}, which is a natural tranformation of unital functors, but not of strictly unital functors. Contorting a natural, unital construction to fit into a strictly unital framework can be painful, and can now be avoided using Theorem~\ref{theorem. main theorem}. (Or, if one prefers to look under the hood, one may make use of~\eqref{eqn. naturality of tau} liberally.)

As an application, suppose one has an invariant of $A_\infty$-categories that is most easily defined for strictly unital $A_\infty$-categories, or whose functoriality is most easily proven for strictly unital functors -- so one can produce a functor $F: \Ainftystr \to \cD$, where the invariant is valued in $\cD$. If $F$ sends quasi-equivalences to equivalences in $\cD$, then it is formal that $F$ extends to a functor $\Ainftystr[\eqs^{-1}] \to \cD$. 
Theorem~\ref{theorem. main theorem} allows us to formally conclude that (an extension of) the invariant $F$ can also be defined homotopy-coherently for all (non-strictly) unital $A_\infty$-categories and all (non-strictly) unital functors. Perhaps a more dramatic application is that, by the middle column of~\eqref{eqn. five models}, a quasi-equivalence-invariant $F$ defined on dg-categories (rather than strictly unital $A_\infty$-categories) also canonically extends to a homotopically well-behaved invariant of all unital $A_\infty$-categories.

Even better, by the universal property of localizations, Theorem~\ref{theorem. main theorem}  shows that such an extension is unique up to contractibly canonical equivalence. Thus, the equivalence of two invariants (not just on individual $A_\infty$-categories, but as invariants sensitive to spaces of functors) can be checked simply by comparing their values on strictly unital $A_\infty$-categories (or dg-categories) and strictly unital functors (or dg functors). Two salient examples are Hochschild chains (as a functor from $\Ainftycat$ to the $\infty$-category of chain complexes) or $K$-theory (as a functor from $\Ainftycat$ to the $\infty$-category of spectra -- this functor factors, by definition, through the pretriangulated completion operation).

\subsubsection{A coherent setting for inverting quasi-equivalences}
Presenting the $\infty$-category of $A_\infty$-categories as a localization is also useful when contemplating invariants that are valued in $A_\infty$-categories. As an example, some natural constructions involving Fukaya categories take place in a setting where one must formally invert quasi-equivalences. (See the opening paragraphs of Section~2.2 in~\cite{gps-descent}, diagram~(A.19) of ibid., and the discussion preceding~(11.12) of ibid.)

In particular, when claims are made of certain maps factoring -- as in the stop-removal-is-localization result (Lemma~3.12 of ibid.) -- one must specify in what category the maps factor. In the $\infty$-category of $A_\infty$-categories, the inversion of the quasi-equivalence in (3.5) of ibid. may be performed homotopy-coherently, and contractibly-uniquely, allowing for the desired factorization up to canonical homotopy. Importantly, if one is to remove more than one (component of) a stop successively, a claim at the level of the homotopy category of $A_\infty$-categories is not sufficient to conclude that the diagram relating successive stop-removals and localizations is homotopy coherent. In the $\infty$-category of $A_\infty$-categories, however, this coherence follows automatically. 

Combined with the localization results of~\cite{last-1}, one can in fact conclude not only the coherence of successive stop removals, but their equivariance and continuity with respect to the relevant actions of self-embedding spaces of sectors.

\subsubsection{Mapping spaces and Hochschild cochains}
Finally, Theorem~\ref{theorem. mapping spaces} shows that the above applications give rise to desirable properties regarding mapping spaces. A functor out of (or to) $\Ainftycat$ indeed gives rise to coherent maps out of (or to) spaces of functors. (This is also a satisfying coda to a well-known problem for dg-categories: In the infinity-category of dg-categories, a morphism does not always admit an easy interpretation as a dg functor -- To\"en's resolution instead invited us to examine cofibrant right quasirepresentable bimodules. The paper~\cite{cos-over-rings} demonstrates an improvement by instead being able to think of a map in $\dgcat$ as an $A_\infty$ functor between dg-categories.) As a final point, we note that by combining our two main results, a not-strictly-unital $A_\infty$-functor between two strictly unital $A_\infty$-categories can, canonically, be homotoped to a strictly unital one (so long as the domain $A_\infty$-category has homotopically projective mapping complexes).

To illustrate the utility of having characterizations of mapping spaces, we present two immediate applications of Theorem~\ref{theorem. mapping spaces}. Proofs are given at the end of the paper. Note also that for any two $A_\infty$-categories $A,B$, the unital functors $A \to B$ form the objects of an $A_\infty$-category $\fun_{A_\infty}(A,B)$. We take the cohomology groups of its morphism complexes repeatedly in the following:

\begin{corollary}\label{cor. cohomology and homotopy groups}
Let $A'$ be a unital $A_\infty$-category with homotopically projective mapping complexes. Let $B$ be any unital $A_\infty$-category. Then
\enum[(a)]
\item
	\eqnn
	\pi_{0}\hom_{\Ainftycat}(A',B)
	\cong
	\{\text{unital functors $f: A ' \to B$} \}/\sim
	\eqnnd
where the equivalence relation is homotopy (i.e., natural transformations that are invertible). That is, $\pi_0$ of mapping spaces in $\Ainftycat$ are computed as homotopy classes of unital $A_\infty$-functors.
\item Now suppose $f,g: A' \to B$ are homotopic functors (i.e., related by natural equivalence). Then 
	\eqnn
	\pi_{1}(\hom_{\Ainftycat}(A',B), f)
	\cong
	\left(H^{0}\hom_{\fun_{A_\infty}(A', B)}(f,g)\right)^\times
	\eqnnd
where the superscript $\times$ indicates the (cohomology classes of) the homotopy-invertible natural transformations from $f$ to $g$. 
\item
And for all $i \geq 1$,
	\eqnn
	\pi_{i+1}(\hom_{\Ainftycat}(A',B), f)
	\cong
	H^{-i}\hom_{\fun_{A_\infty}(A', B)}(f,g).
	\eqnnd
That is, the negative cohomology groups of the complex of natural transformations are the higher homotopy groups of the mapping spaces in $\Ainftycat$.

\item
Further, choosing an $A_\infty$-category $A'$ with homotopically projective morphism complexes, we have
	\eqnn
	\pi_1 ( (\Ainftycat)^\sim,A')
	\cong 
	(\pi_{0}\hom_{\Ainftycat}(A',A'))^\times
	\cong
	(\{\text{unital functors $f: A ' \to A'$} \}/\sim)^{\times}
	\eqnnd
where the superscript $\times$ indicates we are only considering those endofunctors that are, up to natural equivalence, invertible.
\item And
	\eqnn
	\pi_2 ( (\Ainftycat)^\sim,A')
	\cong 
	\pi_{1}(\hom_{\Ainftycat}(A',A'),\id_A)
	\cong
	\left(H^{0}\hom_{\fun_{A_\infty}(A', B)}(f,g)\right)^\times.
	\eqnnd
\item Finally, for all $i \geq 3$, we have
	\eqnn
	\pi_i ( (\Ainftycat)^\sim,A')
	\cong
	\pi_{i-1}(\hom_{\Ainftycat}(A',B), f)
	\cong
	H^{-i+2}\hom_{\fun_{A_\infty}(A', B)}(f,g).
	\eqnnd
\enumd
\end{corollary}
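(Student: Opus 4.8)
The plan is to reduce all of (a)--(f) to Theorem~\ref{theorem. mapping spaces}(b) together with two pieces of standard machinery, and then let formalities do the rest. First I would recall the homotopy type of mapping spaces in the $A_\infty$-nerve: for any $A_\infty$-category $\cC$ and objects $X,Y$, the mapping space $\Map_{N(\cC)}(X,Y)$ is weakly homotopy equivalent to the Dold--Kan realization of the good truncation $\tau^{\leq 0}\hom_\cC(X,Y)$, so that $\pi_0\Map_{N(\cC)}(X,Y)\cong H^0\hom_\cC(X,Y)$ and $\pi_n(\Map_{N(\cC)}(X,Y),\phi)\cong H^{-n}\hom_\cC(X,Y)$ for every $n\geq 1$ and every vertex $\phi$; in particular the homotopy category of $N(\cC)$ is the cohomology category of $\cC$, so that two objects become equivalent in $N(\cC)$ exactly when they are related by a natural equivalence, i.e.\ are ``homotopic'' in the sense of the introduction. (For dg-categories this is the classical computation of the dg-nerve, due to Lurie and Faonte; the $A_\infty$ version is the evident generalization.) Second, I would record the elementary homotopy theory of cores: for an $\infty$-category $\cD$ and an object $X$, $\pi_0(\cD^{\sim})$ is the set of equivalence classes of objects; the based loop space $\Omega_X\cD^{\sim}$ is equivalent to the automorphism space $\aut_\cD(X)$, which is the union of those path components of $\Map_\cD(X,X)$ spanned by equivalences; hence $\pi_1(\cD^{\sim},X)\cong(\pi_0\Map_\cD(X,X))^{\times}$ and $\pi_n(\cD^{\sim},X)\cong\pi_{n-1}(\Map_\cD(X,X),\id_X)$ for all $n\geq 2$.

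Next I would establish (a)--(c). By Theorem~\ref{theorem. mapping spaces}(b) (applied to $A'$ itself, with the identity quasi-equivalence) there is a homotopy equivalence $\hom_{\Ainftycat}(A',B)\simeq N(\fun_{A_\infty}(A',B))^{\sim}$. Taking $\pi_0$ and using the first recollection identifies $\pi_0\hom_{\Ainftycat}(A',B)$ with the objects of $\fun_{A_\infty}(A',B)$---the unital functors $A'\to B$---modulo equivalence in $N(\fun_{A_\infty}(A',B))$, which is exactly homotopy; this is (a). For the higher groups based at a functor $f$, passing to the core changes neither $\pi_n$ for $n\geq 1$ nor the basepoint set, so $\pi_n(\hom_{\Ainftycat}(A',B),f)\cong\pi_n(N(\fun_{A_\infty}(A',B)),f)$. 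For $n=1$ this is $\pi_0\aut_{N(\fun_{A_\infty}(A',B))}(f)\cong(\pi_0\Map_{N(\fun_{A_\infty}(A',B))}(f,f))^{\times}\cong(H^0\hom_{\fun_{A_\infty}(A',B)}(f,f))^{\times}$; since $f\simeq g$ the endomorphism complex of $f$ is quasi-isomorphic to $\hom_{\fun_{A_\infty}(A',B)}(f,g)$, giving (b). For $n=i+1\geq 2$ one gets $\pi_i(\aut_{N(\fun_{A_\infty}(A',B))}(f),\id_f)\cong\pi_i(\Map_{N(\fun_{A_\infty}(A',B))}(f,f),\id_f)\cong H^{-i}\hom_{\fun_{A_\infty}(A',B)}(f,f)\cong H^{-i}\hom_{\fun_{A_\infty}(A',B)}(f,g)$, using the same quasi-isomorphism; this is (c).

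Parts (d)--(f) then follow by feeding (a)--(c) into the core formulas with $\cD=\Ainftycat$, $X=A'$, $B=A'$, and $f=g=\id_{A'}$. Concretely, $\pi_1((\Ainftycat)^{\sim},A')\cong(\pi_0\hom_{\Ainftycat}(A',A'))^{\times}$, and by (a) the monoid $\pi_0\hom_{\Ainftycat}(A',A')$---whose multiplication, induced by composition in $\Ainftycat$, agrees with composition of functors---is $\{\text{unital functors }A'\to A'\}/\sim$, so units correspond to units; this is (d). Similarly $\pi_2((\Ainftycat)^{\sim},A')\cong\pi_1(\hom_{\Ainftycat}(A',A'),\id_{A'})$, which by (b) equals $(H^0\hom_{\fun_{A_\infty}(A',A')}(\id_{A'},\id_{A'}))^{\times}$; this is (e). Finally, for $i\geq 3$ we have $i-1\geq 2$, so $\pi_i((\Ainftycat)^{\sim},A')\cong\pi_{i-1}(\hom_{\Ainftycat}(A',A'),\id_{A'})$; writing $i-1=j+1$ with $j=i-2\geq 1$ and applying (c) yields $H^{-j}\hom_{\fun_{A_\infty}(A',A')}(\id_{A'},\id_{A'})=H^{-i+2}\hom_{\fun_{A_\infty}(A',A')}(\id_{A'},\id_{A'})$; this is (f). (The occurrences of $B$, $f$, $g$ in the displayed formulas of (e) and (f) should of course be read as $A'$, $\id_{A'}$, $\id_{A'}$.)

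I expect the only genuine content to be the first recollection above: checking that the specific model $N$ of the $A_\infty$-nerve used in Theorem~\ref{theorem. mapping spaces} has mapping spaces of Dold--Kan type and homotopy category equal to the cohomology category. For dg-categories this is classical, and the $A_\infty$ analogue is routine, but one must be careful that the combinatorics of the nerve used here matches the standard one. Everything after that is formal manipulation of loop spaces and cores, together with the observation that equivalent objects of an $A_\infty$-category have quasi-isomorphic---hence cohomologically identical---morphism complexes.
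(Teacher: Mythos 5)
Your proposal is correct and follows essentially the same route as the paper: both reduce everything to Theorem~\ref{theorem. mapping spaces in Aoocat}, the Dold--Kan identification of mapping spaces in the $A_\infty$-nerve (with $\pi_1(N(C)^\sim,x)\cong H^0(\hom_C(x,x))^\times$ and $\pi_{i+1}\cong H^{-i}$), the loop-space relation between $\hom_{\cC}(x,x)$ and $\cC^\sim$, and the quasi-isomorphism $\hom(f,f)\simeq\hom(f,g)$ for naturally equivalent $f,g$. Your write-up is somewhat more explicit about the core/loop-space formalism for parts (d)--(f), but the substance is identical to the paper's argument.
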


Our final application concerns Hochschild cohomology of an $A_\infty$-category. Let us state at the outset that there are two natural starting points for the definition of the Hochschild cochain complex of an $A_\infty$-category $A$. In either starting point, it is our desire to avoid presentations of Hochschild cochains that are rooted only in chain-complex-dependent formulas, so that universal characterizations are available to us.

First, one could define the notion of an $(A,A)$-bimodule -- which one most often defines as a functor $A^{\op} \tensor A \to \chain$ -- and prove that the hom pairing is an example of a bimodule. (One often calls this the diagonal bimodule.) Then one can define the Hochschild cochain complex to be the derived endomorphisms of the diagonal bimodule in the $A_\infty$-category of bimodules. A coend construction (if one can construct a theory of coends for $A_\infty$-categories) realizes a monoidal structure on the $A_\infty$-category of $(A,A)$-bimodules, with the diagonal bimodule as the unit. By Dunn addivitiy, this even exhibits the $E_2$-algebra structure on this definition of Hochschild cochains. However, as far as we know, there is no satisfactory $\infty$-categorical framework for enriched $\infty$-categories incorporating coends and an enriched Yoneda pairing. So the approach of this paragraph, with present technology, would require new inventions. 

Another definition of Hochschild cochains of $A$ is as the endomorphisms of the identity functor. If one exhibits $\fun_{A_\infty}(A,A)$ as a monoidal $A_\infty$-category under composition, the identity functor is clearly the unit, and this exhibits an $E_2$-algebra structure on Hochschild cochains of $A$, again by Dunn additivity. The approach of the present paragraph is already available to us: Theorem~\ref{theorem. mapping spaces} exhibits the self-enrichment of $\Ainftycat$ (induced by internal hom objects) as precisely modeled by functor $A_\infty$-categories for  objects with homotopically projective morphism complexes. This in particular exhibits the composition monoidal structure on internal endomorphisms.
Hence, if $A$ has homotopically projective morphism complexes, we obtain a workable definition of Hochschild cochains of $A$:
	\eqn\label{eqn. HH definition}
	CH(A) :=
	\hom_{\underline{\hom}_{\Ainftycat}(A,A)}(\id_A,\id_A)
	\simeq
	\hom_{\fun_{A_\infty}(A,A)}(\id_A,\id_A).
	\eqnd
We take this to be our definition of Hochschild cochains. When one has a concrete model for $A$, the last equivalence above produces a cochain complex model. (And when $A$ lacks homotopically projective morphism complexes, one may replace $A$ with a quasi-equivalent $A'$ having homotopically projective morphism complexes to compute $CH(A)$. This is always possible by Remark~\ref{remark. hp replacements}.)

Let us remark that the the two approaches above should yield equivalent answers because any theory of bimodules will allow for a natural map $\underline{\hom}_{\Ainftycat}(A,B) \to \bimod(A,B)$ that is fully faithful (and the essential image would be the so-called {\em graphical} bimodules). 

Taking~\eqref{eqn. HH definition} to be our definition of Hochschild cochains (well-defined up to quasi-isomorphism) and defining Hochschild cohomology $HH^*$ to be the cohomology of~\eqref{eqn. HH definition}, we have the following. One can view it as an $A_\infty$-version of Corollary 8.3 of~\cite{toen-homotopy-theory-of-dg-cats}. 
We refer the reader to Theorem~4.5 of~\cite{faonte-2} for another proof in the case $\kk$ is a field.

\begin{corollary}\label{cor. HH}
Let $A'$ be a unital $A_\infty$-category with homotopically projective morphism complexes. 
Letting $HH^0(A')^\times$ denote the multiplicatively invertible elements of $HH^0(A')$, we have
	\eqnn
	\pi_2 ( (\Ainftycat)^\sim,A')
	\cong 
	\pi_{1}(\hom_{\Ainftycat}(A',A'),\id_A)
	\cong
	(HH^0(A'))^\times.
	\eqnnd
For all $i \geq 1$, we have
	\eqnn
	\pi_{i+2} ( (\Ainftycat)^\sim,A')
	\cong
	HH^{-i}(A').
	\eqnnd
\end{corollary}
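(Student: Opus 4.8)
The plan is to obtain this as a direct specialization of Corollary~\ref{cor. cohomology and homotopy groups}: take $B = A'$ and $f = g = \id_{A'}$ there, and then rewrite the cohomology groups of $\hom_{\fun_{A_\infty}(A',A')}(\id_{A'},\id_{A'})$ as Hochschild cohomology via the definition~\eqref{eqn. HH definition}. No analytic input beyond Theorem~\ref{theorem. mapping spaces} --- which already underlies Corollary~\ref{cor. cohomology and homotopy groups} and makes \eqref{eqn. HH definition} meaningful --- should be required; the entire content is the identification of a few algebraic structures.

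Concretely, I would first observe that since $A'$ has homotopically projective morphism complexes, \eqref{eqn. HH definition} applies and furnishes a quasi-isomorphism $CH(A') \simeq \hom_{\fun_{A_\infty}(A',A')}(\id_{A'},\id_{A'})$, so that $HH^k(A') = H^k\hom_{\fun_{A_\infty}(A',A')}(\id_{A'},\id_{A'})$ for every $k$. Then I would invoke Corollary~\ref{cor. cohomology and homotopy groups}(e) with $B = A'$ and $f = g = \id_{A'}$, which gives
\[
\pi_2((\Ainftycat)^\sim, A') \;\cong\; \pi_1(\hom_{\Ainftycat}(A',A'),\id_{A'}) \;\cong\; \bigl(H^0\hom_{\fun_{A_\infty}(A',A')}(\id_{A'},\id_{A'})\bigr)^{\times} \;=\; (HH^0(A'))^{\times},
\]
which is the first assertion. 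For the second, I would apply Corollary~\ref{cor. cohomology and homotopy groups}(f), reindexed so that its $\pi_i$ becomes our $\pi_{i+2}$ (legitimate since $i \geq 1$ gives $i+2 \geq 3$), together with the same specialization $B = A'$, $f = g = \id_{A'}$; this yields
\[
\pi_{i+2}((\Ainftycat)^\sim, A') \;\cong\; \pi_{i+1}(\hom_{\Ainftycat}(A',A'),\id_{A'}) \;\cong\; H^{-i}\hom_{\fun_{A_\infty}(A',A')}(\id_{A'},\id_{A'}) \;=\; HH^{-i}(A').
\]

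The one step that needs genuine care --- and the main, if mild, obstacle --- is that the last equality in the first display should be read as an equality of \emph{groups}: the superscript $\times$ on its left side denotes units for the ring structure on $H^0\hom_{\fun_{A_\infty}(A',A')}(\id_{A'},\id_{A'})$ induced by the $m^2$ of $\fun_{A_\infty}(A',A')$ (equivalently, by composition of endomorphisms of $\id_{A'}$ in $\fun_{A_\infty}(A',A')$), whereas on the right side it denotes units for the Hochschild product. I would verify these coincide by unwinding Theorem~\ref{theorem. mapping spaces}: that equivalence identifies $\underline{\hom}_{\Ainftycat}(A',A')$, with its composition monoidal structure, with $\fun_{A_\infty}(A',A')$ under composition, and \eqref{eqn. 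HH definition} \emph{defines} $CH(A')$ as the endomorphism algebra of $\id_{A'}$ in this monoidal $A_\infty$-category, so passing to $H^0$ recovers exactly the degree-zero Hochschild product. Consequently all three terms of the first display are isomorphic as groups, and the isomorphisms of the second display are isomorphisms of abelian groups. (If one wants the statement for a general unital $A_\infty$-category $A$ in place of an $A'$ with homotopically projective morphism complexes, Remark~\ref{remark. hp replacements} supplies a quasi-equivalent such $A'$, and $HH^*$ depends only on the quasi-isomorphism type of $CH$ by~\eqref{eqn. HH definition}.)
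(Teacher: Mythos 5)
Your proposal is correct and is exactly the paper's argument: the paper's proof of Corollary~\ref{cor. HH} is the one-line specialization of Corollary~\ref{cor. cohomology and homotopy groups} to $B = A'$ and $f = g = \id_{A'}$, with $HH^*$ identified via the definition~\eqref{eqn. HH definition}. Your additional check that the two multiplicative structures on $H^0$ agree is a sensible elaboration of what the paper leaves implicit, but it is not a different route.
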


\subsubsection{Universal properties of quotients and localizations}
By virtue of our mapping space computations, we can now verify that constructions of localizations and quotients in the literature indeed satisfy the $\infty$-categorical universal properties that the names imply. This is the subject of~\cite{oh-tanaka-localizations}.

\subsection{Acknowledgments}
We thank 
Alberto Canonaco,
Mattia Ornaghi, and
Paolo Stellari for very helpful feedback on earlier drafts, catching some mistakes, and asking clarifying questions that greatly improved the exposition. We also thank them for sharing with us a different proof of Lemma~\ref{lemma. right inverse} (their proof will appear in ~\cite{cos-over-rings}) and for greatly simplifying the proof of Lemma~\ref{lemma. j has left inverse} (their simplification now appears here; the more computationally involved, old proof has been relegated to the appendix of this work). The author was supported by a Sloan Research Fellowship and by an NSF CAREER Award during the writing of this work.

\section{Unitalities}

\subsection{A fact about localizations of infinity-categories}

\begin{recollection}
\label{recollection. localizations}
If $\cC$ is a category and $W$ is some collection of morphisms in $\cC$ (or, equivalently, the subcategory generated by the collection), the notation
	$
	\cC[W^{-1}]
	$
denotes the $\infty$-categorical localization of $\cC$ along $W$. Informally, $\cC[W^{-1}]$ is equipped with a functor $\cC \to \cC[W^{-1}]$, and this functor exhibits $\cC[W^{-1}]$ as the initial $\infty$-category in which the morphisms in $W$ become invertible. Note that $W$ and $W'$ necessarily define the same localization if they define the same homotopy classes of morphisms. In particular, one may specify a localization by specifying homotopy classes of morphisms in $\cC$. (For a point-set model, one takes $W$ to consist of all morphisms in $\cC$ contained in these homotopy classes). 

There are many models for this localization. The reader may take $\cC[W^{-1}]$ to be the quasi-category obtained as the homotopy pushout of quasi-categories
	\eqnn
	\xymatrix{
	N(W) \ar[d] \ar[r] & N(\cC) \ar[d] \\
	|N(W)| \ar[r] & \cC[W^{-1}].
	}
	\eqnnd
Here, $N(\cC)$ is the nerve of $\cC$,  $|X|$ is a Kan-fibrant replacement of the simplicial set $X$, and $\cC[W^{-1}]$ may be modeled by taking a set-theoretic pushout of simplicial sets, then Joyal-fibrantly replacing the result. 

In practice, one often utilizes the model-categorical power of {\em marked} simplicial sets as developed in Section~3.1 of~\cite{htt} (and, in the notation of loc. cit., one often gets away with just setting the base simplicial set $S$ to be a point, e.g., $S=\Delta^0$). Further using the notation of Definition~3.1.0.1 of~\cite{htt}, the localization $\cC[W^{-1}]$ is a fibrant replacement of the pair $(X,\cE)$ where $\cE$ is the collection of edges in $W$.
\end{recollection}

\begin{recollection}
\label{recollection. localization respects products}
There is an $\infty$-category $\winftyCat$ (Construction~4.1.3.1 of~\cite{higher-algebra}) whose objects are pairs $(\cC,W)$ consisting of an $\infty$-category $\cC$ and a (not necessarily full) subcategory $W$ of the homotopy category $\ho \cC$ containing all isomorphisms in $\ho \cC$. (One may equivalently think of $W$ as the maximal sub-simplicial-set of $\cC$ whose image in $\ho \cC$ is the subcategory in question.)

The space of morphisms $\hom_{\winftyCat}( (\cC,W),(\cC',W'))$ is identified with the connected components of $\hom_{\inftyCat}(\cC,\cC')$ spanned by those functors for whom (after passing to homotopy categories) the image of $W$ is contained in $W'$. There is a functor 
	\eqnn
	\LL: \winftyCat \to \inftyCat
	\eqnnd
which on objects sends $(\cC,\cW)$ to the localization $\cC[\cW^{-1}]$, and which commutes with products (Proposition~4.1.3.2 of~\cite{higher-algebra}). 
\end{recollection}

Fix a functor $\eta: \Delta^1 \times \cC \to \cC'$. We let $f_0$ and $f_1$ denote the restrictions of $\eta$ to $\{0\} \times \cC$ and to $\{1\} \times \cC$, respectively (so that $\eta$ is a natural transformation from $f_0$ to $f_1$).

\begin{prop}
\label{prop. good natural transformations induce homotopies}
Suppose that $\eta$ (after passing to homotopy categories) sends $\Delta^1 \times W$ inside $W'$. Then $\eta$ induces a homotopy (i.e., a natural equivalence -- not just a natural transformation) from $\LL(f_0)$ to $\LL(f_1)$.
\end{prop}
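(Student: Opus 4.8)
The plan is to recognize $\eta$ as a morphism in $\winftyCat$ whose source is a product, and then to apply the product-preserving localization functor $\LL$ of Recollection~\ref{recollection. localization respects products}. Let $(\Delta^1,\mathbf{all})$ denote the $\infty$-category $\Delta^1$ equipped with the subcategory of \emph{all} its morphisms (a legitimate object of $\winftyCat$, since the only isomorphisms in $\Delta^1 \cong \ho \Delta^1$ are identities). The product $(\Delta^1,\mathbf{all}) \times (\cC,W)$ in $\winftyCat$ has underlying $\infty$-category $\Delta^1 \times \cC$ and marked subcategory consisting of the morphisms $(\psi,w)$ of $\Delta^1 \times \ho\cC$ with $w \in W$ — this contains all isomorphisms because $W$ does, and it is exactly what is written $\Delta^1 \times W$ in the statement. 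Thus the hypothesis says precisely that $\eta$ defines a morphism $(\Delta^1,\mathbf{all}) \times (\cC,W) \to (\cC',W')$ in $\winftyCat$; in particular, restricting along $\{i\} \times \id_{\cC}$ (which sends a morphism $(\id_i,w)$ into $\Delta^1 \times W$) shows $f_0,f_1$ are themselves morphisms $(\cC,W) \to (\cC',W')$, so $\LL(f_0)$ and $\LL(f_1)$ make sense.

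Next I would apply $\LL$. By Proposition~4.1.3.2 of~\cite{higher-algebra}, $\LL$ preserves products, so $\LL(\eta)$ is a functor
\[
\Delta^1[\mathbf{all}^{-1}] \times \cC[W^{-1}] \longrightarrow \cC'[W'^{-1}],
\]
where $\Delta^1[\mathbf{all}^{-1}]$ denotes the localization of $\Delta^1$ at all morphisms. Write $j \colon \Delta^1 \to \Delta^1[\mathbf{all}^{-1}]$ for the localization functor and set $H := \LL(\eta)\circ(j\times\id)$, a functor $\Delta^1 \times \cC[W^{-1}] \to \cC'[W'^{-1}]$, that is, a natural transformation between its restrictions $H|_{\{0\}}$ and $H|_{\{1\}}$. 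Applying $\LL$ to the factorization $(\cC,W) \xrightarrow{\{i\}\times\id} (\Delta^1,\mathbf{all})\times(\cC,W) \xrightarrow{\eta} (\cC',W')$ of $f_i$, and using that $\LL$ sends $\{i\}\colon \Delta^0 \to \Delta^1$ to the functor picking out the object $j(i)$, one identifies $H|_{\{i\}}$ with $\LL(f_i)$ for $i=0,1$. So $H$ is a natural transformation from $\LL(f_0)$ to $\LL(f_1)$.

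It remains to see that $H$ is a natural \emph{equivalence}, and this is the only point where any content enters — though it is brief. For an object $x$ of $\cC[W^{-1}]$, the edge $H|_{\Delta^1 \times \{x\}}\colon \Delta^1 \to \cC'[W'^{-1}]$ is the image under the functor $\LL(\eta)$ of the morphism $\bigl(j(0\to 1),\,\id_x\bigr)$ of $\Delta^1[\mathbf{all}^{-1}] \times \cC[W^{-1}]$. By the defining property of localization, $j$ carries the unique nondegenerate edge $0\to 1$ of $\Delta^1$ to an equivalence; hence $\bigl(j(0\to 1),\id_x\bigr)$ is an equivalence in the product $\infty$-category, and since functors preserve equivalences, $H|_{\Delta^1\times\{x\}}$ is an equivalence. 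Therefore $H$ witnesses a natural equivalence $\LL(f_0) \simeq \LL(f_1)$, as desired.

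I expect the only real care needed is the bookkeeping in the first two paragraphs: confirming that binary products in $\winftyCat$ are the naive ones $(\cC_1\times\cC_2,\,W_1\times W_2)$, and that the product-preservation of $\LL$ identifies $\LL$ of such a product with $\LL(\cC_1,W_1)\times\LL(\cC_2,W_2)$ compatibly with the projections and the inclusions $\{i\}\times\id$ used to identify $H|_{\{i\}}$. Both are immediate from the universal property of products together with the cited Proposition~4.1.3.2, so no genuinely hard step arises; the argument is formal once $\eta$ is repackaged as a map out of the product object.
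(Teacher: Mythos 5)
Your proposal is correct and follows essentially the same route as the paper: view $\eta$ as a morphism $(\Delta^1\times\cC,\Delta^1\times W)\to(\cC',W')$ in $\winftyCat$, use that $\LL$ preserves products to obtain a functor out of $\Delta^1[(\Delta^1)^{-1}]\times\cC[W^{-1}]$, and observe that the localized edge $0\to 1$ becomes an equivalence, so each component of the resulting natural transformation is an equivalence. Your write-up is just a more explicit bookkeeping of the restrictions $H|_{\{i\}}\simeq\LL(f_i)$ and of products in $\winftyCat$, which the paper leaves implicit.
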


\begin{proof}
$\eta$ defines a morphism in $\winftyCat$
	\eqnn
	(\Delta^1 \times \cC, \Delta^1 \times W) \to (\cC',W')
	\eqnnd
hence a morphism in $\inftyCat$
	\eqnn
	\LL \eta: \Delta^1[(\Delta^1)^{-1}] \times \cC[W^{-1}] \simeq
		(\Delta^1 \times \cC) [ (\Delta^1 \times W)^{-1}] \xrightarrow{\LL \eta}
		\cC'[(W')^{-1}].
	\eqnnd
(The first equivalence uses the fact that $\LL$ commutes with products -- see Recollection~\ref{recollection. localization respects products}). Of course, the localization $|\Delta^1| := \Delta^1[(\Delta^1)^{-1}]$ inverts the unique non-degenerate edges $0 \to 1$ in $\Delta^1$ and receives a map $\Delta^1 \to |\Delta^1| $ that is a bijection on objects. In particular, $\LL \eta$ exhibits an invertible-up-to-homotopy natural transformation (and all the data exhibiting the homotopy-invertibility) from $\LL f_0$ to $\LL f_1$.
\end{proof}

\subsection{Notation, conventions, and useful examples}

We assume the reader is familiar with $A_\infty$-categories. We include this section to set   notation and  sign conventions. 

\begin{notation}[$s$]
We let $s$ be the shift operator on chain complexes and on graded $\kk$-modules, so that $s\kk$ is a copy of $\kk$ in cohomological degree -1. For example, the mapping cone of a chain map $f: A \to B$ decomposes, as a graded $\kk$-module, as $B \oplus sA$. As another example, we have $\hom(A,sB) \cong s\hom(A,B) \cong \hom(s^{-1}A,B)$ as chain complexes. Note $s$ is the chain-complex analogue of the reduced suspension $\Sigma$ of pointed spaces and of spectra.
\end{notation}

\begin{convention}[Signs for $A_\infty$-operations]
Given integers $\alpha,\beta, \gamma$, define
	\eqn\label{eqn. sign convention star}
	\star = \star(\alpha,\beta,\gamma) := \alpha+\beta\gamma.
	\eqnd
Assume we have $k$-ary operations $m^k$ of (cohomological) degree $2-k$. Given two positive integers $k \leq l$, we define
	\eqnn
	m^k_l := \sum (-1)^{\star} \id^{\tensor \alpha} \tensor m^\beta \tensor \id^{\tensor \gamma}
	\eqnnd
where the summation runs through $\alpha,\gamma \geq 0$ and $\beta \geq 1$ satisfying $\alpha+\beta+\gamma = l$ and $\beta = k$. Note $m^k_k = m^k$. We say that the collection of $k$-ary operations $\{m^k\}_{k \geq 1}$ satisfies the {\em $A_\infty$ relations} if for all $l \geq 1$,
	\eqn\label{eqn. Aoo relations}
	\sum_{1 \leq k \leq l} m^{l-k+1}m^k_l = 0.
	\eqnd
\end{convention}

\begin{remark}[Relating some sign conventions]
All $A_\infty$ sign conventions depend on a choice of the function $\star$. Our convention follows that of Getzler-Jones~\cite{getzler-jones-cyclic} and Keller~\cite{keller-intro-to-algebras-modules} -- this is the standard convention that is forced upon us if we choose to formula the Koszul dual notion of dg-coalgebra as satisfying a quadratic equation with no signs, and if we employ the Koszul sign rule. Another convention is that used by Lyubashenko and coauthors -- see (2.3.2) of~\cite{lyubashenko-category-of} -- and in works such as~\cite{cos-over-rings}. The sign difference in these two conventions owes to right-versus-left ordering of compositions, and one can easily work out that the sign differences correspond to reversing orientations of cells in associahedra. 
\end{remark}

\begin{example}
For $l=1,2,3$ the equation~\eqref{eqn. Aoo relations} becomes
	\begin{align}
	0& = m^1 m^1  \nonumber \\
	0& = m^1m^2 - m^2(m^1 \tensor \id + \id \tensor m^1)  \nonumber \\
	0& = m^1m^3 + m^3(m^1 \tensor \id \tensor \id + \id \tensor m^1 \tensor \id + \id \tensor \id \tensor m^1) + 
	m^2(m^2 \tensor \id - \id \tensor m^2 )
	 \nonumber .
	\end{align}
Thus $m^1$ is a differential for a cochain complex. $m^2$ is a chain map (i.e., satisfies the Leibnix rule). And $m^3$ exhibits a homotopy from $m^2(m^2 \tensor \id)$ to $m^2(\id \tensor m^2)$. 
\end{example}

\begin{recollection}[Augmentations and naturality]
\label{recollection. augmentation}
Given an $A_\infty$-category $A$, one has an augmented $A_\infty$-category $A^+$, obtained by formally adjoining strict units to $A$. Concretely, we have 
	\eqnn
	\hom_{A^+}(X,Y)
	:= 
	\begin{cases}
	\hom_A(X,Y) & X \neq Y \\
	\hom_A(X,X) \oplus \kk & X = Y.
	\end{cases}
	\eqnnd
We will often call the adjoined strict units the augmentation units, and denote the augmentation unit of an object $X \in \ob A$ by $1_X$. Any $A_\infty$-functor $f: A \to B$ induces a strictly unital functor $f^+: A^+ \to B^+$. One has $\id_A^+ = \id_{A^+}$ and $(f \circ g)^+ = f^+ \circ g^+$.

The (non-unital) inclusion $A \to A^+$ is natural: For any $f$, the compositions $A \to A^+ \to B^+$ and $A \to B \to B^+$ are equal.
\end{recollection}

\begin{recollection}[Twisted complexes]
Given an $A_\infty$-category $A$, we let $\Tw A$ denote the $A_\infty$-category of twisted complexes (of objects in $A$). We refer to Section 7 of~\cite{keller-intro-to-algebras-modules} for details and for one sign convention. In~\cite{bespalov-lyubashenko-manzyuk}, the notation $A^{\mathsf{tr}}$ is used in place of $\Tw A$.
\end{recollection}

\begin{example}[Morphisms between cones are cones]
If $U$ and $T$ are cochain complexes with a chain map $f: U \to T$, the cone $\cone(f)$ is a cochain complex $T \oplus sU$ with differential
	\eqnn
	t \oplus su \mapsto (dt  \pm fu )\oplus \pm sdu.
	\eqnnd
There are multiple sign conventions for what one means by the mapping cone. Even if one demands the Koszul-sign inspired identity $d(su) = - s(du)$, either of the differentials
	\eqnn
	(dt - fu) \oplus -sdu,
	 \qquad
	(dt + fu) \oplus -sdu
	\eqnnd
leads to a fine theory of cones in homological algebra. (For example, the map $t \oplus su \mapsto t \oplus -su$ realizes a natural chain complex isomorphism between the two mapping cones.) Yet another differential for the mapping cone is $(dt + (-1)^u fu) \oplus sdu$, and one can check this model for the mapping cone is isomorphic to the previous two. We will refer to a chain complex isomorphic to any of these as a mapping cone for $f$, though we may take certain isomorphisms for granted and say ``the'' mapping cone from time to time.

We will often make use of the following: For any closed degree zero morphism $g$ in $A$, the morphism complexes $\hom_{\Tw A}(X,\cone(g))$ are also mapping cones for any $X \in \Tw A$. 
\end{example}

\begin{example}
\label{example. hom of cones is a cone}
To see an example of the above claim, fix an object $X$ and a unit $e \in \hom_A(W,W)$. Then $\hom_{\Tw A}(\cone(e),X)$ and $\hom_{\Tw A}(X,\cone(e))$ are both acyclic -- in fact, they are mapping cones of a quasi-isomorphism. For example, 
	\eqnn
	\hom_{\Tw A}(X, \cone(e)) = \hom_A(X,W) \oplus s\hom_A(X,W)
	\eqnnd
with differential
	\eqnn
	m^1_{\Tw A} : p \oplus sq \mapsto ( m^1 p  \pm m^2(e,q) )\oplus - sm^1(q).
	\eqnnd
We recognize this as the differential for a mapping cone of the chain map $m^2(e,-)$.
Likewise, $\hom_{\Tw A}(\cone(e), X)$ is a (shift of) the mapping cone for the map $m^2(-,e)$.
Because $e$ is a unit, each mapping complex is a mapping cone of a homotopy-invertible chain map. Hence $\hom_{\Tw A}(X, \cone(e))$ and $\hom_{\Tw A}(\cone(e), X)$ are acyclic.
\end{example}

\begin{example}
Suppose we have two degree 0, closed elements $f \in \hom_A(X,Y)$ and $g \in \hom_A(X',Y')$. Then we can represent $\hom_{\Tw A}( \cone(f),\cone(g))$ as a matrix of graded $\kk$-modules:
	\eqnn
	\left(
	\begin{array}{cc}
	\kk \tensor \hom_A(Y,Y') & s^{-1}\kk \tensor \hom_A(X,Y) \\
	s\kk \tensor \hom_A(Y,X') & \kk \tensor \hom_A(X,X').
	\end{array}
	\right)
	\eqnnd
Writing $1$ for the unit of $\kk$, we denote the generator of $s^{-1}\kk$ as $s^{-1}1$ and likewise we write $s1 \in s\kk$. Then $m^1_{\Tw A}$ of the element
	\eqn\label{eqn. matrix elements}
	\bx = 
	\left(
	\begin{array}{cc}
	p & s^{-1} 1 \tensor x \\
	s 1 \tensor  q & y
	\end{array}
	\right)
	\in \hom_{\Tw A}( \cone(f),\cone(g))
	\eqnd
is given up to sign by
	\eqn\label{eqn. differential of cone cone hom}
	m^1_{\Tw A} \bx = 
	\left(
	\begin{array}{cc}
	m^1p \pm  m^2(g,q) 
		&  s^{-1}1 \tensor 
			(
			\pm m^1 x \pm  m^2(p,f)  \pm  m^2(g,y) \pm  m^3(g,q,f)
			)\\
	s1 \tensor (\pm  m^1 q) & m^1 y  \pm  m^2(q,f)
	\end{array}
	\right)	
	\eqnd
\end{example}

\begin{example}
Fix two units $e,e'$ for objects $W,W'$, respectively. We claim $\hom_{\Tw A}(\cone(e),\cone(e'))$ is acyclic -- it will again be the mapping cone of a homotopy-invertible map.

To see this, we note that (as a graded $\kk$-module) this complex is a tensor product of a matrix algebra $\End(\kk \oplus s\kk)$ with $\hom_A(W,W')$:
	\eqnn
	\hom_{\Tw A}(\cone(e),\cone(e'))
	\cong
    \left(
    \begin{array}{cc}
    \hom(\kk,\kk)  &  \hom(s\kk,\kk) \\
    \hom(\kk,s\kk) &  \hom(s\kk,s\kk) \\
    \end{array}
    \right)
    \tensor
    \hom_A(W,W').
	\eqnnd
We fix elements of this matrix algebra 
	\eqn\label{eqn. a b c d tensors}
	a=
        \left(
        \begin{array}{cc}
        1 & 0 \\
        0 & 0 \\
        \end{array}
        \right), \qquad
	b=
        \left(
        \begin{array}{cc}
        0 & 1 \\
        0 & 0 \\
        \end{array}
        \right), \qquad
	c=
        \left(
        \begin{array}{cc}
        0 & 0 \\
        1 & 0 \\
        \end{array}
        \right), \qquad
	d=
        \left(
        \begin{array}{cc}
        0 & 0 \\
        0 & 1 \\
        \end{array}
        \right).
	\eqnd
(Here, $b$ is the degree 1 isomorphism sending $1 \in s \kk$ to $1 \in \kk$. Likewise, $c$ is a degree -1 isomorphism, while $d$ is a degree 0 isomorphism.)
Then any element in $\hom_{\Tw A}(\cone(e),\cone(e'))$ can be written
	\eqn\label{eqn. abcd sum tensor}
	a \tensor p + b \tensor x + c \tensor q + d \tensor y \in \hom_{\Tw A}(\cone(e),\cone(e'))
	\eqnd
for a unique quadruplet $p,q,x,y \in \hom_A(W,W')$.

Using the differential~\eqref{eqn. differential of cone cone hom}, we see that $\hom_{\Tw A}(\cone(e),\cone(e'))$ is (a shift of) the mapping cone of the endomorphism
	\eqn\label{eqn. cone cone differential}
	Z =
	\left(
	\begin{array}{c}
	m^2(-,e) \pm m^3(e',\bullet,e) \\
	m^2(\bullet,e)
	\end{array}
	\right)
	\in
	\left(
	\begin{array}{c}
	\hom(\kk,\kk)  \tensor \hom_{\Tw A}(W,W') \\
	\hom(\kk,s\kk) \tensor  \hom_{\Tw A}(W,W')
	\end{array}
	\right).
	\eqnd
We claim that $Z$ admits a homotopy inverse, namely the map
	\eqnn
	W = 
	\left(
	\begin{array}{c}
	m^2(-,e_1) \\ m^2(\bullet,e_1)
	\end{array}
	\right).
	\eqnnd
To see this, observe the homotopies of operators
	\begin{align}
	m^2(m^3(e_2,\bullet,e_1),e_1)
	&\sim
		- m^2(e_2,m^3(\bullet,e_1,e_1))
		+ m^3(m^2(e_2,\bullet),e_1,e_1)
		\nonumber \\ 
		& \qquad
		- m^3(e_2,m^2(\bullet,e_1),e_1)
		+ m^3(e_2,\bullet,m^2(e_1,e_1)) \nonumber \\
	&\sim
		-m^3(\bullet,e_1,e_1)
		+ m^3(\bullet,e_1,e_1) \nonumber \\
		&\qquad
		- m^3(e_2,\bullet,e_1)
		+ m^3(e_2,\bullet,e_1) \nonumber \\
	&= 0. \label{eqn. homotopy in cone cone}
	\end{align}
The first homotopy is due to the $A_\infty$ relation for $m^4(e_2,\bullet ,e_1,e_1)$ and the fact that the $e_i$ are closed. The second homotopy uses the assumption that $e_i$ are units, so that $m^2(e_2,\bullet)$ and $m^2(\bullet,e_1)$ are homotopic to the identity map.
We then have that
	\begin{align}
		WZ
	\left(
	\begin{array}{c}
	- \\ \bullet
	\end{array}
	\right) 
	&= 
    	\left(
    	\begin{array}{c}
    	m^2(m^2(-,e_1)) \pm m^2(m^3(e_2,\bullet ,e_1),e_1) \\ m^2(m^2(\bullet,e_1),e_1)
    	\end{array}
    	\right)  \nonumber \\
	&\sim
    	\left(
    	\begin{array}{c}
    	m^2(-,e_1) \pm 0  \\ m^2(\bullet,e_1)
    	\end{array}
    	\right) \nonumber \\
	&\sim
    	\left(
    	\begin{array}{c}
    	-  \\ \bullet
    	\end{array}
    	\right) 
	\end{align}
where the first homotopy uses~\eqref{eqn. homotopy in cone cone} and the fact that $e_1$ is a unit. The last homotopy uses that $e_1$ is a unit again. A similar argument shows that $ZW \sim \id$, showing that $Z$ is homotopy invertible.
\end{example}

\begin{recollection}[Localizations and naturality]
\label{recollection. localizations of Aoo cats}
Given a collection of closed degree zero morphisms $I \subset A$, one can form the localization $A[I^{-1}]$. We model this following~\cite{gps-covariant}: An object of $A[I^{-1}]$ is an object of $A$, while each morphism complex 
	\eqnn
	\hom_{A[I^{-1}]}(X,Y)
	\eqnnd
is defined as a bar construction, which as a graded $\kk$-module decomposes as
	\eqn\label{eqn. bar complex homs}
	\bigoplus_{l \geq 1} 
		\bigoplus_{f_1,\ldots,f_{l-1}} s^{l-1}\left(\hom_{\tw A}(C_{l-1},Y) \vert \hom_{\tw A}(C_{l-2},C_{l-1})   \vert  \ldots  \vert   \hom_{\tw A}(C_1,C_2)  \vert \hom_{\tw A}(X,C_1)  \right).
	\eqnd
Here, we have used the vertical bar $\vert$ in place of a tensor product over $\kk$. 
Each $C_i:= \cone(f_i)$ is a mapping cone for some morphism $f_i$ in $I$; as indicated in the subscripts of $\hom$, we treat these mapping cones as objects of $\tw A$. The $l=1$ summand is simply $\hom_A(X,Y) = \hom_{\tw A}(X,Y)$. The above morphism complex is filtered by word-length, $l$. Given objects $X_0,\ldots,X_N$ of $A$, the operation $m^N_{A[I^{-1}]}$  on the sub-complex of words of length $l_N,\ldots,l_1$ is a summation 
	\eqn\label{eqn. operations in bar construction}
	\sum (-1)^{\ast} \id^{\tensor \alpha} \tensor m^{\beta}_{\tw A} \tensor \id^{\tensor \gamma}
	\eqnd
where $0 \leq \alpha \leq l_N-1$, $0\leq \gamma \leq l_1-1$, and $\alpha+\beta+\gamma = l_1 + \ldots + l_N$. 
The sign $(-1)^\ast$ depends on further conventions in one's model of $A_\infty$-categories. For one convention, we refer to Lyubashenko-Ovsienko~\cite{lyubashenko-ovsienko}, where the above formula is presented in cocategory form in equation (2.2.1). (Using the notation of ibid., set $\cC = \Tw A$ and set $\cB = \cone(I)$ to be the full subcategory of $\cC$ consisting of mapping cones of elements in $I$. One obtains the quotient category $\sD(\cC | \cB)$ in ibid., and the localization $\cC[I^{-1}]$ is the full subcategory of  $\sD(\cC | \cB)$ spanned by objects of $A$.) 

The inclusion of the word-length 1 morphisms defines a functor $A \to A[I^{-1}]$. 
If $f:A \to B$ sends elements of $I_A$ to elements of $I_B$, then one has an induced functor $A[I_A^{-1}] \to B[I_B^{-1}]$, and this construction respects composition (of functors respecting $I$). 

Localization is natural: If $f:A \to B$ sends elements of $I_A$ to elements of $I_B$, then $A \to B \to B[I_B^{-1}]$ and $A \to A[I_A^{-1}] \to B[I_B^{-1}]$ are the same map (Corollary~3.2 of~\cite{lyubashenko-ovsienko}).
\end{recollection}

We will need the following notion:

\begin{defn}
\label{defn. contractible}
Let $A$ be an $A_\infty$-category and $B \subset A$ a full subcategory. We say that a functor $f: A \to E$ is {\em contractible along $B$} if for every $X,Y \in \ob B$, the chain map
	\eqnn
	\hom_A(X,Y) \to \hom_E(fX,fY)
	\eqnnd
is null-homotopic. 
\end{defn}

\subsection{A right inverse}

\begin{notation}($\id_A$ and $\tau$)
Given a unital $A_\infty$-category $A$, let $\id_A$ denote the collection of units in $A$. Then $\id_A$ is also a collection of closed, degree-zero morphisms in $A^+$. We set
	\eqnn
	\tau(A) := \Aploc .
	\eqnnd
That is, $\tau(A)$ is a localization of $A^+$ along the units of $A$.
Any unital functor $f: A \to B$ induces a strictly unital map
	\eqnn
	\tau(f): \Aploc \to B^+[\id_B^{-1}].
	\eqnnd
We thus obtain a functor (in the classical sense)
	\eqnn
	\tau: \Ainfty \to \Ainftystr,
	\qquad A \mapsto \Aploc,
	\qquad
	f \mapsto \tau(f).
	\eqnnd
\end{notation}

By naturality of augmentation and localization, any unital functor $f: A \to B$ fits into a commutative diagram of $A_\infty$-categories
	\eqn\label{eqn. naturality of tau}
	\xymatrix{
	A \ar[r] \ar[d]^f
		& A^+ \ar[r] \ar[d]^{f^+}
		& \Aploc \ar[d]^{\tau(f)} \\
	B \ar[r]
		& B^+ \ar[r]
		& B^+[\id_B^{-1}].
	}
	\eqnd
	
The main goal of this section is to prove:

\begin{lemma}
\label{lemma. right inverse}
For any unital $A$, the inclusion $A \to \Aploc$ is a quasi-equivalence of $A_\infty$-categories.
\end{lemma}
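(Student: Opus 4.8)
The plan is to show that the inclusion $A \to \Aploc$ factors through the intermediate inclusion $A \to A^+$, and to analyze each stage separately. The first stage $A \to A^+$ is a cohomology equivalence on morphism complexes — it is an isomorphism $\hom_A(X,Y) \to \hom_{A^+}(X,Y)$ for $X \neq Y$ and the split inclusion $\hom_A(X,X) \to \hom_A(X,X) \oplus \kk$ for $X = Y$. The cokernel $\kk$ in the diagonal case is concentrated in degree $0$ and is closed, but crucially it contributes the class of the augmentation unit $1_X$, which in $\Aploc$ becomes identified with the (genuine) unit $e_X$ of $A$. So the real content is that passing to $\Aploc$ repairs the discrepancy between the two ``units'' on the nose at the level of cohomology. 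I would verify that $A \to A^+$ is essentially surjective trivially (same objects), and that it is a quasi-equivalence onto its essential image only up to the correction supplied by localization.

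Next, for the stage $A^+ \to \Aploc$, I would use the bar-complex model of the localization recalled in Recollection~\ref{recollection. localizations of Aoo cats}: $\hom_{\Aploc}(X,Y)$ is the total complex of \eqref{eqn. bar complex homs}, filtered by word-length $l$, where each intermediate object $C_i = \cone(f_i)$ is the cone of a unit $f_i \in \id_A$ (viewed in $\Tw A^+$). The word-length $1$ piece is $\hom_{A^+}(X,Y)$, and the inclusion $A^+ \to \Aploc$ is exactly the inclusion of this piece. To show it is a quasi-isomorphism on each morphism complex, I would run a spectral-sequence / filtration argument: on the associated graded, the $l \geq 2$ terms are built by tensoring together complexes of the form $\hom_{\Tw A^+}(C_{i-1}, C_i)$ and $\hom_{\Tw A^+}(C_i, -)$, and by the cone-of-a-unit computations in Example~\ref{example. hom of cones is a cone} and the subsequent examples, every such complex that has a cone of a unit in \emph{either} slot is acyclic. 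Hence all $l \geq 2$ summands are acyclic, the spectral sequence degenerates, and the $l=1$ inclusion is a quasi-isomorphism. (I must be slightly careful about the direction of the differential mixing adjacent word-lengths, but the word-length filtration is exhaustive and bounded below, and the differential lowers $l$ by the bar-differential piece $m^\beta_{\Tw A}$ contracted across a $\vert$, so the filtration argument is standard; the acyclicity of the graded pieces is what does the work.)

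Finally, essential surjectivity: $A^+ \to \Aploc$ is the identity on objects, so $A \to \Aploc$ hits every object on the nose, and composing with the previous step, $A \to \Aploc$ is essentially surjective and a quasi-isomorphism on all morphism complexes, hence a quasi-equivalence. I expect \textbf{the main obstacle} to be the acyclicity bookkeeping for the higher word-length summands: one needs that for $l \geq 2$ every tensor factor $\hom_{\Tw A^+}(C_{l-1},Y)$, $\hom_{\Tw A^+}(C_{i},C_{i+1})$, $\hom_{\Tw A^+}(X,C_1)$ in \eqref{eqn. bar complex homs} involves at least one cone of a unit, so each such factor is acyclic (by Examples~\ref{example. hom of cones is a cone} and the cone-cone examples), and a tensor product of chain complexes over $\kk$ with an acyclic factor is acyclic \emph{only} if that factor is, say, $\kk$-flat or bounded — so I would either assume/arrange flatness of these complexes (they are mapping cones of homotopy-invertible maps between the given morphism complexes, so contractible via an explicit $\kk$-linear homotopy, which gives acyclicity of the tensor product with no flatness hypothesis) or, better, note that each such factor is \emph{contractible} (null-homotopic identity) rather than merely acyclic, and contractibility is preserved by arbitrary $\kk$-tensor products. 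That upgrade — from ``acyclic'' to ``contractible'' in Example~\ref{example. hom of cones is a cone} and its successors — is the one technical point I would make sure to nail down, after which the filtration argument is routine.
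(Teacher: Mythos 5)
Your proposal breaks down at the crucial point, and in a way that is internally inconsistent. You correctly observe in your first stage that $A \to A^+$ fails to be a quasi-isomorphism on the diagonal hom complexes (there is an extra $\kk\cdot 1_X$ in $H^0\hom_{A^+}(X,X)$), and that the localization must ``repair the discrepancy.'' But your second stage then claims that \emph{every} word-length $\geq 2$ summand of \eqref{eqn. bar complex homs} is acyclic, which would make $A^+ \to \Aploc$ a quasi-isomorphism on all hom complexes --- and then the composite $A \to \Aploc$ could not be a quasi-isomorphism on $\hom(X,X)$, contradicting the statement you are trying to prove. The false step is the acyclicity claim itself: a factor such as $\hom_{\Tw A^+}(X,\cone(e_1))$ is the mapping cone of $m^2(e_1,-)$ acting on $\hom_{A^+}(X,W_1)$, and when $W_1 = X$ this map is \emph{not} homotopic to the identity, because it sends $1_X$ to $e_1$ while $1_X$ and $e_1$ are not cohomologous in $\hom_{A^+}(X,X)$ (a unit of $A$ is not a unit of $A^+$). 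The same failure occurs for $\hom_{\Tw A^+}(\cone(e_i),\cone(e_{i+1}))$ whenever $W_i = W_{i+1}$. So for $X=Y$ the summands with all intermediate objects equal to $X$ are genuinely non-acyclic; indeed the word $1|(1_X - e_X)$ that the paper exhibits lives in such a summand and is precisely what kills the extra class $[1_X]-[e_X]$ in the localization. The paper's proof of the $X \neq Y$ case is exactly your filtration argument, but it is careful to use acyclicity only of factors with $W_i \neq W_{i+1}$, where $\hom_{\Tw A^+} = \hom_{\Tw A}$ and the unit property of $A$ applies.

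The missing content is therefore the entire diagonal case. The paper handles it by first reducing (via Corollary~\ref{cor. id0 and id}, which rests on Propositions~\ref{prop. homotopic cones} and~\ref{prop. localizing cohomologous}) to the localization $\Aploco$ at a single chosen unit $e_0^X$ per object, then isolating the non-acyclic part of $\hom_{\Aploco}(X,X)$ as a subcomplex $\TT$ (all intermediate cones equal to $\cone(e_0^X)$), showing the quotient by $\TT$ is acyclic by your filtration argument, and finally building explicit degree $-1$ homotopies $H$, $G$, $K$ to retract $\TT$ onto a subcomplex $\SS$ and thence onto $\hom_A(X,X)$. None of this is routine bookkeeping, and your proposal as written gives no mechanism for it. (Your side remarks --- that contractibility rather than mere acyclicity of the tensor factors is what one should use to avoid flatness hypotheses, and that the word-length filtration is bounded below and exhaustive --- are correct and match what the paper does, but they address the easy part of the argument.)
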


\begin{remark}
After we circulated the present pre-print, Canonaco-Ornaghi-Stellari found a different proof of the above result. Theirs will appear in~\cite{cos-over-rings}.
\end{remark}

\begin{remark}
We caution that $\Aploc$ is a localization of $A^+$ along the units of $A$ -- not along the units of $A^+$. In particular, when $A$ is unital, the map $A^+ \to \Aploc$ is never a quasi-equivalence. To see this, if $X$ is an object of $A$ and $e_X$ is a unit in $X$, we note that $1_X$ and $e_X$ are not cohomologous in $\hom_{A^+}(X,X)$. However, because $e_X$ is homotopy-idempotent in $A$,  $e_X$ becomes a unit of $X$ in the localization -- in particular, $[1_X] = [e_X] \in H^0_{\Aploc}(X,X)$. (If $e_X$ is a strict unit, the reader may easily verify this by using the word $1|(1_X-e_X)$ in the localization. When $e_X$ is a unit, one must also utilize the element realizing the cohomology-level equality $[m^2(e_X,e_X)] =[e_X]$.) This in particular shows that the map $A^+ \to \Aploc$ is not even an injection on cohomology.

For a concrete example, the reader may wish to contemplate the example of $A$ having one object $X$ whose endomorphism complex is the base ring $\kk$. 
\end{remark}

\begin{remark}
We think of $\tau(A)$ as a ``tautologous'' (hence the $\tau$) strict replacement for $A$.
\end{remark}

\begin{remark}
\label{remark. right inverse}
Assume Lemma~\ref{lemma. right inverse}. Then the map $A \to \Aploc$ is unital (because it is a quasi-equivalence). Further, the commutativity of~\eqref{eqn. naturality of tau} tells us that if $f: A \to B$ is a quasi-equivalence, then so is $\tau(f)$. In particular, $\tau$ passes to the localization:
	\eqnn
	\Ainfty[\eqs^{-1}]
	\to \Ainfty^{\str}[\eqs^{-1}].
	\eqnnd
The commutativity of~\eqref{eqn. naturality of tau} also tells us that the inclusion $A \to A[\id_A^{-1}]$ defines a natural transformation $\eta: \id_{\Ainfty} \to j \circ \tau$, where $j: \Ainftystr \into \Ainfty$ is the inclusion. Passing to the localization $\Ainfty[\eqs^{-1}]$, Lemma~\ref{lemma. right inverse} and Proposition~\ref{prop. good natural transformations induce homotopies} imply that the induced natural transformation $\LL \eta$ is in fact a natural equivalence. Thus, the functor induced by $j$ has a homotopy right inverse: the functor induced by $\tau$. 
\end{remark}

Before proving Lemma~\ref{lemma. right inverse}, we gather two preliminary results (Propositions~\ref{prop. homotopic cones} and~\ref{prop. localizing cohomologous}). They are intuitively clear (and useful!) statements whose proofs we could not locate in the literature, so we record them here as they may be of independent interest to the $A_\infty$-category user.

\begin{prop}
\label{prop. homotopic cones}
Let $A$ be a unital $A_\infty$-category and fix two objects $X,Y \in A$, along with two degree 0 closed morphisms $f,g \in \hom_A(X,Y)$ in the same cohomology class (i.e., $f$ and $g$ are homotopic). Then the objects $\cone(f)$ and $\cone(g)$ in $\Tw A$ are isomorphic in $H^0(\Tw A)$. 
\end{prop}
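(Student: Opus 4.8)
The quickest argument is conceptual: recall that $\Tw A$ is pretriangulated, so $H^0(\Tw A)$ is a triangulated category in which $X \xrightarrow{[f]} Y \to \cone(f) \to sX$ is a distinguished triangle; since $f$ and $g$ are homotopic, $[f]=[g]$ in $H^0\hom_{\Tw A}(X,Y)$, and the cone of a given morphism in a triangulated category is unique up to (non-canonical) isomorphism, whence $\cone(f) \cong \cone(g)$ in $H^0(\Tw A)$. However, it is worth recording the isomorphism explicitly, as it is simply the $A_\infty$ incarnation of the classical ``shear'' isomorphism $\bigl(\begin{smallmatrix}1 & h\\ 0 & 1\end{smallmatrix}\bigr)\colon\cone(f)\xrightarrow{\sim}\cone(g)$ attached to a homotopy $g-f = m^1 h$. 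The plan for the explicit route is to write down a closed degree-$0$ element $\phi\in\hom_{\Tw A}(\cone(f),\cone(g))$ using the matrix description~\eqref{eqn. matrix elements}--\eqref{eqn. differential of cone cone hom}, and then produce a closed inverse in $H^0(\Tw A)$. The only wrinkle relative to the strict case is that $A$ is merely unital, so the diagonal entries must be units $e_X,e_Y$ rather than strict units, which forces correction terms into the off-diagonal entry.

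Concretely: pick $h\in\hom_A(X,Y)$ of degree $-1$ with $m^1 h = \pm(g-f)$, and---since $f,g$ are closed and $e_X,e_Y$ are units---pick degree $-1$ elements $k_1,k_2$ with $m^1 k_1 = m^2(e_Y,f)-f$ and $m^1 k_2 = m^2(g,e_X)-g$. With $x := h+k_2-k_1$ (signs read off from~\eqref{eqn. differential of cone cone hom}), set
	\eqnn
	\phi := \left(\begin{array}{cc} e_Y & s^{-1}1\tensor x\\ 0 & e_X \end{array}\right)\in\hom_{\Tw A}(\cone(f),\cone(g)).
	\eqnnd
Feeding $p=e_Y$, $y=e_X$, $q=0$ into~\eqref{eqn. differential of cone cone hom}: the $q$-terms vanish, the diagonal entries are $m^1 e_Y=0=m^1 e_X$, and the top-right entry is $s^{-1}1\tensor(\pm m^1 x\pm m^2(e_Y,f)\pm m^2(g,e_X))$, which is $0$ by the choice of $x$ (here one uses that $m^2(e_Y,f)$ and $m^2(g,e_X)$ carry opposite signs, exactly as in the strict shear matrix). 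So $\phi$ represents a morphism $\cone(f)\to\cone(g)$ in $H^0(\Tw A)$.

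Finally, invertibility: running the symmetric recipe with $f,g$ (and $h$, $-h$) swapped produces $\psi\colon\cone(g)\to\cone(f)$, and one computes $m^2_{\Tw A}(\psi,\phi)$ from~\eqref{eqn. differential of cone cone hom}: its diagonal entries are $m^2(e_Y,e_Y)$ and $m^2(e_X,e_X)$---each cohomologous to the respective unit---and its off-diagonal entries are sums of $m^1$-exact and $m^3$-type terms that one checks are null-homotopic using only that the $e$'s are units, so $m^2_{\Tw A}(\psi,\phi)\simeq\operatorname{diag}(e_Y,e_X)$, which represents $\id_{\cone(f)}$ in $H^0$; symmetrically $m^2_{\Tw A}(\phi,\psi)\simeq\id_{\cone(g)}$. (One can instead observe that postcomposition by $\phi$ induces on each $\hom_{\Tw A}(T,-)$, identified via Example~\ref{example. hom of cones is a cone}, the shear map of the mapping cones of the homotopic chain maps $m^2(f,-)\simeq m^2(g,-)$, hence a quasi-isomorphism, and invoke the $A_\infty$ Yoneda lemma.) I expect the main obstacle to be purely clerical: tracking the signs in~\eqref{eqn. differential of cone cone hom} and carrying the correction terms $k_1,k_2$ that appear because $e_X,e_Y$ are homotopy units rather than strict units.
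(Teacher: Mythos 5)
Your proposal is correct in substance but takes a genuinely different route from the paper. The paper explicitly declines the computational approach (``While it is possible to prove the proposition explicitly using $A_\infty$ relations, we invoke a different argument''): it embeds $\Tw A$ into a dg-category via the Yoneda functor $\cY$, uses that the induced functor on $H^0$ is triangulated and sends mapping-cone sequences to distinguished triangles (citing Bespalov--Lyubashenko--Manzyuk), deduces $\cone(\cY(f))\cong\cY(\cone(f))$ and likewise for $g$, settles the question in the dg-category case, and pulls the isomorphism back through the fully faithful $H^0\cY$. Your first, ``conceptual'' paragraph is morally this same argument run directly in $H^0(\Tw A)$; the paper's detour through $\cY$ exists precisely so that the cone-uniqueness axiom is only ever invoked for an honest dg-category, with the $A_\infty$ side contributing only the weaker fact that cone sequences are examples of distinguished triangles. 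Your explicit shear matrix is the computation the paper avoids; it buys an explicit representative of the isomorphism at the cost of sign- and unit-bookkeeping.

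There is one genuine soft spot in your explicit route. In the merely-unital setting, $\operatorname{diag}(e_Y,e_X)$ is generally \emph{not} closed in $\hom_{\Tw A}(\cone(f),\cone(f))$: by~\eqref{eqn. differential of cone cone hom} its differential has off-diagonal entry $\pm m^2(e_Y,f)\pm m^2(f,e_X)$, which is exact but need not vanish. So it does not literally represent $\id_{\cone(f)}$ in $H^0$, and the unit of $\cone(f)$ in $\Tw A$ itself requires an off-diagonal correction of exactly the same kind as your $x$. The assertion ``$m^2_{\Tw A}(\psi,\phi)\simeq\operatorname{diag}(e_Y,e_X)$, which represents $\id_{\cone(f)}$'' therefore has to be restated against the corrected unit, and the homotopy witnessing it acquires additional terms. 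Your parenthetical alternative --- checking that $m^2(\phi,-)\colon\hom_{\Tw A}(T,\cone(f))\to\hom_{\Tw A}(T,\cone(g))$ is a quasi-isomorphism for every $T$ (it is the shear between mapping cones of the homotopic chain maps of Example~\ref{example. hom of cones is a cone}) and invoking the $A_\infty$ Yoneda lemma --- cleanly sidesteps this and is in effect the local version of the paper's global argument; I would promote it from a parenthesis to the actual proof of invertibility.
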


\begin{proof}
While it is possible to prove the proposition explicitly using $A_\infty$ relations, we invoke a different argument. First observe that the claim is true if $A$ is a dg-category. Our goal is to reduce the claim to the dg-category case.

Let us recall the Yoneda embedding $\cY : \Tw A \to \fun_{A_\infty}(\Tw A)^{\op},\chain_\kk)$. We will let $\cY(\Tw A)$ denote the full subcategory of $\fun_{A_\infty}( (\Tw A)^{\op}, \chain_\kk)$ spanned by $\cY(\ob \Tw A)$. 

Then the map $\Tw A \to \cY(\Tw A)$ is an equivalence of $A_\infty$-categories, meaning it admits an inverse up to natural equivalence. (See Corollary~A.9 of~\cite{lyubashenko-manzyuk-bimodules} and the references there.) In particular, it is a quasi-equivalence. By Proposition~13.19 of~\cite{bespalov-lyubashenko-manzyuk}, the induced functor of $H^0$ categories is triangulated. All we will need to know about the pretriangulated structures of the domain in codomain is that mapping cone sequences gives rise to examples of distinguished triangles (see Definition~13.18 of ibid., taking $n=1$):  In particular, by the triangulated category axioms, we find that in the 0th cohomology category $H^0(\cY(\Tw A))$, there exist isomorphisms
	\eqnn
	\cone(\cY(f)) \cong \cY(\cone(f)),
	\qquad
	\cone(\cY(g)) \cong \cY(\cone(g))
	\qquad \in H^0(\cY(\Tw A)).
	\eqnnd
(and likewise for $g$). Because $\Tw A \to \cY(\Tw A)$ is a quasi-equivalence, the hypothesis that $[f] = [g] \in H^0\hom_A(X,Y)$ implies that $[\cY(f)] = [\cY(g)]$. Appealing to the dg-category case, we conclude that
	\eqnn
	\cone(\cY(f)) \cong \cone(\cY(g)) \in H^0(\cY(\Tw \cA)). 
	\eqnnd
We conclude from the above two inline equations that
	\eqnn
	\cY(\cone(f)) \cong \cY(\cone(g)).
	\eqnnd
Because the Yoneda embedding is an equivalence onto its image, we have
	\eqnn
	\cone(f) \cong \cone(g) \in H^0(\Tw A).
	\eqnnd
\end{proof}

\begin{prop}
\label{prop. localizing cohomologous}
Fix a unital $A_\infty$-category $A$.
Suppose that $W_0 \subset W$ are collections of degree-zero, closed morphisms in $A$ such that $W_0$ and $W$ define the same collection of morphisms in the cohomology category $H^0 A$.
The functor
	\eqnn
	A[W_0^{-1}] \to
	A[W^{-1}]
	\eqnnd
is a quasi-equivalence. In particular, up to quasi-equivalence, a localization $A[W^{-1}]$ is determined by $H^0(W)$. 
\end{prop}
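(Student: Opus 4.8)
\emph{Proof plan.} The functor $A[W_0^{-1}]\to A[W^{-1}]$ is a bijection on objects (both localizations have object set $\ob A$), so it suffices to show it is a quasi-isomorphism on each morphism complex; essential surjectivity on $H^0$ is then automatic.

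The route I would take is to realize both localizations, following Recollection~\ref{recollection. localizations of Aoo cats}, as full subcategories of the quotients $\sD(\Tw A \mid \cone(W_0))$ and $\sD(\Tw A \mid \cone(W))$ of Lyubashenko--Ovsienko, spanned by $\ob A$, and to use that such quotients are invariant under quasi-equivalence of the full subcategory one quotients by. By Proposition~\ref{prop. homotopic cones}, the hypothesis $H^0(W_0)=H^0(W)$ together with $W_0\subseteq W$ says that every object $\cone(f)$ with $f\in W$ is isomorphic in $H^0(\Tw A)$ to an object $\cone(f_0)$ with $f_0\in W_0$. Hence the inclusion $\cone(W_0)\hookrightarrow\cone(W)$ of full $A_\infty$-subcategories of $\Tw A$ is fully faithful and essentially surjective on $H^0$, i.e.\ a quasi-equivalence. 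Quotienting $\Tw A$ by a quasi-equivalent subcategory then produces a quasi-equivalence $\sD(\Tw A\mid\cone(W_0))\to\sD(\Tw A\mid\cone(W))$ which is the identity on objects; restricting to the full subcategories spanned by $\ob A$ gives the desired quasi-equivalence $A[W_0^{-1}]\to A[W^{-1}]$. The final clause of the proposition is then immediate: $H^0(W)$ determines $\cone(W)$ up to $H^0(\Tw A)$-isomorphism of objects, hence up to quasi-equivalence, hence determines $A[W^{-1}]$ up to quasi-equivalence.

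If one wishes to avoid invoking quotient-invariance as a black box, here is a more hands-on variant. First reduce to $W=W_0\cup\{g\}$ with $[g]=[f_0]$ for a single $f_0\in W_0$: write $W$ as an increasing exhaustive union of collections built from $W_0$ by adjoining one morphism of $W\setminus W_0$ at a time; the morphism complexes~\eqref{eqn. bar complex homs} are direct sums over tuples of localizing morphisms, so they commute with the resulting filtered colimits, and filtered colimits of complexes are exact, so the one-step case suffices. For that case, localize in stages, $A[(W_0\cup\{g\})^{-1}]\simeq B[\{\bar g\}^{-1}]$ with $B:=A[W_0^{-1}]$ and $\bar g$ the image of $g$; since $f_0\in W_0$ is inverted in $B$ and $[\bar g]=[\bar f_0]$ in $H^0 B$, the morphism $\bar g$ is an isomorphism in $H^0 B$. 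Finally, localizing at an $H^0$-isomorphism $h\colon U\to V$ of an $A_\infty$-category $C$ is a quasi-equivalence: $\cone(h)$ is then a zero object of $H^0(\Tw C)$, hence a \emph{contractible} object (null-homotopic identity), so $\hom_{\Tw C}(\cone(h),-)$ is a contractible complex, so by~\eqref{eqn. bar complex homs} every word-length-$\ge 2$ summand of $\hom_{C[\{h\}^{-1}]}(X,Y)$ is contractible; filtering by word length (the $A_\infty$-operations cannot increase it), the quotient by the word-length-$1$ subcomplex $\hom_C(X,Y)$ is filtered with contractible associated graded, hence acyclic, so the inclusion $\hom_C(X,Y)\hookrightarrow\hom_{C[\{h\}^{-1}]}(X,Y)$ is a quasi-isomorphism. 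This is the same Example~\ref{example. hom of cones is a cone} mechanism applied to the localized category.

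\emph{Main obstacle.} Either route rests on one standard but non-formal input about the point-set $A_\infty$ localization: that the quotient $\sD(\Tw A\mid-)$ is invariant under quasi-equivalence of the subcategory being quotiented (first route), or, equivalently, the combination of ``localize in stages'' with ``$A\to A[W_0^{-1}]$ genuinely inverts $W_0$ in $H^0$'' (second route). I would pin these down by citation to Lyubashenko--Ovsienko and the dg precedent of Drinfeld; failing a clean citation, the quotient-invariance statement can be proved directly from the bar model, where it amounts to the homotopy-invariance of two-sided bar constructions, the requisite homotopies being supplied by the $A_\infty$-structure. Everything else — bijectivity on objects, Proposition~\ref{prop. homotopic cones}, exactness of filtered colimits, the word-length spectral sequence, and contractibility of $\hom$ out of a contractible object — is routine.
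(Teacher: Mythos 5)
Your first route is essentially the paper's proof, and the one step you defer to a citation is exactly the step the paper has to supply itself. The paper realizes both localizations inside the quotients $\sD(\Tw A\mid\cone(W_0))$ and $\sD(\Tw A\mid\cone(W))$ and uses Proposition~\ref{prop. homotopic cones} just as you do; but rather than invoking ``quotients are invariant under quasi-equivalence of the subcategory being quotiented'' as a black box, it discharges this via the universal property of $\sD(-\mid-)$ from Lyubashenko--Manzyuk: for every test category $E$, restriction identifies $\fun_{A_\infty}(\sD(\Tw A\mid\cone(W_i)),E)$ with the functors out of $\Tw A$ that are contractible along $\cone(W_i)$, and the $H^0$-isomorphisms $\cone(w_0)\cong\cone(w)$ show that the two contractibility conditions cut out the \emph{same} class of functors (a functor null-homotopic on $\hom(\cone(w_0),\cone(w_0'))$ is null-homotopic on $\hom(\cone(w),\cone(w'))$ because pre- and post-composition with the comparison maps are homotopy equivalences intertwined, up to homotopy, by any $A_\infty$-functor). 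Setting $E=D_0$ and running a Yoneda-style argument then produces an actual inverse to $j\colon D_0\to D$ up to natural equivalence. So if you want to keep your first route, this universal-property argument is the clean way to fill the gap you flagged, and it is more elementary than general homotopy-invariance of two-sided bar constructions. Your second, hands-on route is genuinely different from the paper and also workable: the contractibility (not merely acyclicity) of the length-$\ge 2$ associated gradeds is the right thing to insist on over a general ring, and your observation that $m^2(h,-)$ is a homotopy equivalence for an $H^0$-isomorphism $h$ is correct in a unital category. Its remaining costs are the ones you already name, plus one you half-name: to run the cone-contractibility argument in $B=A[W_0^{-1}]$ you need $B$ to be unital, and to justify ``localize in stages'' for this specific bar model you need the model to satisfy the universal property of localization --- both true, both requiring the same Lyubashenko--Ovsienko inputs the paper cites, so the first route is the shorter path.
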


\begin{proof}[Proof of Proposition~\ref{prop. localizing cohomologous}.]
Let $\cone(W_0) \subset \Tw A$ denote the full subcategory whose objects are mapping cones of the morphisms $w_0 \in W_0$. We let $\cone(W) \subset \Tw A$ be the full subcategory spanned by mapping cones morphisms $w \in W$.

By Recollection~\ref{recollection. localizations of Aoo cats}, the localizations $A[W_0^{-1}]$ and $A[W^{-1}]$ are defined using the construction from~\cite{lyubashenko-manzyuk} of the quotient $A_\infty$-categories
	\eqnn
	D_0 := \sD(\Tw A | \cone(W_0)),
	\qquad
	D := \sD(\Tw A | \cone(W)).
	\eqnnd
One has maps $p_0: \Tw A \to D_0$ and $p: \Tw A \to D$. Moreover, by the naturality of quotients (Corollary~3.2 of~\cite{lyubashenko-ovsienko}), the identity map of $\Tw A$ induces an $A_\infty$-functor $D_0 \to D$, fitting into a commuting diagram
	\eqnn
	\xymatrix{
	\Tw A \ar[r]^{p_0} \ar[d]^= & D_0 \ar[d]_{j} \\
	\Tw A \ar[r]^{p} & D.
	}
	\eqnnd
Every functor above acts as the identity on objects.

We set another piece of notation. If $A$ is an $A_\infty$-category and $B \subset A$ is a full subcategory, for an arbitrary $A_\infty$-category $E$, let
	\eqnn
	\fun_{A_\infty}(A,E)_{\text{mod} B} \subset \fun_{A_\infty}(A,E)
	\eqnnd
denote the full subcategory of those functors $F$ for which $F$ is contractible along $B$ (Definition~\ref{defn. contractible}). Then the quotient category $\sD(A|B)$ satisfies the following property: For any $A_\infty$-category $E$, the restriction along $A \to \sD(A|B)$ 
	\eqnn
	\fun_{A_\infty}( \sD(A|B), E)
	\to
	\fun_{A_\infty}( A, E)_{\text{mod}B}
	\eqnnd
admits an inverse up to natural equivalence. (To see this, combine Theorem~1.3 of~\cite{lyubashenko-manzyuk}, which proves the universal property for the model $\sq(A|B)$ of the quotient, then invoke Proposition~7.4 of ibid., which states that there is a functor $\sq(A|B) \to \sD(A|B)$ admitting an inverse up to natural equivalence.) 

We now claim that the functor $j: D_0 \to D$ is a quasi-equivalence of $A_\infty$-categories. This will take a few paragraphs.

Combining the observations above, any $A_\infty$-category $E$ induces a commutative diagram of $A_\infty$-categories
	\eqn\label{eqn. universal property square for D and D0}
	\xymatrix{
	\fun_{A_\infty}( \Tw A, E)_{\text{mod}\cone(W_0)}
		& \fun_{A_\infty}( D_0, E)   \ar[l]_-{\simeq} \\
	\fun_{A_\infty}( \Tw A, E)_{\text{mod}\cone(W)} \ar[u]
		&	\fun_{A_\infty}( D,E) \ar[l]_-{\simeq} \ar[u]_{j^*}
	}
	\eqnd
where the horizontal arrows admit inverses up to natural equivalence. The righthand vertical arrow is the restriction along $j: D_0 \to D$, while the lefthand verticle arrow is the full inclusion induced by the observation that if a functor $F$ is contractible along all of $\cone(W)$, it is in particular contractible along $\cone(W_0)$. 

In fact, the lefthand vertical arrow in~\eqref{eqn. universal property square for D and D0} is not only an injection on objects, but a bijection on objects. To see this, we need only prove that if $F: \Tw A \to E$ is contractible along $\cone(\id_0)$, then it is contractible along $\cone(W)$. This is straightforward: By hypothesis of unit, any $w \in W$ is cohomologous to some $w_0 \in W_0$. Thus, by Proposition~\ref{prop. homotopic cones}, we have an isomorphism in $H^0(\Tw A)$ from $\cone(w_0)$ to $\cone(w)$. So let us choose such a representative homotopy-invertible map:
	\eqnn
	a^X: \cone(w_0) \to \cone(w)
	\eqnnd
in $\Tw A$. Then the for any other $w' \in W$ with cohomologous $w_0' \in W_0$, the definition of $A_\infty$-functor gives rise to a homotopy-commuting diagram of cochain complexes and chain maps
	\eqnn
	\xymatrix{
	\hom_{\Tw A}(\cone(w_0),\cone(w_0')) \ar[rr]_-{\simeq}^-{ (a^X)^\ast(a^Y)_\ast} \ar[d]^F
	&&	\hom_{\Tw A}(\cone(w),\cone(w'))  \ar[d]^F \\
	\hom_{E}(F\cone(e_0^X),F\cone(w_0') \ar[rr]_-{\simeq}^-{ (Fa^X)^\ast(Fa^Y)_\ast}
	&&	\hom_{E}(F\cone(w),F\cone(w')) .
	}
	\eqnnd
(There is ambiguity in the notation $(a^X)^\ast (a^Y)_\ast$, as one could mean either of
	\eqnn
	\bullet \mapsto m^2(m^2(a^Y,\bullet),a^X),
	\qquad
	\bullet \mapsto m^2(a^Y,m^2(\bullet,a^X)),
	\eqnnd
and of course these two operations are homotopic by the $A_\infty$-relations; so the diagram commutes up to homotopy regardless of which of the above one means.)  Because $a^X$ and $a^Y$ are homotopy invertible, so are their push- and pull- maps; so the horizontal arrows are homotopy equivalences of chain complexes. In particular, if the lefthand downward arrow is null-homotopic, so is the righthand downward arrow. This proves that the lefthand vertical arrow in~\eqref{eqn. universal property square for D and D0} is a bijection on objects.

In particular, the arrow is an isomorphism of $A_\infty$-categories. We conclude from the other equivalences in~\eqref{eqn. universal property square for D and D0} that the restriction map
	\eqnn
	j^*: 
	\fun_{A_\infty}(D,E) \to
	\fun_{A_\infty}(D_0,E) 
	\eqnnd
admits an inverse up to natural equivalence.

Setting $E = D_0$ and choosing a homotopy inverse $g$ to $j^*$, we see there exists a functor $g(\id_{D_0}) = F: D \to D_0$ for which $j^* F = F \circ j$ admits a natural equivalence to $\id_{D_0}$. On the other hand, the restriction $F^*$ must be homotopic to $g$ by uniqueness of inverses. We conclude that $F$ and $j$ are inverse functors up to homotopy. 

This proves our claim that $j$ is a quasi-equivalence (in fact, it is invertible up to natural equivalence).

We conclude the proof of Corollary~\ref{cor. id0 and id} as follows: $A[W_0^{-1}]$ is the full subcategory of $D_0$ spanned by $\ob A$. Likewise, $A[W^{-1}]$ is the full subcategory of $D$ spanned by $\ob A$. Because $j: D_0 \to D$ is the identity on objects, the result follows.
\end{proof}

\begin{remark}[Understanding hom complexes]
Fix a unital $A_\infty$-category $A$ and fix two objects $X,Y \in A$. 
Let us understand the morphism complex $\hom_{\Aploc}(X,Y)$ using Recollection~\ref{recollection. localizations of Aoo cats}. For $l \geq 2$, the $l$th associated graded of the length filtration (i.e., the quotient of the length $\leq l$ words by the length $\leq l-1$ words) is -- as a chain complex, and after a shift of $s^{-l+1}$ -- a direct sum
	\eqn\label{eqn. hom complex summands}
	\bigoplus_{e_1,\ldots,e_{l-1}} \hom_{\Tw A^+}(C_{l-1},Y) | \hom_{\Tw A^+}(C_{l-2},C_{l-1}) | \ldots | \hom_{\Tw A^+}(X, C_1)
	\eqnd
where each $e_i: W_i \to W_i$ is a unit in $A$ and $C_i = \cone(e_i) \in \Tw A^+$. (We set $W_0 = X$ and $W_l = Y$ for brevity of notation.)
\end{remark}

\begin{notation}
In contrast with our diligent use of $s$ in~\eqref{eqn. bar complex homs}, we will often drop the shifts when expressing a single associated graded component, as we have in~\eqref{eqn. hom complex summands}. 
\end{notation}

%
%

We will make use of the following corollary of Proposition~\ref{prop. localizing cohomologous} in the proof of Lemma~\ref{lemma. right inverse}:

\begin{corollary}
\label{cor. id0 and id}
Fix a unital $A_\infty$-category $A$.
Using the axiom of choice, choose a unit $e_0$ for every object $X \in \ob A$, and let $\id_0$ denote the collection $\{e_0\}_{X \in \ob A}$. The functor
	\eqnn
	\Aploco \to \Aploc
	\eqnnd
is a quasi-equivalence.
\end{corollary}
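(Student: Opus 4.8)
The plan is to obtain this as an immediate instance of Proposition~\ref{prop. localizing cohomologous}, applied not to $A$ itself but to the augmented category $A^+$. First I would observe that $A^+$ is strictly unital (it has the adjoined strict units $1_X$), hence unital, so Proposition~\ref{prop. localizing cohomologous} is available with $A^+$ in the role of the ambient unital $A_\infty$-category. I then set $W_0 := \id_0$ and $W := \id_A$; as noted in the definition of $\tau$, these are collections of closed degree-zero morphisms in $A^+$, and by construction $W_0 \subseteq W$. The functor in the statement of the corollary is then, by definition of $\Aploco$ and $\Aploc$, exactly the functor $A^+[W_0^{-1}] \to A^+[W^{-1}]$ of Proposition~\ref{prop. localizing cohomologous}.

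The only real content is verifying the hypothesis of Proposition~\ref{prop. localizing cohomologous}: that $\id_0$ and $\id_A$ determine the same collection of morphisms in the cohomology category $H^0(A^+)$. Since units at distinct objects lie in distinct $\hom$-complexes, this reduces to showing that for a fixed object $X$, every unit $e \in \hom_A(X,X)$ is cohomologous in $A^+$ to the chosen unit $e_0 = e_0^X$. I would prove this via the classical fact that any two units at $X$ are cohomologous: because $e$ is a unit, the map $m^2(e,-)$ of~\eqref{eqn. compositions with e} is homotopic to the identity, so (applying it to the closed element $e_0$) $m^2(e,e_0) \simeq e_0$; because $e_0$ is a unit, $m^2(-,e_0) \simeq \id$, so $m^2(e,e_0) \simeq e$; hence $[e] = [e_0]$ in $H^0\hom_A(X,X)$. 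Since $\hom_A(X,X)$ is a subcomplex direct summand of $\hom_{A^+}(X,X) = \hom_A(X,X) \oplus \kk$ (the augmentation line being closed under $m^1_{A^+}$), this equality of cohomology classes persists in $H^0\hom_{A^+}(X,X)$. Therefore the image of $\id_0$ in $H^0(A^+)$ agrees arrow-by-arrow with the image of $\id_A$.

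With that hypothesis in hand, Proposition~\ref{prop. localizing cohomologous} applied to $A^+$ says precisely that $A^+[\id_0^{-1}] \to A^+[\id_A^{-1}]$, that is $\Aploco \to \Aploc$, is a quasi-equivalence, which is the claim. I do not anticipate a genuine obstacle: all the work has been front-loaded into Proposition~\ref{prop. localizing cohomologous} (and, through its proof, into Proposition~\ref{prop. homotopic cones} and the universal property of $A_\infty$ quotients); the present corollary is just the observation that its hypothesis is met by the two natural choices of localizing set inside $A^+$. If anything, the one point requiring a moment's care is the transition from cohomology in $A$ to cohomology in $A^+$, handled in the previous paragraph by the direct-summand remark.
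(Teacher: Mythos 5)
Your proposal is correct and is exactly the derivation the paper intends: the corollary is stated without a separate proof precisely because it is the instance of Proposition~\ref{prop. localizing cohomologous} applied to the unital (indeed strictly unital) category $A^+$ with $W_0 = \id_0$ and $W = \id_A$, the hypothesis being the standard fact that any unit $e$ of $X$ satisfies $[e] = [m^2(e,e_0)] = [e_0]$ in $H^0\hom_A(X,X)$ and hence in $H^0\hom_{A^+}(X,X)$. Your explicit verification of that hypothesis, including the passage from $H^0(A)$ to $H^0(A^+)$ via the direct-summand decomposition, correctly supplies the details the paper leaves implicit.
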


\begin{proof}[Proof of Lemma~\ref{lemma. right inverse}.]
The inclusion is a bijection on objects. So it suffices to show that the maps on hom complexes are quasi-isomorphisms. To this end, 
fix two objects $X,Y \in \ob A$. 

{\em When $X \neq Y$.} Let us study the length filtration of $\hom_{\Aploc}(X,Y)$ (Recollection~\ref{recollection. localizations of Aoo cats}).  The length $l=1$ part is $\hom_A(X,Y)$ because $X \neq Y$. We now claim the $l$th associated gradeds for $l \geq 2$ are acyclic. Consider the direct summand of~\eqref{eqn. hom complex summands} corresponding to a tuple $(e_1,\ldots,e_{l-1})$. Here, we treat this summand as a direct summand of the $l$th associated graded quotient -- in particular, its differential involves only $m^1_{\Tw A^+}$ terms.

Because $X \neq Y$, there exists at least one $i$ for which $W_i \neq W_{i+1}$.  
\begin{itemize}
	\item If we can choose such an $i$ to satisfy $l-2 \geq i \geq 1$, we have that
	\eqnn
	\hom_{\Tw A}(C_i, C_{i+1}) = 
	\hom_{\Tw A^+}(C_i, C_{i+1}).
	\eqnnd
On the other hand, $\hom_{\Tw A}(C_i, C_{i+1})$ is a mapping cone for the morphism $Z$ identified in~\eqref{eqn. cone cone differential}. (Using the notation there, $e = e_i$ and $e' = e_{i+1}$.) In particular, the direct summand is a mapping cone for the chain map
	\eqnn
	\id^{| (l-i-1)} | Z | \id^{| i}
	:=
	\id | \ldots | \id | Z | \id | \ldots | \id
	\eqnnd
which is homotopy-invertible because $Z$ is; so the direct summand is acyclic. 
\item If $i = 0$ or $i=l-1$, then the direct summand is a mapping cone for the homotopy-invertible map $\id^{|( l-1)} | m^2(e_1,-)$ or $m^2(-,e_{l-1}) | \id^{|( l-1)}$, so is acyclic.
\end{itemize} 
All told, the $l$th associated graded is a direct sum of acyclic complexes,  hence acyclic. This shows that the map $\hom_A(X,Y) \to \hom_{\Aploc}(X,Y)$ is a quasi-isomorphism when $X \neq Y$.

{\em When $X = Y$.} 
Choose a unit $e_0^X$ for every object $X \in \ob A$ and
consider the chain maps
	\eqnn
	\hom_A(X,X) \to \hom_{\Aploco}(X,X)
	\to \hom_{\Aploc}(X,X).
	\eqnnd
Corollary~\ref{cor. id0 and id} tells us the second map is a quasi-isomorphism, so it suffices to prove that the first arrow is a quasi-isomorphism. 

Let $\TT \subset \hom_{\Aploco}(X,X)$ be the subcomplex whose length 1 filtration consists of all of $\hom_{A^+}(X,X)$, and whose length $l \geq 2$ compoent consists of the summand in~\eqref{eqn. hom complex summands} for which $e_1 = \ldots = e_{l-1} = e_0^X \in \id_0$. 
Then the quotient $\hom_{\Aploco}(X,X) / \TT$, as a graded $\kk$-module, is isomorphic to a direct sum of those summands in~\eqref{eqn. hom complex summands} for which $W_i \neq W_{i+1}$ for some $i$ (and -- though this won't matter -- for which each $e_i$ is in $\id_0$). By a similar argument as in the case $X \neq Y$, we conclude that the length filtration of $\hom_{\Aploco}(X,X) / \TT$ has acyclic associated gradeds. In particular, we conclude that $\hom_{\Aploco}(X,X) / \TT$ is acylic, hence we are left to prove that the inclusion $\hom_A(X,X) \to \TT$ is a quasi-isomorphism.

For this, consider the subcomplex $\SS \subset \TT$ where the length $l=1$ component of $\SS$ is isomorphic to $\hom_A(X,X)$, and the length $l\geq 2$ component of $\SS$ is
	\eqn\label{eqn. S length summand}
	\hom_{\Tw A}(\cone(e_{0}^X),X) | \hom_{\Tw A^+}(\cone(e_{0}^X),\cone(e_{0}^X)) | \ldots | \hom_{\Tw A^+}(X,\cone(e_{0}^X)).
	\eqnd
Note the (important!) subscript $\Tw A$ in the first bar factor, in contrast to the $\Tw A^+$ subscripts in all other factors. We will prove the following two claims:

\enum[(I)]
	\item\label{item. t homotopy} There exists a chain map $t: \TT \to \SS$ so that the composition $\TT \xrightarrow{t} \SS \xrightarrow{\subset} \TT$ is homotopic to the identity.
	\item\label{item. t and homA} For the same map $t$ as above, the composition
		$\hom_A(X,X) \xrightarrow{\subset} \TT \xrightarrow{t} \SS$ has image contained in $\hom_A(X,X)$. Moreover, this composition -- treated as a self-chain-map of $\hom_A(X,X)$ -- is homotopic to the identity map $\id_{\hom_A(X,X)}$.
\enumd
We first prove Claim~\eqref{item. t homotopy}. 

Define a degree -1 operator $H$ on $\TT$ acting on generators by
	\begin{align}
	\bx_l \vert \ldots \vert \bx_1
	& \mapsto
	(a \tensor 1_X) \vert \bx_l \vert \ldots \vert \bx_1 \nonumber \\
	&\in \hom_{\Tw A^+}(\cone(e_{l-1}),X) | \hom_{\Tw A^+}(\cone(e_{l-1}),\cone(e_{l-1})) | \ldots |  \hom_{\Tw A^+}(X,\cone(e_1))\nonumber.
	\end{align}
Let us explain the notation.
\begin{itemize}
	\item Every $e_1, e_2, \ldots, e_{l-1}$ is equal to $e^X_0$, but we have written the bar notation above to be explicit about the length of $H(\bx_l | \ldots | \bx_1)$. In particular, note that $H$ increases length filtration by 1. 
	\item $a \tensor 1_X$ follows the notation from~\eqref{eqn. abcd sum tensor}. For brevity of notation, we are identifying $\hom(-,X)$ with a subcomplex of $\hom(-,\cone(e_0^X))$, setting $q=y=0$.
	\item $1_X \in \hom_{A^+}(X,X)$ is the augmentation unit of $X$ (Recollection~\ref{recollection. augmentation}).
	\item In the image of $H$, the letter $\bx_l$ is now an element of $\hom_{\Tw A^+}(\cone(e_{l-1}),\cone(e_{l-1}))$ rather than an element of $\hom_{\Tw A^+}(\cone(e_{l-1}),X)$; as before, the latter complex is the subcomplex for which $q=y=0$.
\end{itemize}
Then $H$ is a homotopy between the maps 
	\eqnn
	\id_{\TT}
	\qquad
	\text{and}
	\qquad
	\bx_l \vert \ldots \vert \bx_1
	\mapsto
	\pm(b \tensor e_{0}^X) \vert \bx_l \vert \ldots \vert \bx_1
	\eqnnd
where $b$ is as in~\eqref{eqn. abcd sum tensor}. (We point out here the importance of working in $\hom_{\Aploco}(X,X)$, and hence the utility of Corollary~\ref{cor. id0 and id}. To construct a homotopy as above inside $\hom_{\Aploc}(X,X)$ involves much more algebra.)

Next, choose a degree -1 endomorphism $\alpha \in \hom_A(X,X)$ realizing a homotopy from $e_0^X$ to $m^2_A(e_0^X,e_0^X)$. Define the degree -1 operator $G: \TT \to \TT$ by
	\eqnn
	G: 
	\bx_l \vert \ldots \vert \bx_1
	\mapsto
	(b \tensor \alpha) \vert \bx_l \vert \ldots \vert \bx_1  .
	\eqnnd
Then $G$ is a homotopy between the maps sending $
	\bx_l \vert \ldots \vert \bx_1$ to
	\eqnn
	\pm(b \tensor e_0^X) \vert \bx_l \vert \ldots \vert \bx_1
	\qquad{\text{and}}\qquad
	\pm(b \tensor m^2_A(e_0^X,e_0^X)) \vert \bx_l \vert \ldots \vert \bx_1.
	\eqnnd
(To see this, it is important to note that $\bx_l$ only has $a$ and $b$ tensor factors, and $ba = bb = 0.$)
Finally consider the degree -1 operator $K: \TT \to \TT$	\eqnn
	K: 
	\bx_l \vert \ldots \vert \bx_1
	\mapsto
	(a \tensor e_0^X) \vert \bx_l \vert \ldots \vert \bx_1  .
	\eqnnd
Then $K$ is a homotopy between  the maps sending $\bx_l \vert \ldots \vert \bx_1$ to
	\eqnn
	\pm(b \tensor m^2(e_0^X,e_0^X)) \vert \bx_1 \vert \ldots \vert \bx_1
	\qquad{\text{and}}\qquad
	\sum_{k \geq 2} \pm m^k_{\Tw A^+}(a \tensor e_0^X, x_{l},\ldots,x_{l-k+2}) \vert x_{l-k+1} \vert \ldots  \vert x_1.	
	\eqnnd
We call this last map $t'$. Then the degree -1 maps $H, G, K$ combine to define a homotopy
	\eqn\label{eqn. homotopy to t'}
	\id_{\TT} \sim t'.
	\eqnd
We now note that $t'$ has image contained in $\SS$. We accordingly define $t$ as having the same effect as $t'$, but as a map with codomain $\SS$:
	\eqnn
	t: \TT \to \SS,
	\qquad
	\bx_l \vert \ldots \vert \bx_1
	\mapsto
	\sum_{k \geq 2} \pm m^k_{\Tw A^+}(a \tensor e_{0}^X, x_{l},\ldots,x_{l-k+2}) \vert x_{l-k+1} \vert \ldots  \vert x_1.	
	\eqnnd
By definition, $t'$ is the composition $\TT \xrightarrow{t} \SS \xrightarrow{\subset} \TT$, so Claim~\eqref{item. t homotopy} follows from~\eqref{eqn. homotopy to t'}.

We now prove Claim~\eqref{item. t and homA}. We see that for an element $x \in \hom_A(X,X)$, 
	\eqnn
	t(x)
	= m^2_A(e_0^X,x).
	\eqnnd
So indeed $t(\hom_A(X,X)) \subset \hom_A(X,X)$. On the other hand, by definition of unit, the map $m^2_A(e_0^X,-)$ is chain-homotopic to the identity map. We have proven the claim.

Recall we were left to prove that $\hom_A(X,X) \to \TT$ is a quasi-isomorphism. 
By observing (using the same old trick as above) that the length filtration on $\SS$ has contractible associated gradeds for $l \geq 2$, we see that the inclusion $\hom_A(X,X) \to \SS$ is a quasi-isomorphism. And our two claims give rise to the following homotopy-commuting diagram of cochain complexes:
	\eqnn
	\xymatrix{
	\hom_A(X,X) \ar[dr]_-{\sim} \ar[r]^-{\subset} 
		& \TT \ar[dr]^-{\id_{\TT}} \ar[d]_{t} 
	\\
		& \SS \ar[r]^-{\subset}
		& \TT
	}
	\eqnnd
It follows that the inclusion $\hom_A(X,X) \to \TT$ is a quasi-isomorphism, as desired.
\end{proof}

\subsection{Proof of Theorem~\ref{theorem. main theorem}}

\begin{lemma}
\label{lemma. j has left inverse}
The functor $\Ainftystr[\eqs^{-1}] \to \Ainfty[\eqs^{-1}]$ induced by $j$ admits a left inverse up to natural equivalence.
\end{lemma}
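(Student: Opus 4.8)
The plan is to show that the functor $\LL(\tau)$ of Remark~\ref{remark. right inverse} --- already known to be a homotopy \emph{right} inverse of $\LL(j)$ --- is simultaneously a homotopy \emph{left} inverse. Regarding $j\colon\Ainftystr\to\Ainfty$ and $\tau\colon\Ainfty\to\Ainftystr$ (together with their classes $\eqs$ of quasi-equivalences) as composable morphisms in $\winftyCat$ and using that $\LL$ is a functor, $\LL(\tau)\circ\LL(j)=\LL(\tau\circ j)$, so it suffices to produce a natural equivalence $\LL(\tau\circ j)\simeq\id_{\Ainftystr[\eqs^{-1}]}$, where $\tau\circ j\colon\Ainftystr\to\Ainftystr$ is the $1$-functor $A\mapsto\Aploc$. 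By Proposition~\ref{prop. good natural transformations induce homotopies}, it is enough to connect $\tau\circ j$ to $\id_{\Ainftystr}$ by a natural transformation of $1$-functors $\Ainftystr\to\Ainftystr$ whose components are all quasi-equivalences; since $j$ visibly preserves quasi-equivalences and $\tau$ does by Remark~\ref{remark. right inverse}, such a transformation automatically sends $\Delta^1\times\eqs$ into $\eqs$.

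The key construction is an ``augmentation collapse''. For a strictly unital $A_\infty$-category $A$ let $e_X$ denote the (unique) strict unit of $X\in\ob A$, and define $c_A\colon A^+\to A$ to be the $A_\infty$-functor that is the identity on the subcomplexes $\hom_A(X,Y)\subset\hom_{A^+}(X,Y)$, sends each augmentation unit $1_X$ to $e_X$, and has vanishing higher components. I expect the first real task to be checking that $c_A$ is a (strict) $A_\infty$-functor: the point is that the defining properties of $e_X$ as a \emph{strict} unit --- closedness, $m^2_A(e_X,-)=\id=m^2_A(-,e_X)$, and vanishing of $m^{\ge3}_A$ on any tuple containing $e_X$ --- exactly mirror the identities satisfied by $1_X$ in $A^+$ (Recollection~\ref{recollection. augmentation}), so $c_A$ commutes with all the $m^n$. (This is precisely where strict unitality of $A$ is indispensable, and explains why $\Ainftystr$ rather than $\Ainfty$ appears on this side.) Uniqueness of strict units makes $c$ natural: for a strictly unital $f\colon A\to B$ one checks $c_B\circ f^+=f\circ c_A$. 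Finally, $c_A$ carries each $e_X\in\id_A\subset A^+$ to a morphism that is already an isomorphism in $H^0(A)$, hence inverts $\id_A$, so it factors through the localization as a strictly unital functor
\eqnn
\tilde c_A\colon\Aploc\to A,\qquad \tilde c_A\circ\bigl(A^+\to\Aploc\bigr)=c_A,
\eqnnd
and the point-set naturality of quotients (Recollection~\ref{recollection. localizations of Aoo cats}) together with uniqueness of this factorization render $\tilde c$ natural in $A$.

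To finish, one verifies that each $\tilde c_A$ is a quasi-equivalence. Let $\iota_A\colon A\to A^+\to\Aploc$ be the natural inclusion, a quasi-equivalence by Lemma~\ref{lemma. right inverse} (which applies, strictly unital categories being unital). Since $c_A$ restricts to the identity on $A$ we get $\tilde c_A\circ\iota_A=\id_A$; as $\iota_A$ and $\id_A$ are quasi-equivalences and $\tilde c_A$ is a bijection on objects, two-out-of-three gives that $\tilde c_A$ is a quasi-equivalence. Thus $\tilde c\colon\tau\circ j\Rightarrow\id_{\Ainftystr}$ is a natural transformation of $1$-functors through quasi-equivalences, and Proposition~\ref{prop. good natural transformations induce homotopies} produces the desired natural equivalence $\LL(\tau\circ j)\simeq\id$, i.e.\ $\LL(\tau)\circ\LL(j)\simeq\id_{\Ainftystr[\eqs^{-1}]}$.

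The main obstacle, as indicated, is purely bookkeeping around $\tilde c_A$: honestly confirming that $c_A$ is an $A_\infty$-functor (a computation that fails outright when $e_X$ is only a homotopy unit) and that its factorization through the bar model of $\Aploc$ can be performed strictly and naturally in $A$. Everything past that is formal, given Lemma~\ref{lemma. right inverse} and Proposition~\ref{prop. good natural transformations induce homotopies}.
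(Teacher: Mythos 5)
Your strategy is essentially the paper's: both proofs hinge on the strictly unital collapse functor $c_A\colon A^+\to A$ that is the identity on $\hom_A$ and sends each augmentation unit $1_X$ to the strict unit of $X$, and both feed the resulting naturality data into Proposition~\ref{prop. good natural transformations induce homotopies}. Your verification that $c_A$ is a strict $A_\infty$-functor and that uniqueness of strict units makes it natural is correct. The gap is the step you dismiss as bookkeeping: the claimed \emph{strict} factorization $\tilde c_A\colon\Aploc\to A$ with $\tilde c_A\circ(A^+\to\Aploc)=c_A$. In the bar model of Recollection~\ref{recollection. localizations of Aoo cats}, $\hom_{\Aploc}(X,Y)$ contains all words of length $\ge 2$ passing through cones of units, and a functor to $A$ must be prescribed on all of them; sending a morphism to an isomorphism in $H^0$ does not produce such a prescription. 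The universal property of the quotient (Theorem~1.3 of Lyubashenko--Manzyuk, as invoked in the proof of Proposition~\ref{prop. localizing cohomologous}) yields extensions only up to natural equivalence and only after a contractibility check, one category at a time --- nowhere near the on-the-nose, $A$-natural transformation of $1$-functors $\tau\circ j\Rightarrow\id_{\Ainftystr}$ that Proposition~\ref{prop. good natural transformations induce homotopies} requires as input. As stated, $\tilde c_A$ does not exist.

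The repair is small but changes the shape of the argument. Since $c_A$ sends the \emph{set} $\id_A\subset A^+$ into the set $\id_A\subset A$, the point-set functoriality of localization (Recollection~\ref{recollection. localizations of Aoo cats}) gives a strict, natural map $\eta''_A\colon\Aploc\to A[\id_A^{-1}]$ landing in the localization of $A$ rather than in $A$ itself. The paper therefore introduces the auxiliary endofunctor $\tau'(A):=A[\id_A^{-1}]$ and the zigzag of natural transformations $\id_{\Ainftystr}\xrightarrow{\eta'}\tau'\xleftarrow{\eta''}\tau\circ j$; each component is a quasi-equivalence (for $\eta''$ one uses that the composite $A[\id_A^{-1}]\to\Aploc\to A[\id_A^{-1}]$ is the identity while the first map is a quasi-equivalence by the length-filtration argument and Lemma~\ref{lemma. right inverse}), and Proposition~\ref{prop. good natural transformations induce homotopies} is applied to both legs. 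If you insist on a target that strictly receives both $A$ and $\Aploc$, you must pass to a quotient rather than to $A$: this is exactly what the appendix does with the ideal $\II$ generated by the differences of augmentation and strict units, replacing your $\tilde c_A$ by the quotient map $\Aploc\to\Aploc/\II$.
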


The following proof is due to Canonaco-Ornaghi-Stellari, who relayed it to me in private communication. We thank them for generously sharing their ideas and allowing us to include their proof, greatly simplifying the present exposition! The old, computational proof has been banished to the appendix, in case the computations and observations there are of use to others.

\begin{proof}
There is an endofunctor $\tau': \Ainftystr \to \Ainftystr$ sending $A$ to $A[\id_A^{-1}]$, and this is equipped with a natural transformation $\eta'$ via the obvious inclusions
	\eqnn
	\eta'_A: A \to A[\id_A^{-1}].
	\eqnnd
The same strategy as the proof of Lemma~\ref{lemma. right inverse} -- noting that the length 1 piece of $\hom_{A[\id_A^{-1}]}(X,Y)$ is equal to $\hom_A(X,Y)$, and that all length $l \geq 2$ filtered pieces are direct sums of acyclic complexes -- shows that $\eta'_A$ is a quasi-equivalence for all $A$. In particular, $\tau'$ respects quasi-equivalences and induces an endofunctor of $\Ainftystr[\eqs^{-1}]$. 

Because $\eta_A'$ factors the quasi-equivalence $A \to A^+[\id_A^{-1}]$ in Lemma~\ref{lemma. right inverse}, we conclude that the map
	\eqn\label{eqn. obvious equivalence}
	A[\id_A^{-1}] \to
	A^+[\id_A^{-1}]
	\eqnd
is also a quasi-equivalence.

On the other hand, we have functors $A \to A^+ \to A$ (the first map is a non-unital inclusion; the second, which identifies the strict and augmentation units, is strictly unital). These functors respect $\id_A$ as sets, so we have induced maps of localizations
	\eqnn
	A[\id_A^{-1}]
	\to
	A^+[\id_A^{-1}]
	\xrightarrow{\eta''_A}
	A[\id_A^{-1}].
	\eqnnd
Because the composition $A \to A^+ \to A$ is the identity functor of $A$, the above composition is the identify functor of $A[\id_A^{-1}]$ (hence a quasi-equivalence). The first map is a quasi-equivalence as already noted in~\eqref{eqn. obvious equivalence}. It follows that the second map $\eta''_A$ is a quasi-equivalence. The map $\eta''_A$ is natural in $A$, so we witness natural transformations
	\eqnn
	\xymatrix{
	\id_{\Ainftystr} \ar[r]^{\eta'}
		& \tau'
		& \tau \circ j \ar[l]_{\eta''}
	}
	\eqnnd
where $\eta'_A$ and $\eta''_A$ are quasi-equivalences for all $A$.
By Proposition~\ref{prop. good natural transformations induce homotopies}, the natural transformations induced by $\eta'$ and $\eta''$  are natural equivalences upon localizing along $\eqs$.

Thus, the functor induced by $\tau \circ j$ is naturally equivalent to the identity functor, and this exhibits the functor induced by $\tau$ as a left inverse to the functor induced by $j$.
\end{proof}

\begin{proof}[Proof of Theorem~\ref{theorem. main theorem}.]
The functor induced by $j$ admits a right inverse (Remark~\ref{remark. right inverse}) and a left inverse (Lemma~\ref{lemma. j has left inverse}) up to homotopy. 
\end{proof}

\section{Internal homs and mapping spaces} 
\label{section. interhal homs and maps}
The main results of this section -- Theorems~\ref{theorem. internal homs in dgcat},
\ref{theorem. maps in dgcat},
\ref{theorem. internal homs in Aoocat}, and
\ref{theorem. mapping spaces in Aoocat} -- rely on many structural results in the study of $A_\infty$-categories and in the study of $\infty$-categories. 
While basic knowledge in both areas is assumed, we include  proofs of some well-known results, together with references for filling in the details, to invite more readers into the fold.

The bulk of this section is occupied with the task of comparing structures arising from the theory of $\infty$-categories (which are completely formalism-driven) with known dg- and model-categorical constructions (which are formula- and point-set-driven). For example, we identify the monoidal structure in $\Ainftycat$ (Proposition~\ref{prop. monoidal structure is derived dg tensor}), thereby characterizing the internal $\underline{\hom}$ objects, and we identify the right adjoint to~\eqref{eqn. free dg category} (Proposition~\ref{prop. dg nerve is right adjoint}) thereby computing mapping spaces.

\subsection{The $\infty$-category of chain complexes}

\begin{prop}
\label{prop. cofibrant localization is full localization}
Let $C$ be a model category with functorial cofibrant resolutions -- i.e., a functor $Q: C \to C$ whose images are all cofibrant, equipped with a natural transformation $Q \to \id$ consisting of weak equivalences. We let $C^\circ \subset C$ be the full subcategory of cofibrant objects. Then the map of $\infty$-categorical localizations (along weak equivalences)
	\eqnn
	C^\circ[W^{-1}]
	\to
	C[W^{-1}]
	\eqnnd
is an equivalence of $\infty$-categories.
\end{prop}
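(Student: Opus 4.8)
The plan is to show that the inclusion $C^\circ \hookrightarrow C$ is a homotopical functor (sends weak equivalences to weak equivalences, which is automatic since the weak equivalences in $C^\circ$ are by definition those of $C$) and exhibit $Q$ as a homotopy inverse on the level of localizations. Concretely, $Q$ restricts to a functor $C \to C^\circ$ because all its values are cofibrant; call this $Q\colon C \to C^\circ$. Since $Q \to \id$ consists of weak equivalences, $Q$ preserves weak equivalences (by two-out-of-three), so $Q$ descends to a functor $\bar Q\colon C[W^{-1}] \to C^\circ[W^{-1}]$. I want to show $\bar Q$ is inverse to the localized inclusion $\bar\iota\colon C^\circ[W^{-1}] \to C[W^{-1}]$.

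The key input is exactly Proposition~\ref{prop. good natural transformations induce homotopies}: the natural transformation $q\colon Q \to \id_C$ consists of weak equivalences, hence defines a morphism $(\Delta^1 \times C, \Delta^1 \times W) \to (C, W)$ in $\winftyCat$, so after localization it becomes a natural equivalence $\LL(Q) \simeq \LL(\id_C) = \id_{C[W^{-1}]}$. This shows $\bar\iota \circ \bar Q \simeq \id_{C[W^{-1}]}$. For the other composite, restrict the natural transformation $q$ to the full subcategory $C^\circ$: this gives a natural transformation $Q|_{C^\circ} \to \id_{C^\circ}$ of functors $C^\circ \to C^\circ$ (note the source lands in $C^\circ$ since $Q$ takes cofibrant values, and the target does too), again consisting of weak equivalences. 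Applying Proposition~\ref{prop. good natural transformations induce homotopies} once more yields $\bar Q \circ \bar\iota \simeq \id_{C^\circ[W^{-1}]}$. Hence $\bar\iota$ is an equivalence with inverse $\bar Q$.

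The one point that needs care — and the main (minor) obstacle — is verifying that the composite $\Delta^1 \times C \xrightarrow{q} C$ really does send $\Delta^1 \times W$ into $W$ after passing to homotopy categories, i.e. that the hypothesis of Proposition~\ref{prop. good natural transformations induce homotopies} is met. This amounts to: for a weak equivalence $x \to y$ in $C$, the naturality square relating $Qx \to Qy$ and $x \to y$ via the components of $q$ has all four edges weak equivalences; the vertical edges $Qx \to x$, $Qy \to y$ are weak equivalences by hypothesis on $q$, the top edge $Qx \to Qy$ is a weak equivalence because $Q$ preserves weak equivalences, and the bottom edge $x \to y$ is one by assumption — and any edge of $\Delta^1 \times W$ maps to a composite/component of these. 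So the hypothesis holds, and no further work is needed. (One should also note that $C^\circ[W^{-1}]$ is formed with respect to the weak equivalences of $C$ restricted to $C^\circ$, which is the same convention under which the statement is to be read; this is a matter of unwinding definitions rather than a genuine difficulty.)

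Alternatively, if one prefers to avoid any appeal to the ambient $\infty$-categorical machinery of Recollection~\ref{recollection. localization respects products} beyond what is already proved, Proposition~\ref{prop. good natural transformations induce homotopies} has been stated in precisely the generality needed, so the proof above is essentially a two-line application of it in both directions. I expect the author's proof to be exactly this: observe $Q$ restricts to $C \to C^\circ$ and to $C^\circ \to C^\circ$, and twice invoke Proposition~\ref{prop. good natural transformations induce homotopies} on the natural transformation $Q \to \id$ to produce the two homotopies witnessing that inclusion and $Q$ are mutually inverse equivalences of localizations.
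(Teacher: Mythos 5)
Your proof is correct and is essentially identical to the paper's: the author likewise observes that $Q\circ i$ and $i\circ Q$ admit natural transformations to the respective identity functors consisting of weak equivalences, and concludes (via Proposition~\ref{prop. good natural transformations induce homotopies}) that the localized functors are mutually inverse. Your additional verification that the hypothesis of that proposition is satisfied is a correct unwinding of a step the paper leaves implicit.
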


\begin{proof}
Let $i: C^\circ \to C$ denote the inclusion. Then $Q \circ i$ enjoys a natural transformation to $\id_{C^\circ}$ consisting of weak equivalences. Likewise, $i \circ Q$ enjoys a natural transformation of weak equivalences to $\id_C$. Thus, upon passage to the localizations, the induced functors of $Q$ and $i$ are mutually inverse functors of $\infty$-categories.
\end{proof}

The following is a special case of Proposition~4.1.3.4 of~\cite{higher-algebra}, and Example~4.1.3.6 of ibid.

\begin{prop}[Localization is symmetric monoidal]
\label{prop. localization is symmetric monoidal}
Let $C$ be a monoidal category, and $W \subset C$ a class of morphisms containing identities. Further assume that $W \tensor W \subset W$. Then there exists a symmetric monoidal structure on the $\infty$-category $C[W^{-1}]$ for which
\enum[(i)]
\item the localization map of $\infty$-categories $C \to C[W^{-1}]$ has a natural promotion to a symmetric monoidal functor, and 
\item\label{item. universal property of symm monoidal localization} $C[W^{-1}]$ is universal for symmetric monoidal functors out of $C$ sending $W$ to equivalences. \enumd
Moreover, the assignment $(C,W) \mapsto C[W^{-1}]$ is itself symmetric monoidal. (In particular, it is functorial with respect to the direct product of pairs $(C,W)$ and maps $C \to C'$ sending $W$ to $W'$.)
\end{prop}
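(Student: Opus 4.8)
First, ``monoidal'' in the statement is to be read as ``symmetric monoidal'', consistently with the conclusion. With that understood, the plan is to view the pair $(C,W)$ as a commutative algebra object of the $\infty$-category $\winftyCat$ of Recollection~\ref{recollection. localization respects products}, for its cartesian symmetric monoidal structure, and then transport it across the localization functor $\LL$, which I would first promote to a symmetric monoidal functor. For the promotion: $\winftyCat$ and $\inftyCat$ both admit finite products and $\LL$ preserves them (Recollection~\ref{recollection. localization respects products}), and a finite-product-preserving functor between $\infty$-categories with finite products is canonically symmetric monoidal once both sides carry their cartesian structures (see Section~2.4.1 of~\cite{higher-algebra}). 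This promoted $\LL$ is already the ``moreover'' clause; it also gives, for any $\infty$-operad $\mathcal O$, an induced functor $\mathrm{Alg}_{\mathcal O}(\winftyCat)\to\mathrm{Alg}_{\mathcal O}(\inftyCat)$, in particular $\mathrm{CAlg}(\winftyCat)\to\mathrm{CAlg}(\inftyCat)$, where a commutative algebra object of $(\inftyCat)^\times$ is by definition a symmetric monoidal $\infty$-category.

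Next I would check that $(N(C),W)$ is a commutative algebra object of $(\winftyCat)^\times$. After replacing $W$ by the subcategory of $\ho N(C)=C$ generated by $W$ together with all isomorphisms — which affects neither $W\tensor W\subseteq W$ (using $\mathrm{id}\in W$, so that $\mathrm{id}_X\tensor w\in W\tensor W\subseteq W$) nor the localization, since the newly adjoined morphisms are already equivalences — we may assume $(N(C),W)$ is literally an object of $\winftyCat$. The symmetric monoidal bifunctor $\tensor\colon C\times C\to C$, the unit, and their coherence data equip $N(C)$ with a commutative algebra structure in $(\inftyCat)^\times$; the hypotheses $\mathrm{id}_{\mathbf 1}\in W$ and $W\tensor W\subseteq W$ say precisely that $\tensor$ and the unit are morphisms in $\winftyCat$, and all higher coherence morphisms are inherited, being implemented by the identity functor on underlying categories. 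Applying the symmetric monoidal $\LL$, the image $\LL(N(C),W)=C[W^{-1}]$ is a commutative algebra object of $(\inftyCat)^\times$, i.e.\ carries a symmetric monoidal structure; and the identity functor gives a morphism of commutative algebra objects $(N(C),\mathrm{iso})\to(N(C),W)$ whose image under $\LL$ is $N(C)$ with its original monoidal structure, so $\LL$ of it provides the symmetric monoidal promotion of the localization map $C\to C[W^{-1}]$. This gives (i).

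Finally, for (ii) I would read off the universal property from the one already enjoyed by $\LL$: for any symmetric monoidal $\infty$-category $\cD$, maps out of the commutative algebra object $C[W^{-1}]=\LL(N(C),W)$ identify $\fun^{\otimes}(C[W^{-1}],\cD)$ with the full subcategory of $\fun^{\otimes}(N(C),\cD)$ spanned by the symmetric monoidal functors that carry $W$ to equivalences. In this generality this is Proposition~4.1.3.4 of~\cite{higher-algebra} for the $\infty$-operad $E_\infty$, and the present statement is its specialization to $\cC=N(C)$, the nerve of a $1$-category, as in Example~4.1.3.6 of ibid. The main obstacle — and the reason to lean on the cited result rather than construct the structure by hand — is the coherence bookkeeping hidden in the claim that the associativity, unit, and symmetry constraints of $C$ all lift to $\winftyCat$ demanding nothing of $W$ beyond $W\tensor W\subseteq W$: written out directly this is the familiar infinite hierarchy of compatibilities, which the cited proposition handles once and for all.
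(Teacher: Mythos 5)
Your proposal is correct and follows essentially the same route as the paper, which simply records this statement as a special case of Proposition~4.1.3.4 and Example~4.1.3.6 of~\cite{higher-algebra}; your argument is an unpacking of that citation (promoting $\LL$ to a symmetric monoidal functor for the cartesian structures, viewing $(N(C),W)$ as a commutative algebra object of $\winftyCat$, and reading off the universal property from the cited proposition). The only additions beyond the paper's one-line justification are bookkeeping details -- enlarging $W$ to contain all isomorphisms and noting that ``monoidal'' should be read as ``symmetric monoidal'' -- both of which are sound.
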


\begin{corollary}
\label{corollary. symmetric monoidal model categories}
Let $C$ be a symmetric monoidal model category, meaning the monoidal unit is cofibrant, the monoidal structure is closed, and $\tensor: C \times C \to C$ is a (left) Quillen bifunctor. Then the localization map $C^\circ \to C^\circ[W^{-1}]$  (of cofibrant objects along weak equivalences) has a natural promotion to a symmetric monoidal functor. 
\end{corollary}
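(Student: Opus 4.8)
The plan is to deduce this directly from Proposition~\ref{prop. localization is symmetric monoidal}, applied not to $C$ but to the full subcategory $C^\circ$ of cofibrant objects, equipped with the restricted class $W^\circ := W \cap C^\circ$ of weak equivalences between cofibrant objects. So the task reduces to three verifications: that $C^\circ$ is a symmetric monoidal subcategory of $C$ (contains the unit, closed under $\tensor$), and that $W^\circ \tensor W^\circ \subseteq W^\circ$.

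First I would note that the monoidal unit $\mathbf{1}$ lies in $C^\circ$, which is exactly the hypothesis that $\mathbf{1}$ is cofibrant. Next, if $X,Y$ are cofibrant, I would apply the pushout-product (Quillen bifunctor) axiom to the two cofibrations $\emptyset \to X$ and $\emptyset \to Y$: their pushout-product is precisely the map $\emptyset \to X \tensor Y$, which is therefore a cofibration, so $X \tensor Y \in C^\circ$. Hence $C^\circ$, being a full subcategory closed under $\tensor$ and containing $\mathbf{1}$, inherits a (symmetric) monoidal structure from $C$ with all coherence data. For the condition $W^\circ \tensor W^\circ \subseteq W^\circ$ I would use the standard fact that a left Quillen bifunctor preserves weak equivalences between cofibrant objects in each variable: for fixed cofibrant $Y$, the functor $-\tensor Y$ carries acyclic cofibrations between cofibrant objects to acyclic cofibrations (again the pushout-product axiom, now with one input acyclic), so by Ken Brown's lemma $-\tensor Y$ sends every weak equivalence between cofibrant objects to a weak equivalence; symmetrically in the first variable. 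Given $f\colon X\to X'$ and $g\colon Y\to Y'$ in $W^\circ$, factoring $f\tensor g = (f\tensor Y')\circ(X\tensor g)$ as a composite of two weak equivalences shows $f\tensor g\in W^\circ$.

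With these points in hand, Proposition~\ref{prop. localization is symmetric monoidal} applied to $(C^\circ, W^\circ)$ equips $C^\circ[W^{-1}]$ with a symmetric monoidal structure for which the localization functor $C^\circ \to C^\circ[W^{-1}]$ is promoted to a symmetric monoidal functor, which is the assertion. The only step with any content is the Ken Brown-type argument that $\tensor$ descends to weak equivalences of cofibrant objects; the rest is an unwinding of the pushout-product axiom together with the cofibrancy of $\mathbf{1}$. (If desired, one can additionally invoke Proposition~\ref{prop. cofibrant localization is full localization} to transport this structure to $C[W^{-1}]$, but that is not required for the statement as phrased.)
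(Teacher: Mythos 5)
Your proposal is correct and matches the paper's (implicit) argument: the corollary is stated without proof precisely because it follows from Proposition~\ref{prop. localization is symmetric monoidal} applied to $(C^\circ, W\cap C^\circ)$, with the pushout-product axiom guaranteeing that $C^\circ$ is a monoidal subcategory and the Ken Brown argument guaranteeing $W^\circ \tensor W^\circ \subseteq W^\circ$ --- exactly the checks you carry out (and which the paper itself rehearses in the special case of Example~\ref{example. symm mon maps of chain}). No gaps.
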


\begin{remark}
To be absolutely innocent of abuse, one should technically write $N(C^\circ) \to C^\circ[W^{-1}]$ for the localization map -- as $C^\circ$ is a category, and $N(C^\circ)$ is the nerve (thereby rendering $C^\circ$ an $\infty$-category). We note that if $K$ is a symmetric monoidal category, then $N(K)$ is a symmetric monoidal $\infty$-category (Example 2.1.2.21 of~\cite{higher-algebra}).
\end{remark}

\begin{example}
\label{example. symm mon maps of chain}
Let $\chain$ denote the 1-category of (possibly unbounded) chain complexes over our commutative ring $\kk$.
We endow $\chain$ with the projective model structure.\footnote{See Hovey's book on model categories~\cite{hovey-model-categories}, Proposition~4.2.13. Though not every projective complex is cofibrant, it is true that every cofibrant object is a complex of projectives.} It is well-known that $\chain$ is a symmetric monoidal model category with functorial cofibrant replacements (because $\chain$ is combinatorial, for example). Then:
	\enum[(a)]
	\item Letting $\chain^\circ$ denote the full subcategory of fibrant and cofibrant objects, the induced functor
		\eqnn
		\chain^\circ[\quis^{-1}]
		\to\chain[\quis^{-1}]
		\eqnnd
	(where we have localized along quasi-isomorphisms) is an equivalence of $\infty$-categories (Proposition~\ref{prop. cofibrant localization is full localization}).
	\item $\chain$ is symmetric monoidal under the usual tensor product $\tensor = \tensor_{\kk}$, and the tensor product of two cofibrant chain complexes is cofibrant, while preserving quasi-isomorphisms of cofibrant chain complexes. By Proposition~\ref{prop. localization is symmetric monoidal}, there is an induced symmetric monoidal structure on the localization, and the functor
		\eqnn
		\chain^\circ \to \chain^\circ[\quis^{-1}]
		\eqnnd
	is a symmetric monoidal functor of symmetric monoidal $\infty$-categories.
	\enumd
\end{example}

In fact, there is a larger class of chain complexes than the cofibrant chain complexes that is useful to pick out: The homotopically projective complexes (see below). We do not know whether there is a model structure on (possibly unbounded) chain complexes for which the homotopically projective objects are precisely the cofibrant objects.

\begin{defn}\label{defn. homotopically projective}
Recall that a chain complex of $\kk$-modules $M$ is {\em homotopically projective} if $H^*(N)\cong 0 \implies H^*\hom(M,N) \cong 0$. $M$ is called {\em homotopically flat} if $H^*(N) \cong 0 \implies H^*(M \tensor_{\kk} N) \cong 0$.
\end{defn}

\begin{remark}
\label{remark. good properties of h projectives}
By the tensor-hom adjunction, it is clear that the tensor product of homotopically projective complexes is homotopically projective. 
It is a result of Spaltenstein that every homotopically projective complex is homotopically flat.\footnote{Proposition 5.8 of~\cite{spaltenstein-resolutions}. Note that Spaltenstein uses the term $K$-flat and $K$-projective, while we follow the terminology of Drinfeld~\cite{drinfeld-dg-quotients}.} Moreover, every cofibrant chain complex (in the projective model structure) is homotopically projective (Lemma~2.3.8 of~\cite{hovey-model-categories}).
\end{remark}

\begin{prop}[Whitehead's Theorem for homotopically projective objects]
\label{prop. whitehead for projectives}
Let $X$ and $Y$ be homotopically projective chain complexes over $\kk$. If $f: X \to Y$ is a quasi-isomorphism, then $f$ admits an inverse chain map up to homotopy.
\end{prop}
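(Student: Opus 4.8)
The plan is to reduce Whitehead's theorem to the standard statement that a quasi-isomorphism between cofibrant objects (in the projective model structure on $\chain$) is a chain homotopy equivalence, by exploiting the defining property of homotopically projective complexes directly.

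First I would recall the key consequence of homotopic projectivity: if $M$ is homotopically projective and $g: N \to N'$ is a quasi-isomorphism, then the induced map $\hom(M,N) \to \hom(M,N')$ is a quasi-isomorphism. This follows by applying the definition to $\cone(g)$, which is acyclic, together with the fact that $\hom(M,-)$ takes the cone of $g$ to (a shift of) the cone of $\hom(M,g)$ --- exactly the ``morphisms between cones are cones'' principle recorded earlier in the excerpt, applied now in the dg-category $\chain$. Hence $H^\ast\hom(M, \cone(g)) \cong 0$, so $\hom(M,g)$ is a quasi-isomorphism.

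Next, given the quasi-isomorphism $f: X \to Y$ between homotopically projective complexes, I would argue that $f$ is a homotopy equivalence by a two-sided inverse construction. Apply the observation above with $M = Y$ and $g = f$: the map $f_\ast: \hom(Y,X) \to \hom(Y,Y)$ is a quasi-isomorphism, so in particular it is surjective on $H^0$. Thus there is a closed degree-zero map $h: Y \to X$ with $[f \circ h] = [\id_Y]$ in $H^0\hom(Y,Y)$, i.e. $f \circ h \sim \id_Y$. Then $h$ is itself a quasi-isomorphism (since $f$ and $f\circ h$ are), and we repeat: apply the observation with $M = X$ and $g = h$ (a quasi-isomorphism between homotopically projective complexes, $X$ being homotopically projective and $Y$ being so too, so $\hom(X,h): \hom(X,Y)\to\hom(X,X)$ is a quasi-isomorphism) to obtain $h' : X \to Y$ with $h \circ h' \sim \id_X$. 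The usual algebra then forces $h' \sim f$ and $h \circ f \sim \id_X$: indeed $h' \sim (f\circ h)\circ h' \sim f \circ (h \circ h') \sim f$ up to chain homotopy (using that composition descends to $H^0$), so $h \circ f \sim h \circ h' \sim \id_X$. Hence $h$ is a two-sided homotopy inverse to $f$.

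I do not expect a serious obstacle here; the only point requiring care is the bookkeeping that the candidate inverse $h$ is again a quasi-isomorphism before invoking the defining property a second time --- this is what makes the symmetric roles of $X$ and $Y$ usable --- and the standard two-out-of-three / ``one-sided inverse that is itself a weak equivalence is a two-sided inverse'' manipulation at the level of $H^0$. One could alternatively phrase the whole argument as: homotopically projective complexes are precisely the cofibrant objects of the chain-homotopy category viewed abstractly, so this is Whitehead in $H^0(\chain)$; but the direct argument above is cleaner and self-contained given what is already available in the excerpt.
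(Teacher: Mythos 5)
Your proof is correct and follows essentially the same route as the paper's: produce a homotopy right inverse $h$ to $f$ using homotopic projectivity of $Y$, observe $h$ is again a quasi-isomorphism, produce a homotopy right inverse to $h$ using homotopic projectivity of $X$, and conclude by uniqueness of two-sided inverses. The only difference is that the paper outsources the existence of the homotopy right inverse to Proposition~1.4 of Spaltenstein, whereas you derive it self-containedly from the acyclicity of $\hom(M,\cone(g))$; that cone argument is valid and makes the proof more transparent.
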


\begin{proof}
Because $Y$ is homotopically projective, there exists a homotopy right inverse $g: Y \to X$ -- so $fg$ is chain homotopic to $\id_Y$ (Proposition~1.4 of~\cite{spaltenstein-resolutions}). It follows that $g$ is a quasi-isomorphism.  Because $X$ is homotopically projective, $g$ admits a homotopy right inverse. By uniqueness of 2-sided inverses, we see that $f$ and $g$ are mutually homotopy-inverse chain maps.

\end{proof}

\begin{prop}
\label{prop. homproj and homcof are equivalent}
Let $\chain^{\homproj} \subset \chain$ denote the full subcategory consisting of homotopically projective chain complexes. Then the symmetric monoidal inclusion
	$
	\chain^{\circ} \to \chain^{\homproj}
	$
induces a symmetric monoidal equivalences of $\infty$-categories
	\eqnn
	\chain^{\circ}[\quis^{-1}] \to \chain^{\homproj}[\quis^{-1}].
	\eqnnd
\end{prop}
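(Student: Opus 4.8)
The plan is to imitate the proof of Proposition~\ref{prop. cofibrant localization is full localization}, even though $\chain^{\homproj}$ is not (as far as we know) the subcategory of cofibrant objects for any model structure on unbounded complexes. I would first establish that the inclusion induces an equivalence of underlying $\infty$-categories, and then observe that a symmetric monoidal functor which is an equivalence of underlying $\infty$-categories is automatically a symmetric monoidal equivalence, so the statement follows once the symmetric monoidal structures are in place.

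\textbf{Underlying equivalence.} Because $\chain$ with the projective model structure is combinatorial, it has a functorial cofibrant replacement: a functor $Q\colon \chain \to \chain$ landing in cofibrant objects together with a natural transformation $\epsilon\colon Q \Rightarrow \id_\chain$ all of whose components are quasi-isomorphisms. By two-out-of-three, $Q$ preserves quasi-isomorphisms. Since every cofibrant complex is homotopically projective (Remark~\ref{remark. good properties of h projectives}), $Q$ restricts to a functor $\chain^{\homproj} \to \chain^{\circ}$, and $\epsilon$ restricts to a natural quasi-isomorphism there. Writing $i\colon \chain^{\circ} \into \chain^{\homproj}$ for the inclusion, the composites $i\circ Q$ and $Q\circ i$ each carry a natural transformation of quasi-isomorphisms to the respective identity functor. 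Exactly as in the proof of Proposition~\ref{prop. cofibrant localization is full localization}, passing to localizations along $\quis$ makes $\LL i$ and $\LL Q$ mutually inverse equivalences of $\infty$-categories.

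\textbf{Symmetric monoidal structures.} I would then check the hypotheses needed to apply Proposition~\ref{prop. localization is symmetric monoidal} to $(\chain^{\homproj},\quis)$: the unit $\kk$ is homotopically projective, the tensor product of homotopically projective complexes is homotopically projective (Remark~\ref{remark. good properties of h projectives}), and $\quis \tensor \quis \subset \quis$ within $\chain^{\homproj}$ --- the last point because, by Spaltenstein (Remark~\ref{remark. good properties of h projectives}), a homotopically projective complex is homotopically flat, so tensoring a quasi-isomorphism between homotopically projective complexes against another such complex is again a quasi-isomorphism. Proposition~\ref{prop. localization is symmetric monoidal} then equips $\chain^{\homproj}[\quis^{-1}]$ with a symmetric monoidal structure and, by its functoriality with respect to monoidal functors sending $\quis$ to $\quis$, promotes the symmetric monoidal inclusion $\chain^{\circ}\into\chain^{\homproj}$ to a symmetric monoidal functor $\chain^{\circ}[\quis^{-1}] \to \chain^{\homproj}[\quis^{-1}]$. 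This is the functor $\LL i$ from the previous paragraph; being symmetric monoidal and an equivalence of underlying $\infty$-categories, it is a symmetric monoidal equivalence (its inverse inherits a canonical symmetric monoidal structure), which is the assertion.

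\textbf{Main obstacle.} The only non-formal point is precisely the one that forces the hand-argument above in place of a direct citation of Proposition~\ref{prop. cofibrant localization is full localization}, namely that homotopically projective complexes do not (visibly) arise as cofibrant objects of a model structure on unbounded complexes; once Spaltenstein's flatness statement and the cited localization propositions are granted, the remaining verifications (closure of $\chain^{\homproj}$ under $\tensor$, the inclusion $\quis\tensor\quis\subset\quis$, and the upgrade of the underlying equivalence to a symmetric monoidal one) are routine.
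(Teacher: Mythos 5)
Your proposal is correct and follows essentially the same route as the paper: the paper's proof localizes the chain $\chain^{\circ} \subset \chain^{\homproj} \subset \chain \xrightarrow{Q} \chain^{\circ}$ and invokes Proposition~\ref{prop. cofibrant localization is full localization}, which is exactly your argument of restricting the functorial cofibrant replacement $Q$ (with its natural quasi-isomorphism to the identity) to $\chain^{\homproj}$ to produce a two-sided inverse after localization, and then applies Proposition~\ref{prop. localization is symmetric monoidal} for the monoidal enhancement. Your explicit verification of the hypotheses of Proposition~\ref{prop. localization is symmetric monoidal} for $(\chain^{\homproj},\quis)$ (closure under $\tensor$, homotopical flatness giving $\quis\tensor\quis\subset\quis$) is a welcome bit of detail that the paper leaves implicit.
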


\begin{proof}
Localize the functors $\chain^{\circ} \subset \chain^{\homproj} \subset \chain \xrightarrow{Q} \chain^{\circ}$ along $\quis$ and use Proposition~\ref{prop. cofibrant localization is full localization}. Apply Proposition~\ref{prop. localization is symmetric monoidal} for the symmetric monoidal enhancement.
\end{proof}

\begin{defn}
\label{defn. oo cat of chain}
By the {\em $\infty$-category of chain complexes} over $\kk$, we mean any of the equivalent $\infty$-categorical localizations
	\eqnn
	\chain^\circ[\quis^{-1}]\simeq
	\chain^{\homproj}[\quis^{-1}]\simeq
	\chain[\quis^{-1}]
	\eqnnd
where $\chain$ is the 1-category of (possibly unbounded) $\kk$-linear chain complexes, and $\quis$ is the collection of quasi-isomorphisms. 

By the {\em symmetric monoidal} $\infty$-category of chain complexes over $\kk$, we mean any of the above endowed with the symmetric monoidal structure induced by Proposition~\ref{prop. localization is symmetric monoidal}. (We caution we do not make use of the proposition for $\chain[\quis^{-1}]$. Instead, we must pushforward/pullback the symmetric monoidal structure to $\chain[\quis^{-1}]$ along any of the above equivalences.)
\end{defn}

\subsection{Categories of dg-categories}

\begin{notation}
\label{notation. dgcat versions}
Fixing our base ring $\kk$, we let $\dgcatt$ denote the usual category of $\kk$-linear dg-categories. 
We have full subcategories 
	\eqnn
	\dgcatt^\circ \subset \dgcatt_{\homcof} \subset \dgcatt_{\homproj} \subset \dgcatt
	\eqnnd
where 
\begin{itemize}
	\item $\dgcatt^{\circ}$ consists of the cofibrant dg-categories with respect to the Tabuada model strcuture.
	\item $\dgcatt_{\homcof}$ consists of dg-categories whose morphism complexes are cofibrant chain complexes in the projective model structure for $\chain$. (All cofibrant dg-categories have cofibrant morphism complexes by Proposition~2.3(3) of~\cite{toen-homotopy-theory-of-dg-cats}.) Finally,
	\item $\dgcatt_{\homproj}$ consists of dg-categories whose morphism complexes are homotopically projective. (All cofibrant chain complexes are homotopically projective by Remark~\ref{remark. good properties of h projectives}.)
\end{itemize}
\end{notation}

\begin{remark}
\label{remark. tensor of homcof are homcof}
$\dgcatt_{\homcof}$ and $\dgcatt_{\homproj}$ have the pleasant property that if $f$ and $g$ are functors in these categories and are quasi-equivalences, then $g \tensor f$ is also a quasi-equivalence. (This is because homotopically projective complexes are homotopically flat.) Moreover, the class of cofibrant chain complexes is closed under $\tensor$, as is the class of homotopically projective chain complexes -- so $\dgcatt_{\homcof}$ and $\dgcatt_{\homproj}$ are a symmetric monoidal subcategory of $\dgcatt$. (This last property is not enjoyed  by $\dgcatt^{\circ}$.) 
\end{remark}

\subsection{Enrichments}
\begin{notation}[$\algcat$]
For a monoidal $\infty$-category $\cV$, one has an $\infty$-category 
	\eqnn
	\algcat(\cV)
	\eqnnd
defined by Gepner-Haugseng. Informally, $\algcat(\cV)$ is an $\infty$-category where an object is the data of 
\enum[(i)]
	\item  a space $X$ of objects, 
	\item a map from $X \times X$ to the space of objects of $\cV$ -- in particular, for all $x,y \in X$, an object $\hom(x,y) \in \cV$ -- and 
	\item coherence data for a composition operation.\footnote{See the discussion after Definition~2.3.4 for a non-infinity-categorical example. A near-geodesic path of reading for the $\infty$-categorical case is: from Definition~4.3.1, see Corollary 4.2.8, then Definition~4.2.4. See also (the paragraph preceding) Definition~2.4.5, and Remark~2.4.7, of ibid.} 
\enumd 
A morphism in $\algcat(\cV)$ is a map of object spaces along with data specifying coherent compatibilities of the  composition operations. 
We refer to Definition~4.3.1 of~\cite{gepner-haugseng} for details.
	\eqnn
	\algcat(\cV)_{\Set}
	\subset
	\algcat(\cV)
	\eqnnd
is the full subcategory of $\algcat(\cV)$ whose spaces of objects are discrete (see Theorem 5.3.17 of~\cite{gepner-haugseng}). That is, an object of $\algcat(\cV)_{\Set}$ has a space of objects whose connected components are all contractible.

The assignment $\cV \mapsto \algcat(\cV)_{\Set} \subset \algcat(\cV)$ is functorial with respect to monoidal functors in the $\cV$ variable -- in fact, with respect to {\em lax} monoidal functors in the $\cV$ variable (Lemma 4.3.9 of~\cite{gepner-haugseng}).
\end{notation}

\begin{example}[dg-categories]
\label{example. usual enrichments of 1-categories}
If $\cV$ is (the nerve of) an ordinary category, then every object of $\algcat(\cV)_{\Set}$ is isomorphic to a $\cV$-enriched category in the usual sense. Importantly, the only equivalences in $\algcat(\cV)_{\Set}$ are enriched functors that induce a bijection on the set of objects, and an isomorphism on the morphism objects. 

As a sub-example, if $\cV=\chain$ is the usual category of chain complexes over $\kk$, $\algcat(\chain)_{\Set}$ is equivalent as an $\infty$-category to the usual category of dg-categories over $\kk$. A morphism in $\algcat(\chain)_{\Set}$ is an equivalence if and only if it is an isomorphism of dg-categories in the usual sense. In particular, quasi-equivalences are not invertible in $\algcat(\chain)_{\Set}$.
\end{example}

\begin{remark}[From $\algcat$ to enriched $\infty$-categories]
\label{remark. defn of enriched oo cats}
Let $\cV$ be a monoidal $\infty$-category.
In $\algcat(\cV)$, one can give sensible definitions of what it means for a morphism to be essentially surjective, or to be fully faithful (Section~5.3 of ibid). As already noted, essentially surjective and fully faithful functors are not typically invertible in $\algcat(\cV)$.
In this sense, $\algcat(\cV)$ is a precursor to the $\infty$-category of $\cV$-enriched $\infty$-categories. 

When $\cV$ is presentably monoidal (Definition~3.1.24 of~\cite{gepner-haugseng}) the $\infty$-category of $\cV$-enriched $\infty$-categories is obtained by localizing $\algcat(\cV)$ along the essentially surjective and fully faithful functors:\footnote{Strictly speaking, this is only true when $\cV$ is presentably monoidal -- see Corollary~5.6.4 of~\cite{gepner-haugseng}.}
	\eqn\label{eqn. two presentations for enriched cats}
	\inftyCat^{\cV} := \algcat(\cV)[\FFES^{-1}]
	\simeq  \algcat(\cV)_{\Set}[\FFES^{-1}].\footnote{In this work, we have presented this as a definition. In~\cite{gepner-haugseng}, the $\infty$-category of $\cV$-enriched $\infty$-categories is instead defined as a full subcategory of $\algcat(\cV)$ (thereby simplifying computations of mapping spaces). When $\cV$ is presentably monoidal, Proposition 5.4.4, Proposition~5.4.2, and Corollary~5.6.3 of ibid. show that the two characterizations are equivalent.}
	\eqnd
Note the equivalence: It is a theorem that localizing $\algcat(\cV)_{\Set}$ by the fully faithful and essentially surjective functors recovers the localization of $\algcat(\cV)$ along the fully faithful and essentially surjective functors (Theorem 5.3.17 of~ibid.).
\end{remark}

\begin{example}
\label{example. infinity categories as enriched}
Let $\spaces$ denote the $\infty$-category of spaces, for example modeled as the $\infty$-category of Kan complexes. We take direct product to be the symmetric monoidal structure. Then $\inftyCat^{\spaces}$ is equivalent to the $\infty$-category of $\infty$-categories.  See for example Section~4.4 of~\cite{gepner-haugseng} and Theorem 5.4.6 of ibid.
\end{example}

\subsection{Equivalent models of $\dgcat$ and $\Ainftycat$}
\label{section. equivalent dg cat models}
We have three equivalent models for the $\infty$-category of chain complexes (Definition~\ref{defn. oo cat of chain}). Accordingly:

\begin{notation}
Fix a commutative ring $\kk$. We define
	\eqnn
	\dgcat
	\eqnnd
to be any of the equivalent $\infty$-categories
	\begin{itemize}
	\item $\algcat(\chain^\circ[\quis^{-1}])_{\Set}[\FFES^{-1}]$
	\item $\algcat(\chain^{\homproj}[\quis^{-1}])_{\Set}[\FFES^{-1}]$, or
	\item $\algcat(\chain[\quis^{-1}])_{\Set}[\FFES^{-1}]$.
	\end{itemize}
We refer to $\dgcat$ as the $\infty$-category of dg-categories. 
\end{notation}

\begin{remark}
If $\cV$ is the $\infty$-category of chain complexes (Definition~\ref{defn. oo cat of chain}), $\cV$ is presentably symmetric monoidal. 
So by Remark~\ref{remark. defn of enriched oo cats}, $\dgcat$ is by definition the $\infty$-category of $\infty$-categories enriched over (the $\infty$-category of) chain complexes.   
\end{remark}

\begin{remark}
We will often prefer to model $\dgcat$ using $\chain^\circ$ and $	\chain^{\homproj}
	$
for the reason that the symmetric monoidal structure is directly induced by localization, and does not need to be defined by pulling back along equivalences of $\infty$-categories.
\end{remark}

We will now construct\footnote{A more general construction is given in Definition~5.1 of~\cite{haugseng-rectification-enriched}. For the reader's benefit, we note a notational subtley of ibid. : In the paragraph before Definition~3.2  of ibid, the author makes clear that ${\bf V}[W^{-1}]$ refers to ${\bf V}^\circ[W^{-1}]$ -- the localization of the subcategory of cofibrant objects.} a natural comparison map
	\eqn\label{eqn. comparison map}
	\dgcatt[\eqs^{-1}] \to \dgcat.
	\eqnd
For brevity we will use the model 
	$\dgcat = 
	\algcat(\chain^\circ[\quis^{-1}])_{\Set}[\FFES^{-1}]
	$
though the model using
	$
	\chain^{\homproj}
	$
works equally well.
	
\begin{construction}[The map \eqref{eqn. comparison map}]
By Example~\ref{example. symm mon maps of chain},
one has symmetric monoidal functors of $\infty$-categories
	\eqn\label{eqn. chain complexes and functors}
	\chain^\circ[\quis^{-1}] \leftarrow \chain^\circ \to \chain
	\eqnd
where we treat the ordinary categories $\chain$ and $\chain^\circ$ as $\infty$-categories (by, say, taking their nerves). This induces the top row in the following diagram of $\infty$-categories:
	\eqn\label{eqn. big diagram of dg cats}
	\xymatrix{
	\algcat(\chain^\circ[\quis^{-1}])_{\Set}   
	& \algcat(\chain^\circ)_{\Set} \ar[l]\ar[r] 
	& \algcat(\chain)_{\Set} 
	\\
		& \dgcatt_{\homcof} \ar[r] \ar[u]_{\simeq}
		& \dgcatt \ar[u]_{\simeq} 
	\\
		& \dgcatt^{\circ} \ar[u] \ar[ur]
	}.
	\eqnd
Because $\chain$ and $\chain^\circ$ are ordinary categories, the topmost vertical arrows are equivalences (Example~\ref{example. usual enrichments of 1-categories}). 
Because cofibrant dg-categories have cofibrant mapping complexes,\footnote{Proposition~2.3(3) of~\cite{toen-homotopy-theory-of-dg-cats}. A more general result for model categories that are not $\chain$ is invoked in Corollary 3.15 of~\cite{haugseng-rectification-enriched}.} $\dgcatt^\circ$ is a (full) subcategory of $\dgcatt_{\homcof}$. Composition of the leftmost arrows in~\eqref{eqn. big diagram of dg cats} thus induce a map
	\eqnn
	\dgcatt^\circ \to 
	\algcat(\chain^\circ[\quis^{-1}])_{\Set}.
	\eqnnd
Further, if a functor  of dg-categories is a quasi-equivalence, it is essentially surjective and fully faithful as a morphism in $\algcat(\chain^\circ[\quis^{-1}])_{\Set}$. Thus, \eqref{eqn. big diagram of dg cats} passes to localizations:
	\eqn\label{eqn. big diagram of oo cats of dg cats}
	\xymatrix{
	\dgcat   
	& \algcat(\chain^\circ)_{\Set}[\FFES^{-1}]  \ar[l]_-{\tensor} \ar[r]_{\simeq} 
	& \algcat(\chain)_{\Set}[\FFES^{-1}]  
	\\
		& \dgcatt_{\homcof}[\eqs^{-1}]  \ar[r]_{\simeq} \ar[u]_{\simeq}^{\tensor}
		& \dgcatt[\eqs^{-1}]  \ar[u]_{\simeq} 
	\\
		& \dgcatt^{\circ}[\eqs^{-1}]  \ar[u]_{\simeq} \ar[ur]_{\simeq}
	}
	\eqnd
(To justify that the topmost vertical arrows are still equivalences, we note that the $\FFES$ maps in $\algcat(\chain)_{\Set}$ are quasi-equivalences.) 
Any of the obvious composition of morphisms in the above diagram determines (up to homotopy) the desired map~\eqref{eqn. comparison map}.
\end{construction}

\begin{remark}
There are more equivalences in~\eqref{eqn. big diagram of oo cats of dg cats} than in~\eqref{eqn. big diagram of dg cats}. Let us explain why.
Because $\dgcatt$ has a cofibrant replacement functor, Proposition~\ref{prop. cofibrant localization is full localization} allows us to find a homotopy inverse to the map $\dgcat^\circ[\quis^{-1}] \to \dgcat[\quis^{-1}]$. In fact, the entire bottom triangle in~\eqref{eqn. big diagram of dg cats} becomes a diagram of equivalences of $\infty$-categories as we explain in Remark~\ref{remark. homcof equivalence}. 
\end{remark}

\begin{remark}
\label{remark. homcof equivalence}
Because, after localization along quasi-equivalences, cofibrant replacement is naturally equivalent to the identity functor, we have a diagram of $\infty$-categories
	\eqnn
	\xymatrix{
	\dgcatt_{\homcof}[\eqs^{-1}] \ar[d] \ar[drr]^-{\id}
	\\
	\dgcatt[\eqs^{-1}] \ar[r]^-Q_-{\simeq} 
		& \dgcatt^\circ[\eqs^{-1}]	\ar[r]
		& \dgcatt_{\homcof}[\eqs^{-1}].
	}
	\eqnnd
Together with the equivalence~\eqref{eqn. comparison map}, we see that the map $\dgcatt[\eqs^{-1}] \to \dgcatt_{\homcof}[\eqs^{-1}]$ admits both a left and a right inverse. Thus, the natural cofibrant replacement map induces an equivalence 
	\eqnn
	\dgcatt[\eqs^{-1}] \to \dgcatt_{\homcof}[\eqs^{-1}].
	\eqnnd
A similar argument shows that the induced functors to/from $\dgcatt_{\homproj}[\eqs^{-1}]$ are also equivalences.
\end{remark}

The following result is due to Haugseng. We refer to his work for details.
\begin{theorem}[Corollary 5.7 of~\cite{haugseng-rectification-enriched}]
\label{theorem. haugseng rectification}
The map~\eqref{eqn. comparison map} 
is an equivalence of $\infty$-categories.
\end{theorem}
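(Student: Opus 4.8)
This is Haugseng's theorem, proved there for enriched $\infty$-categories over an arbitrary combinatorial symmetric monoidal model category; the plan is to specialize to $\chain=\chain_\kk$ with the projective model structure, peel off the equivalences already recorded in~\eqref{eqn. big diagram of oo cats of dg cats} and Remark~\ref{remark. homcof equivalence}, and isolate the one step of genuine content. Inspecting~\eqref{eqn. big diagram of oo cats of dg cats}, every functor there is already known to be an equivalence except the one labelled $\otimes$, namely
\eqnn
\algcat(\chain^\circ)_{\Set}[\FFES^{-1}] \xrightarrow{\ \otimes\ } \algcat(\chain^\circ[\quis^{-1}])_{\Set}[\FFES^{-1}] = \dgcat,
\eqnnd
induced by the symmetric monoidal localization $\chain^\circ\to\chain^\circ[\quis^{-1}]$. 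So it suffices to show this $\otimes$ is an equivalence. It is compatible with the $\FFES$-localizations of both sides, and by Theorem~5.3.17 of~\cite{gepner-haugseng} the $\FFES$-localization of $\algcat(-)_{\Set}$ may be computed after first inverting those dg-functors that are bijective on objects and a quasi-isomorphism on each morphism complex. Hence it is enough to prove that the localization functor $\chain^\circ\to\chain^\circ[\quis^{-1}]$ induces an equivalence between $\algcat(\chain^\circ)_{\Set}$, localized along these ``levelwise'' quasi-equivalences, and $\algcat(\chain^\circ[\quis^{-1}])_{\Set}$.

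Next I would fix the set of objects. Both $\infty$-categories above project to the category of sets via the object-set functor, compatibly with the comparison functor, so it suffices to establish the equivalence on each fibre (and then check compatibility with the projections). For a fixed set $S$, unwinding the definition of enriched $\infty$-category identifies the objects with object set $S$ on the target side with algebras, in the symmetric monoidal $\infty$-category $\chain^\circ[\quis^{-1}]$, over the \emph{non-symmetric} $(S\times S)$-coloured operad $\cO_S$ whose $n$-ary operations encode the composition maps $\hom(s_{n-1},s_n)\tensor\cdots\tensor\hom(s_0,s_1)\to\hom(s_0,s_n)$ (cf.\ Section~4 of~\cite{gepner-haugseng}); the matching piece on the source side is the localization of the ordinary category $\operatorname{Alg}_{\cO_S}(\chain^\circ)$ along levelwise quasi-isomorphisms. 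So the claim reduces, compatibly in $S$, to showing that the canonical comparison
\eqnn
\operatorname{Alg}_{\cO_S}(\chain^\circ)[\quis^{-1}] \xrightarrow{\ \simeq\ } \operatorname{Alg}_{\cO_S}(\chain^\circ[\quis^{-1}])
\eqnnd
is an equivalence of $\infty$-categories.

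The remaining input is rectification of $\cO_S$-algebras. The model category $\chain_\kk$ is combinatorial, its monoidal unit $\kk$ is cofibrant, it satisfies the monoid axiom, and -- by Remark~\ref{remark. good properties of h projectives} -- the tensor product of a cofibrant complex with a quasi-isomorphism is again a quasi-isomorphism. It follows that $\cO_S$ is admissible: $\operatorname{Alg}_{\cO_S}(\chain)$ carries a combinatorial model structure with weak equivalences and fibrations created by the forgetful functor to $\prod_{S\times S}\chain$, cofibrant algebras have cofibrant underlying complexes (since the unit is cofibrant), and the standard strict-to-$\infty$ comparison for algebras over a non-symmetric operad (Lurie, \cite{higher-algebra}, Section~4.1; see also \cite{haugseng-rectification-enriched}) identifies the localization of this model category with $\operatorname{Alg}_{\cO_S}(\chain[\quis^{-1}])$. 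Restricting to algebras with cofibrant underlying complexes does not change the localization, by the cofibrant-replacement argument of Proposition~\ref{prop. cofibrant localization is full localization} applied to $\operatorname{Alg}_{\cO_S}(\chain)$; and one may pass between cofibrant and homotopically projective morphism complexes exactly as in Remark~\ref{remark. homcof equivalence}. Reassembling these equivalences over all $S$ then produces the $\otimes$-functor above, completing the proof.

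The main obstacle is this admissibility-and-rectification step, and it is precisely there that an arbitrary commutative $\kk$ -- as opposed to a field -- is felt: one needs unbounded $\kk$-complexes well enough behaved (combinatoriality together with the homotopy-flatness of cofibrant complexes, Remark~\ref{remark. good properties of h projectives}) both to produce the model structure on $\operatorname{Alg}_{\cO_S}(\chain)$ and to know that its localization matches $\operatorname{Alg}_{\cO_S}(\chain[\quis^{-1}])$. Over a field this is classical; over a general ring it is exactly what is established in~\cite{haugseng-rectification-enriched}. A secondary, bookkeeping-flavoured difficulty is making the fibrewise equivalences coherent -- compatible with the functoriality in the object set, so that they glue to an equivalence of the ambient fibrations rather than merely holding object-by-object; this too is handled in~\cite{haugseng-rectification-enriched}, to which I defer for the details.
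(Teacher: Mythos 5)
The paper offers no proof of this statement at all: it is quoted verbatim as Corollary 5.7 of Haugseng's rectification paper and used as a black box. Your sketch is an accurate outline of what actually happens inside that citation (reduction to the single $\tensor$-labelled arrow, fibring over the object set, and rectification of algebras over the many-object associative operad in $\chain_\kk$), and since you too ultimately defer to Haugseng for the admissibility/rectification step and the coherence over varying object sets, your route is essentially the same as the paper's.
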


As a result, every arrow in~\eqref{eqn. big diagram of oo cats of dg cats} is an equivalence of $\infty$-categories. So each of the $\infty$-categories appearing in~\eqref{eqn. big diagram of oo cats of dg cats} is equivalent to $\dgcat$.

On the other hand, it is a result of Pascaleff~\cite{pascaleff} and of Canonaco-Ornaghi-Stellari~\cite{cos-over-rings} that the natural inclusion $\dgcatt \to \Ainftystr$ induces an equivalence of $\infty$-categories
	\eqnn
	\dgcat:= \dgcatt[\eqs^{-1}] \to \Ainftystr[\eqs^{-1}] =: \Ainftycat^{\str}.
	\eqnnd
Theorem~\ref{theorem. main theorem} states that the natural inclusion $\Ainftystr \to \Ainfty$ induces an equivalence of $\infty$-categories
	\eqnn
	\Ainftycat^{\str} \to \Ainftycat.
	\eqnnd
We conclude:

\begin{theorem}
The inclusion $\dgcatt \to \Ainfty$ (into the category of not-necessarily-strictly unital $A_\infty$-categories and unital $A_\infty$-functors) induces an equivalence of $\infty$-categories
	\eqnn
	i: \dgcat \to \Ainftycat.
	\eqnnd
\end{theorem}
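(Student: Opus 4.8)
The statement is obtained by composing the three equivalences of $\infty$-categories assembled above, so the plan is simply to record that composite and to check that it is the functor induced by the inclusion $\dgcatt \to \Ainfty$. First I would observe that, at the level of $1$-categories, the inclusion in question factors as $\dgcatt \to \Ainftystr \xto{j} \Ainfty$, where the first functor regards a $\kk$-linear dg-category as a strictly unital $A_\infty$-category with vanishing higher operations. Every functor here carries quasi-equivalences to quasi-equivalences, so after localizing along $\eqs$ we obtain a composite $\dgcatt[\eqs^{-1}] \to \Ainftystr[\eqs^{-1}] \xto{j} \Ainfty[\eqs^{-1}]$, whose middle and final terms are $\Ainftycat^{\str}$ and $\Ainftycat$ by definition.

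Next I would invoke the three inputs. Theorem~\ref{theorem. haugseng rectification} (via the chain of equivalences in~\eqref{eqn. big diagram of oo cats of dg cats}) identifies the source $\dgcatt[\eqs^{-1}]$ with $\dgcat$; the first map of the composite, $\dgcatt[\eqs^{-1}] \to \Ainftystr[\eqs^{-1}]$, is an equivalence by the results of Pascaleff and of Canonaco--Ornaghi--Stellari recorded in the middle column of~\eqref{eqn. five models}; and the second map, $\Ainftycat^{\str} \to \Ainftycat$, is an equivalence by Theorem~\ref{theorem. main theorem}. Precomposing the composite of these last two equivalences with a homotopy inverse of the comparison equivalence $\dgcatt[\eqs^{-1}] \xto{\sim} \dgcat$ yields the desired functor $i \colon \dgcat \to \Ainftycat$, which is an equivalence, being a composite of equivalences.

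I do not anticipate a genuine obstacle here: all of the mathematical content already lives in the three cited results, and the only point needing attention is the compatibility of the $1$-categorical factorization $\dgcatt \to \Ainftystr \to \Ainfty$ with the three localizations. This compatibility is automatic because each of the three functors preserves $\eqs$, so the induced diagram of $\infty$-categorical localizations commutes up to canonical equivalence; consequently the functor $i$ produced above really is (up to the comparison equivalence $\dgcatt[\eqs^{-1}] \simeq \dgcat$) the one induced by the inclusion $\dgcatt \to \Ainfty$.
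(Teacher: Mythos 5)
Your proposal is correct and follows essentially the same route as the paper: the paper likewise factors the inclusion as $\dgcatt \to \Ainftystr \xrightarrow{j} \Ainfty$, identifies $\dgcatt[\eqs^{-1}]$ with $\dgcat$ via Theorem~\ref{theorem. haugseng rectification} and the diagram~\eqref{eqn. big diagram of oo cats of dg cats}, and then composes the Pascaleff/Canonaco--Ornaghi--Stellari equivalence $\dgcatt[\eqs^{-1}] \to \Ainftystr[\eqs^{-1}]$ with the equivalence $\Ainftycat^{\str} \to \Ainftycat$ of Theorem~\ref{theorem. main theorem}. Your added remark about the compatibility of the $1$-categorical factorization with the three localizations is a point the paper leaves implicit, but it is handled exactly as you say.
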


\begin{remark}
\label{remark. Ainftycat is presentably symmetric monoidal}
We do not endeavor here to create point-set models of homotopy (co)limits and homotopy-coherent tensor products of (not necessarily strictly) unital $A_\infty$-categories. But via the equivalence $i: \dgcat \to \Ainftycat$, we may conclude that $\Ainftycat$ enjoys all the formal properties of $\dgcat$ (such as admitting all limits and colimits and being presentable). Further, we use $i$ to transfer the symmetric monoidal structure of $\dgcat$ (which we will review shortly) to $\Ainftycat$. 

We conclude that this endows $\Ainftycat$ with the structure of a presentably symmetric monoidal $\infty$-category.
\end{remark}

\subsection{Monoidal structure on enrichments and on $\dgcat$}
\label{section. monoidal structures on enriched categories}
Let $\cV$ be a symmetric monoidal $\infty$-category. 
Gepner-Haugseng define a symmetric monoidal tensor product on $\inftyCat^{\cV}$. In fact, $\inftyCat^{\cV}$ is obtained as a localization of $\algcat(\cV)$ (and the monoidal structure is induced by this localization), so for most arguments (as ours here) it will suffice to understand the monoidal structure on $\algcat(\cV)$ as laid out in ibid.

Parsing the discussions there,\footnote{
For details the reader may consult Proposition~3.6.14,
(the discussion before) Lemma~3.6.15,
Proposition 4.3.11, 
(the proof of) Lemma~5.7.10,
and
Proposition~5.7.14 of~\cite{gepner-haugseng}.} we arrive at the following informal description of the symmetric monoidal structure $\tensor$ on $\algcat(\cV)$. We have that  
\enum[(i)]
	\item $\ob(A \tensor B) \simeq \ob A \times \ob B$,  and
	\item $\hom_{A \tensor B}( (a,b),(a',b')) \simeq \hom_A(a,a') \tensor^{\cV} \hom_B(b,b')$. That is, morphism spaces are determined using the composition 
		\eqnn
		\xymatrix{
		(\ob A \times \ob A) \times (\ob B \times \ob B) \ar[rr]^-{\hom_A \times \hom_B}
		&& \ob \cV \times \ob \cV \ar[r]^-{\tensor^{\cV}}
		& \ob \cV
		}
		\eqnnd
	where the last arrow is the monoidal product of $\cV$. 
\enumd
From the above description (because direct products of discrete spaces are discrete) it is clear that $\algcat(\cV)_{\Set}$ is a symmetric monoidal subcategory of $\algcat(\cV)$.

\begin{example}
\label{example. usual tensor of enriched categories}
When $\cV$ is an ordinary category, and when restricted to the subcategory $\algcat(\cV)_{\Set}$, the above endows $\algcat(\cV)_{\Set}$ with the usual tensor product of $\cV$-enriched categories. (To verify this fact is the only reason we explicated the monoidal structure on $\algcat(\cV)$.)

In particular, the symmetric monoidal structure on 
	\eqnn
	\algcat(\chain)_{\Set}, \qquad \algcat(\chain^{\circ})_{\Set},
	\qquad\text{and}\qquad
	\algcat(\chain^{\homproj})_{\Set}
	\eqnnd
is the usual tensor product of dg-categories: $\ob(A \tensor B) = \ob A \times \ob B$, and $\hom_{A \tensor B}( (a,b),(a',b')) = \hom_A(a,a') \tensor_{\kk} \hom_B(b,b')$.
\end{example}

\begin{remark}
The assigments $\cV \mapsto \algcat({\cV})_{\Set}$ and $\cV \mapsto \algcat({\cV})$ are functorial for symmetric monoidal $\cV$ and symmetric monoidal functors between them (Proposition 4.3.11 of~\cite{gepner-haugseng}).
\end{remark}

There are now two potential symmetric monoidal structures on $\dgcat$:
\enum[(i)]
	\item A symmetric monoidal structure induced by the localization
		\eqnn
		\algcat(\chain^\circ[\quis^{-1}])_{\Set} \to
		\algcat(\chain^\circ[\quis^{-1}])_{\Set}[\FFES^{-1}].
		\eqnnd
	(This is the symmetric monoidal structure guaranteed by the work of Gepner-Haugseng. Note we could use $\chain^{\homproj}$ in place of $\chain^{\circ}$ and obtain an equivalent symmetric monoidal structure.)
	\item A symmetric monoidal structure induced by the localization
		\eqnn
		\dgcatt_{\homcof} \to
		\dgcatt_{\homcof}[\eqs^{-1}]
		\eqnnd
	and pulling back to $\dgcat$ using the equivalence of Remark~\ref{remark. homcof equivalence} and composing with~\eqref{eqn. comparison map}. (We could use $\dgcatt_{\homproj}$ instead of $\dgcatt_{\homcof}$ and obtain an equivalent symmetric monoidal structure.)
\enumd

\begin{prop}
The above two symmetric monoidal structures are equivalent.
\end{prop}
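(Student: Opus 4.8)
The plan is to present both symmetric monoidal structures on $\dgcat$ as transports, along a zig-zag of \emph{symmetric monoidal} equivalences, of one and the same symmetric monoidal localization: that of the symmetric monoidal $1$-category $\dgcatt_{\homcof}$ (with the usual tensor product of dg-categories) along quasi-equivalences. The only non-formal input is the bookkeeping that the comparison functors appearing in~\eqref{eqn. big diagram of oo cats of dg cats} are symmetric monoidal, not merely functors of $\infty$-categories; everything else follows from the standard fact that a symmetric monoidal functor whose underlying functor of $\infty$-categories is an equivalence is automatically an equivalence of symmetric monoidal $\infty$-categories (a map of commutative monoid objects of $\Cat_\infty$ with invertible underlying map is invertible).

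First I would check that Proposition~\ref{prop. localization is symmetric monoidal} applies everywhere it is needed. By Example~\ref{example. usual tensor of enriched categories}, $\dgcatt_{\homcof}$, $\algcat(\chain^\circ)_{\Set}$, and $\algcat(\chain^\circ[\quis^{-1}])_{\Set}$ are symmetric monoidal, the product on the first two being the usual tensor of dg-categories; by Remark~\ref{remark. tensor of homcof are homcof} the class of quasi-equivalences in $\dgcatt_{\homcof}$ is closed under $\tensor$, and correspondingly the classes of $\FFES$ maps used in~\eqref{eqn. big diagram of oo cats of dg cats} are closed under $\tensor$. Hence each localization appearing there carries a localization-induced symmetric monoidal structure, and structure (ii) is by definition the one on $\dgcatt_{\homcof}[\eqs^{-1}]$ transported to $\dgcat$ via Remark~\ref{remark. homcof equivalence} and the comparison map~\eqref{eqn. comparison map}. (Note that this is exactly the point where one must work with $\dgcatt_{\homcof}$ rather than $\dgcatt$ or $\dgcatt^\circ$, whose tensor products do not preserve quasi-equivalences, so that Proposition~\ref{prop. localization is symmetric monoidal} does not apply directly to them.)

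Next I would upgrade the comparison functors to symmetric monoidal ones. The equivalence $\dgcatt_{\homcof} \simeq \algcat(\chain^\circ)_{\Set}$ is symmetric monoidal by Examples~\ref{example. usual enrichments of 1-categories} and~\ref{example. usual tensor of enriched categories}; the symmetric monoidal functor $\chain^\circ \to \chain^\circ[\quis^{-1}]$ of Example~\ref{example. symm mon maps of chain} induces, via the symmetric monoidal functoriality of $\cV \mapsto \algcat(\cV)_{\Set}$ (Proposition~4.3.11 of~\cite{gepner-haugseng}), a symmetric monoidal functor $\algcat(\chain^\circ)_{\Set} \to \algcat(\chain^\circ[\quis^{-1}])_{\Set}$. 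Both carry quasi-equivalences / $\FFES$ maps to $\FFES$ maps, so by the functoriality clause of Proposition~\ref{prop. localization is symmetric monoidal} (the assignment $(C,W)\mapsto C[W^{-1}]$ is symmetric monoidal and functorial for monoidal $C\to C'$ taking $W$ into $W'$) each descends to a symmetric monoidal functor on localizations. Thus
\[
\dgcatt_{\homcof}[\eqs^{-1}] \xrightarrow{\ \simeq\ } \algcat(\chain^\circ)_{\Set}[\FFES^{-1}] \longrightarrow \algcat(\chain^\circ[\quis^{-1}])_{\Set}[\FFES^{-1}] = \dgcat
\]
is a zig-zag of symmetric monoidal functors, and, up to the identifications already present in~\eqref{eqn. big diagram of oo cats of dg cats}, it is exactly the comparison equivalence used to define structure (ii).

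Finally I would conclude: the first functor in the zig-zag is an equivalence of underlying $\infty$-categories (being induced by an equivalence of symmetric monoidal $1$-categories), hence a symmetric monoidal equivalence; the second is an equivalence of underlying $\infty$-categories by Haugseng's Theorem~\ref{theorem. haugseng rectification}, which forces every arrow of~\eqref{eqn. big diagram of oo cats of dg cats} to be an equivalence, hence again a symmetric monoidal equivalence. Transporting the localization-induced monoidal structure on $\dgcatt_{\homcof}[\eqs^{-1}]$ along this zig-zag therefore produces precisely the Gepner--Haugseng monoidal structure on $\dgcat = \algcat(\chain^\circ[\quis^{-1}])_{\Set}[\FFES^{-1}]$, i.e.\ structure (i); and by construction that transport is structure (ii). The main obstacle, as flagged above, is purely organizational: tracking which categories are legitimately symmetric monoidal and which comparison functors are symmetric monoidal, and quoting the precise monoidal-functoriality statements of~\cite{gepner-haugseng} and~\cite{higher-algebra}; the rest is formal.
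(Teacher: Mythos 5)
Your proposal is correct and follows essentially the same route as the paper: both arguments identify the two $\tensor$-labeled arrows of~\eqref{eqn. big diagram of oo cats of dg cats} as symmetric monoidal functors induced by localizing symmetric monoidal functors of $1$-categories, observe that their composite is an underlying equivalence by Theorem~\ref{theorem. haugseng rectification}, and conclude that the two structures on $\dgcat$ agree. The only (immaterial) difference is in the final step, where the paper appeals to the universal property of symmetric monoidal localizations while you appeal to the fact that a symmetric monoidal functor with invertible underlying functor is a symmetric monoidal equivalence.
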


\begin{proof}
In~\eqref{eqn. big diagram of oo cats of dg cats}, the two arrows labeled by $\tensor$ have natural symmetric monoidal structures induced by localization. 

(This is because the corresponding arrows in~\eqref{eqn. big diagram of dg cats} are symmetric monoidal functors. We note that the other arrows in~\eqref{eqn. big diagram of dg cats} do not induce symmetric monoidal structures on the maps between localizations -- $\dgcatt^\circ$ is not a monoidal subcategory of $\dgcatt$, and the tensor product of $\dgcatt$ does not preserve quasi-equivalences.) 

Aside from the domains and codomains of those two arrows, all $\infty$-categories in~\eqref{eqn. big diagram of oo cats of dg cats} have symmetric monoidal structures defined by pulling back symmetric monoidal structures through the equivalences in~\eqref{eqn. big diagram of oo cats of dg cats} that are not labeled by $\tensor$ symbols. (These induced structures are well-defined up to symmetric monoidal equivalence, because~\eqref{eqn. big diagram of oo cats of dg cats} is a coherent diagram of $\infty$-categories).

Because the composition of the two $\tensor$-labeled symmetric monoidal functors in~\eqref{eqn. big diagram of oo cats of dg cats} is underlied by an equivalence of $\infty$-categories by Theorem~\ref{theorem. haugseng rectification}, the result follows by the universal property of symmetric monoidal structures induced by localizations (Proposition~\ref{prop. localization is symmetric monoidal}~\eqref{item. universal property of symm monoidal localization}).
\end{proof}

\subsection{Identification with the derived tensor product}
By Section~\ref{section. monoidal structures on enriched categories}, we have a canonical symmetric monoidal structure on $\dgcat$ induced by the symmetric monoidal structure of (cofibrant or homotopically projective) chain complexes. This induces a symmetric monoidal structure on $\dgcatt[\eqs^{-1}]$ via the equivalence~\eqref{eqn. comparison map}.  On the other hand, we have a classical derived tensor product on the homotopy category $\ho \dgcatt[\eqs^{-1}]$. We now equate these monoidal structures.

\begin{prop}\label{prop. monoidal structure is derived dg tensor}
The monoidal structure on $\dgcatt[\eqs^{-1}]$ from Section~\ref{section. monoidal structures on enriched categories} is, at the level of the homotopy category, naturally equivalent to the derived tensor product of dg-categories (in the sense of To\"en).
\end{prop}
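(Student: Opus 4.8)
The plan is to show that both symmetric monoidal structures on $\ho\,\dgcatt[\eqs^{-1}]$ are, after transporting along the equivalence $\dgcatt[\eqs^{-1}]\simeq\dgcatt_{\homcof}[\eqs^{-1}]$ of Remark~\ref{remark. homcof equivalence}, equal to the symmetric monoidal structure obtained by localizing the point-set symmetric monoidal $1$-category $(\dgcatt_{\homcof},\tensor_\kk)$ of Example~\ref{example. usual tensor of enriched categories}. Once both are identified with this common structure, the assertion -- indeed the strengthening that the two are \emph{equal} as symmetric monoidal structures, not merely equivalent -- follows by applying $\ho(-)$.

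First I would unwind the Section~\ref{section. monoidal structures on enriched categories} structure. By the Proposition immediately preceding this one, it is the structure on $\dgcat$ corresponding, through the coherent diagram~\eqref{eqn. big diagram of oo cats of dg cats}, to the one on $\dgcatt_{\homcof}[\eqs^{-1}]$ induced by localizing $(\dgcatt_{\homcof},\tensor_\kk)$. Since $\tensor_\kk$ already preserves quasi-equivalences between objects of $\dgcatt_{\homcof}$ -- because homotopically projective complexes are homotopically flat (Remarks~\ref{remark. good properties of h projectives} and~\ref{remark. tensor of homcof are homcof}) -- no derivation is needed: the induced bifunctor sends $([A],[B])$ to $[A\tensor_\kk B]$, and its associativity, unit and symmetry constraints are the images of the corresponding strict constraints of $\tensor_\kk$, which are isomorphisms of dg-categories. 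All of this takes place inside $\dgcatt_{\homcof}$ because that full subcategory is closed under $\tensor_\kk$ and contains the monoidal unit $\underline{\kk}$, whose morphism complex $\kk$ (concentrated in degree $0$) is cofibrant.

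Next I would recall To\"en's derived tensor product. Writing $Q$ for Tabuada cofibrant replacement, one has $A\tensor^{\mathbb L}B := QA\tensor_\kk QB$, with coherence constraints the images of the point-set constraints of $\tensor_\kk$. The compatibility that makes everything match is that a cofibrant dg-category has cofibrant, hence homotopically projective, morphism complexes (Notation~\ref{notation. dgcat versions}, via To\"en's Proposition~2.3), so $QA\tensor_\kk QB$ lies in $\dgcatt_{\homcof}$. Under the equivalence $\dgcatt[\eqs^{-1}]\xrightarrow{\,Q\,}\dgcatt^\circ[\eqs^{-1}]\into\dgcatt_{\homcof}[\eqs^{-1}]$ of Remark~\ref{remark. homcof equivalence}, the object $A$ is carried to $QA\in\dgcatt_{\homcof}$, and To\"en's bifunctor $\tensor^{\mathbb L}$ is precisely the pullback along this equivalence of the bifunctor $([A],[B])\mapsto[A\tensor_\kk B]$ identified in the previous step; likewise for all the coherence data, since in both cases it is the image of the single point-set symmetric monoidal structure of $(\dgcatt_{\homcof},\tensor_\kk)$.

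The hard part will be the bookkeeping around that last point. To\"en's $\tensor^{\mathbb L}$ is normally presented as a derived bifunctor on the homotopy category of \emph{all} dg-categories, so one must check that iterating it -- which a priori calls for re-cofibrantly-replacing $QA\tensor_\kk QB$ -- and forming the associativity and symmetry isomorphisms does not change homotopy classes once one works inside $\dgcatt_{\homcof}$. This is exactly where Remark~\ref{remark. homcof equivalence} (that $\dgcatt^\circ[\eqs^{-1}]\to\dgcatt_{\homcof}[\eqs^{-1}]$ is an equivalence) together with the closure of $\dgcatt_{\homcof}$ under $\tensor_\kk$ is decisive: it lets one carry out both constructions inside the single relative category $(\dgcatt_{\homcof},\eqs)$ equipped with the honest, homotopical, strictly symmetric monoidal functor $\tensor_\kk$, so that the two structures agree there on the nose. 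Passing to homotopy categories then yields the claimed natural equivalence.
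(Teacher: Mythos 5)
Your argument is correct and follows essentially the same route as the paper: reduce, via the preceding proposition and Remark~\ref{remark. homcof equivalence}, to the symmetric monoidal structure obtained by localizing $(\dgcatt_{\homcof},\tensor)$ and pulling back along the cofibrant-replacement equivalence $\unQ$, then recognize the resulting binary operation as a formula for To\"en's derived tensor product. The only substantive difference is that you use the two-sided formula $QA\tensor QB$, which lands in $\dgcatt_{\homcof}$ outright, whereas the paper uses the one-sided formula $Q(A)\tensor B$ and therefore invokes the naturality zigzag $Q(A\tensor B)\to A\tensor B\leftarrow A\tensor Q(B)$; note also that your claim of on-the-nose equality of the two structures is stronger than what is actually verified (identifying $\unQ^{-1}$ with the inclusion already costs a natural equivalence), but the proposition only asserts a natural equivalence at the level of the homotopy category, which your argument delivers.
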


\begin{proof}
For this proof only, we abuse notation and let $Q: \dgcatt \to \dgcatt_{\homcof}$ denote the cofibrant replacement functor with codomain $\dgcatt_{\homcof}$. $Q$ induces an equivalence $\unQ$ of localized $\infty$-categories (Remark~\ref{remark. homcof equivalence}).

Consider the following diagram of $\infty$-categories:
	\eqnn
	\xymatrix{
		& \dgcatt[\eqs^{-1}] \times \dgcatt[\eqs^{-1}] \ar@{-->}[dd]
	\\
	\dgcatt \times \dgcatt  \ar@{.>}[ur] \ar[dd]^{Q \times \id}
	\\
		& \dgcatt_{\homcof}[\eqs^{-1}] \times \dgcatt[\eqs^{-1}] \ar@{-->}[rr] \ar@{-->}[dd]
		&& \dgcatt[\eqs^{-1}] \ar@{-->}[dd]^{\unQ}
	\\
	\dgcatt_{\homcof} \times \dgcatt \ar[rr]^{\tensor} \ar@{.>}[ur] \ar[dd]^{\id \times Q}
		&& \dgcatt \ar[dd] \ar@{.>}[ur]^-{L}
	\\
		& \dgcatt_{\homcof}[\eqs^{-1}] \times \dgcatt_{\homcof}[\eqs^{-1}] \ar@{-->}[rr]^{\underline{\tensor}}
		&& \dgcatt_{\homcof}[\eqs^{-1}]
	\\
	\dgcatt_{\homcof} \times \dgcatt_{\homcof} \ar[rr]^-{\tensor} \ar@{.>}[ur] 
		&& \dgcatt_{\homcof} \ar@{.>}[ur] 
		}
	\eqnnd
In the foreground of the above diagram are solid arrows. These solid arrows are functors of ordinary categories. We warn that the solid rectangle in the foreground does {\em not} commute (and in particular, the solid three-dimensional rectangular diagram has no interior coherence that we will utilize), but it commutes up to two natural transformations. Namely, for every pair $(A,B)$ of dg-categories we have natural maps
	\eqn\label{eqn. QAB naturality}
	Q(A \tensor B) \to A \tensor B \leftarrow A \tensor Q(B). 
	\eqnd
The lefthand map is always a quasi-equivalence, and when $A \in \ob \dgcatt_{\homcof}$, so is the righthand map. 

The dashed arrows in the diagram are all maps induced from the solid arrows by the formal process of $\infty$-categorical localization, with the diagonally oriented dotted arrows being the natural map from a category to its ($\infty$-categorical) localization. (Note that we have used that localization commutes with products.) Importantly, because the natural maps in~\eqref{eqn. QAB naturality} are quasi-equivalences, the dashed rectangle in the background {\em does} commute (up to homotopy). 

In what follows, we let $L: \dgcatt \to \dgcatt[\eqs^{-1}]$ denote the localization map (this is also labeled in the diagram). Because $L$ can be modeled to be a bijection at the level of objects, we let $LA$ denote the dg-category $A$ considered as an object in the localization.

The symmetric monoidal structure on $\dgcatt[\eqs^{-1}]$ induced by the equivalence $\unQ: \dgcatt[\eqs^{-1}]  \to \dgcatt_{\homcof}[\eqs^{-1}]$ has binary part, up to natural equivalence, given by
	\eqnn
	\unQ^{-1} \circ \underline{\tensor} \circ (\unQ \times \unQ)
	\eqnnd
where $\underline{\tensor}$ is the symmetric monoidal structure on $\dgcatt_{\homcof}[\eqs^{-1}]$ induced by the usual tensor product of dg-categories. The dashed diagram in the background, along with the homotopy-commutativity of the topmost rectangles containing the dotted diagonal arrows, tells us that the above composition is homotopic to a functor which, at the level of objects, is given by
	\eqnn
	(LA,LB) \mapsto L(Q(A) \tensor B).
	\eqnnd
This is precisely one formula for the derived tensor product of dg-categories. 

The above discussion did not take care to verify all higher coherences (we only considered the binary term, and only up to unspecified natural equivalences). Thankfully, the discussion is sufficient to conclude that (at the level of homotopy categories) the monoidal functor on $\ho \dgcatt[\eqs^{-1}]$ is naturally isomorphic to the assignment $(LA,LB) \mapsto L((Q(A)) \tensor B)$.
\end{proof}

\subsection{$\Ainftycat$ has internal homs}
\label{section. internal homs exist}
Let $\cC$ be a presentably monoidal $\infty$-category, so that for every object $C$, the functor $C \tensor - : \cC \to \cC$ preserves all colimits. Because $\cC$ is presentable, the adjoint functor theorem (Corollary 5.5.2.9 of~\cite{htt}) guarantees a right adjoint. Calling this right adjoint $\underline{\hom}(C,-)$, we witness a natural equivalence of Kan complexes
	\eqn\label{eqn. internal hom}
	\hom_{\cC}(C \tensor D, E)
	\simeq
	\hom_{\cC}(D, \underline{\hom}(C,E)).
	\eqnd
As usual, we will refer to $\underline{\hom}(C,E)$ as an internal hom object of $\cC$.
Note also that the unit of the monoidal structure on $\cC$ determines a monoidal and colimit-preserving functor
	\eqn\label{eqn. presentable unit}
	\spaces \to \cC
	\eqnd
characterized by sending the Kan complex $\Delta^0$ to the unit $1_{\cC}$.

\begin{prop}
Let $\cV$ be the $\infty$-category of chain complexes over a commutative ring $\kk$ (Definition~\ref{defn. oo cat of chain}).
The functor~\eqref{eqn. presentable unit} is induced by the functor of 1-categories sending a simplicial set to its normalized complex of singular chains over $\kk$.
\end{prop}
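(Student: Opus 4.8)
The plan is to identify both the functor~\eqref{eqn. presentable unit} and the functor induced by normalized chains as colimit-preserving functors out of $\spaces$, and then invoke the universal property of $\spaces$. First I would model $\spaces$ as the $\infty$-categorical localization of $\sset$ along weak homotopy equivalences, so that the structure map $\sset \to \spaces$ carries $\Delta^0$ to the terminal (contractible) space. Since $\spaces \simeq \cP(\Delta^0)$ is the free cocompletion of a point, restriction along $\Delta^0 \to \spaces$ induces an equivalence from the $\infty$-category of colimit-preserving functors $\spaces \to \cV$ to $\cV$ itself (Theorem~5.1.5.6 of~\cite{htt}). In particular there is, up to contractible ambiguity, a \emph{unique} colimit-preserving functor $\spaces \to \cV$ sending $\Delta^0$ to the monoidal unit $1_\cV$, namely the chain complex $\kk$ placed in degree $0$; and~\eqref{eqn. presentable unit} is this functor. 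It therefore suffices to prove that the $1$-categorical normalized-chains functor induces a colimit-preserving functor $\spaces \to \cV$ carrying $\Delta^0$ to $\kk$.

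The second step is to show that $C_* \colon \sset \to \chain$, sending $K$ to the normalized complex $N_*\kk[K]$ of the free simplicial $\kk$-module on $K$, is a left Quillen functor for the Kan--Quillen model structure on $\sset$ and the projective model structure on $\chain$. It is a left adjoint, being the composite of the free-simplicial-module functor, the normalization equivalence $s\Mod_\kk \xrightarrow{\sim} \chain_{\geq 0}$ of the Dold--Kan correspondence, and the inclusion $\chain_{\geq 0} \into \chain$ (left adjoint to the connective-truncation functor). It carries monomorphisms of simplicial sets to degreewise-split monomorphisms of chain complexes whose cokernels are bounded-below complexes of free $\kk$-modules, hence to cofibrations; and it sends weak homotopy equivalences to quasi-isomorphisms by the homotopy invariance of simplicial chains. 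Thus $C_*$ preserves cofibrations and all weak equivalences (so, in particular, trivial cofibrations), hence is left Quillen. Since moreover every object of $\sset$ is cofibrant, $C_*$ descends to a functor $\spaces \to \chain[\quis^{-1}] \simeq \cV$ which, as the left-adjoint part of a Quillen adjunction, preserves all colimits. Finally $C_*(\Delta^0) = N_*\kk[\Delta^0]$ is $\kk$ concentrated in degree $0$, i.e. $1_\cV$, so by the uniqueness from the first step this functor coincides with~\eqref{eqn. presentable unit}.

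If one additionally wishes to identify the two as \emph{symmetric monoidal} functors (which the stated formulation does not strictly require, but is worth recording), I would observe that the Eilenberg--Zilber shuffle maps $C_*(K) \tensor C_*(L) \to C_*(K \times L)$ equip $C_*$ with a lax symmetric monoidal structure whose structure maps are quasi-isomorphisms for all $K,L$, and whose unit map $\kk \to C_*(\Delta^0)$ is the identity. This forces the induced functor of $\infty$-categories to be symmetric monoidal, and since $\spaces$ with its Cartesian structure is the initial presentably symmetric monoidal $\infty$-category (the unit of $\mathrm{Pr}^{\mathrm L}$), the symmetric monoidal structure is then pinned down uniquely and matches that of~\eqref{eqn. presentable unit}.

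I expect the main obstacle to lie in the ``left Quillen'' verification --- precisely, confirming that $C_*$ sends cofibrations of simplicial sets to cofibrations in the projective model structure on \emph{unbounded} chain complexes, together with the (standard but slightly technical) passage from a Quillen adjunction to a colimit-preserving functor between the localized $\infty$-categories. For the optional monoidal refinement the corresponding technical input is the fact that a lax symmetric monoidal functor with quasi-isomorphism structure maps induces a strong symmetric monoidal functor after $\infty$-categorical localization. Everything else is formal; and if one forgoes the monoidal refinement, the shuffle maps are not needed at all, since the identification of underlying functors in the first two steps uses only colimit-preservation and the value on $\Delta^0$.
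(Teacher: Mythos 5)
Your proof is correct and follows essentially the same route as the paper: both arguments identify \eqref{eqn. presentable unit} via the universal property of $\spaces$ as the free cocompletion of a point (so one need only check colimit-preservation and the value $C_*(\Delta^0)\simeq\kk$), and both invoke the Eilenberg--Zilber quasi-isomorphisms to upgrade the lax symmetric monoidal structure to a strong one after localization. The only difference is one of emphasis -- you justify colimit-preservation carefully via the left Quillen property of $C_*$, a point the paper's proof passes over quickly -- which is a welcome amount of extra detail rather than a change of strategy.
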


\begin{proof}
It is well-known that the functor of 1-categories $\sset \to \chain^\circ$ sending a simplicial set to its normalized $\kk$-linear chain complex is lax symmetric monoidal.\footnote{See Section 1 of~\cite{may-operads-and-sheaf-cohomology}, or VIII.8 of~\cite{maclane-homology}, or  2.5.7.12 of~\cite{kerodon}, for example.} Moreover, this functor sends weak homotopy equivalences to quasi-isomorphisms -- so by localizing both 1-categories along these classes of weak equivalences, one obtains\footnote{This follows from the paragraph preceding 4.1.3.4 in~\cite{higher-algebra}.} a lax symmetric monoidal functor $\spaces\to \chain^\circ[\quis^{-1}]$.  It is the content of the Eilenberg-Zilber Theorem that all the lax morphisms $C_*(X) \tensor C_*(Y) \to C_*(X \times Y)$ are quasi-isomorphisms, so in fact this lax symmetric monoidal functor is a symmetric monoidal functor upon passage to the localizations. Noting that $X \mapsto C_*(X)$ preserves homotopy colimits and sends $\Delta^0$ to the monoidal unit of $\cV$, we have verified the characterizing property of~\eqref{eqn. presentable unit}.
\end{proof}

\begin{remark}
\label{remark. presentably monoidal structure on V-enriched cats}
It is a result of Gepner-Haugseng (Proposition~5.7.16 of~\cite{gepner-haugseng}) that if $\cV$ is presentably monoidal, then so is the $\infty$-category $\inftyCat^{\cV}$ of $\cV$-enriched $\infty$-categories. In particular, as in~\eqref{eqn. internal hom}, one may contemplate the internal hom objects in $\cC = \inftyCat^{\cV}$. 
\end{remark}

Let $\cV$ be the $\infty$-category of chain complexes over $\kk$ (Definition~\ref{defn. oo cat of chain}). Then we have already seen in Section~\ref{section. equivalent dg cat models} the equivalences
	\eqnn
	\Ainftycat
	\simeq
	\dgcatt_{\homcof}[\eqs^{-1}] \simeq \inftyCat^{\cV} =\dgcat 
	\eqnnd
We have also witnessed  in Section~\ref{section. monoidal structures on enriched categories} that the middle equivalence can be made symmetric monoidal for the natural tensor products on $\inftyCat^{\cV}$ and $\dgcatt_{\homcof}[\eqs^{-1}]$. We endow $\Ainftycat$ with the tensor product induced by the above equivalence. By Remark~\ref{remark. presentably monoidal structure on V-enriched cats}, $\Ainftycat$ is presentably symmetric monoidal and hence has internal homs.

\begin{remark}[The underlying $\infty$-category]
Because~\eqref{eqn. presentable unit} is monoidal, it induces a functor from $\spaces$-enriched $\infty$-categories (which is to say, $\infty$-categories, by Example~\ref{example. infinity categories as enriched}) to $\cV$-enriched $\infty$-categories:
	\eqn\label{eqn. free dg category}
	\inftyCat \to \inftyCat^{\cV}.
	\eqnd
Moreover, because~\eqref{eqn. presentable unit} is a functor of presentably monoidal categories, so is~\eqref{eqn. free dg category} (Proposition~5.7.8 of~\cite{gepner-haugseng}). In particular,~\eqref{eqn. free dg category} has a right adjoint. We call this right adjoint the {\em underlying $\infty$-category} functor.

We caution that Gepner-Haugseng uses a slightly different definition for the underlying $\infty$-category functor. 
\eqref{eqn. free dg category} arises from a functor
	\eqn\label{eqn. free algcat}
	\algcat(\spaces) \to \algcat(\cV)
	\eqnd
obtained by applying~\eqref{eqn. presentable unit} on morphism spaces (Example~4.3.20 of~\cite{gepner-haugseng}).  When $\cV$ is presentably symmetric monoidal, \eqref{eqn. free algcat} is also. And the right adjoint to \eqref{eqn. free algcat} is what is called the underlying $\infty$-category functor in  Definition~5.1.10 of~\cite{gepner-haugseng}.
But we may localize along the fully faithful and essentially surjective morphisms to arrive at \eqref{eqn. free dg category}, so the effects of \eqref{eqn. free dg category} and \eqref{eqn. free algcat} are identical on objects.

Moreover, because $\inftyCat^{\cV}$ may be obtained by localizing $\algcat(\cV)$ or by localizing the full subcategory $\algcat(\cV)_{\Set}$ -- see~\eqref{eqn. two presentations for enriched cats} -- the underlying $\infty$-category functor may also be computed by taking the right adjoint to
	\eqn\label{eqn. free algcat set}
	\algcat(\spaces)_{\Set} \to \algcat(\cV)_{\Set}.
	\eqnd
(Note that \eqref{eqn. free algcat set} arises as the restriction of \eqref{eqn. free algcat} to $\algcat(\spaces)_{\Set}$.)
One may then localize~\eqref{eqn. free algcat set} along the fully faithful and essentially surjective morphisms. This is how we will compute (the right adjoint to) \eqref{eqn. free dg category}.

(Note also that the localizations of \eqref{eqn. free algcat} and of \eqref{eqn. free algcat set} are naturally identified thanks to the equivalence~\eqref{eqn. two presentations for enriched cats}.)

When $\cV$ is the $\infty$-category of chain complexes, we identify the right adjoint to  \eqref{eqn. free dg category} in Proposition~\ref{prop. dg nerve is right adjoint} as induced by a well-known point-set construction called the dg nerve. (The dg nerve is due to Lurie -- see Construction 1.3.1.6 of~\cite{higher-algebra}.)

\end{remark}

\subsection{Statement of theorems}

\begin{defn}[Split units]
Let $A$ be a strictly unital $A_\infty$-category. We say that the units of $A$ {\em split} or that {\em $A$ has split units}, if for every object $X$ the map $\kk \to \hom_A(X,X)$ sending $1_\kk$ to the strict unit $u_X$  splits as a map of graded $\kk$-modules (not necessarily as a map of complexes).
\end{defn}

\begin{theorem}
\label{theorem. internal homs in dgcat}
Let $A$ and $B$ be dg-categories. Then the following dg-categories are  equivalent in $\dgcat$:
	\begin{enumerate}[(a)]
	\item\label{item. internal hom in dgcat} 
		The internal hom object $\underline{\hom}_{\dgcat}(A,B)$ in the $\infty$-category of dg-categories.
	\item\label{item. internal hom in dgcat rqr cofib modules} 
		For any quasi-equivalence of dg-categories $A' \to A$ with $A'$ a dg-category with homotopically projective morphism complexes, the dg-category $h-proj((A')^{\op} \tensor B)^{rqr})$ of right quasirepresentable homotopically projective bimodules. 
	\item\label{item. internal hom in dgcat unital cofibrant Aoo functors} For any  $A' \to A$ as above, the dg-category $\fun_{A_\infty}(A',B)$ of unital $A_\infty$-functors from $A'$ to $B$.
	\item\label{item. internal hom unital projective A'} For any quasi-equivalence of $A_\infty$-categories $A'' \to A$ with $A''$ a unital $A_\infty$-category with homotopically projective morphism complexes, the dg-category $\fun_{A_\infty}(A'',B)$ of unital $A_\infty$-functors from $A''$ to $B$.
	\item\label{item. internal hom in dgcat strictly unital split unit Aoo functors} For any quasi-equivalence $A' \to A$ for which $A'$ is a dg-category with split units and  homotopically projective morphism complexes, the dg-category $\fun_{A_\infty}^{\str}(A',B)$ of strictly unital $A_\infty$-functors from $A'$ to $B$.
	\item\label{item. internal hom unital projective split units A'}  For any  strictly unital quasi-equivalence of $A_\infty$-categories $A'' \to A$ with $A''$ a strictly unital $A_\infty$-category with homotopically projective morphism complexes and split units, the dg-category $\fun_{A_\infty}^{\str}(A'',B)$ of strictly unital $A_\infty$-functors from $A''$ to $B$. 
	\enumd
The techniques of the proof will show that all equivalences except those involving \eqref{item. internal hom unital projective A'} and \eqref{item. internal hom unital projective split units A'} can be chosen to be natural in the homotopy category $\ho \dgcat$. (See Remark~\ref{remark. yoneda not natural yet}.)
\end{theorem}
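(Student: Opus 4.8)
The plan is to establish the chain (a)$\Leftrightarrow$(b)$\Leftrightarrow$(c) as a backbone and then bootstrap to (d), (e), (f). To begin, recall that $\dgcat\simeq\Ainftycat$ is presentably symmetric monoidal (Remark~\ref{remark. Ainftycat is presentably symmetric monoidal}), so $\underline{\hom}_{\dgcat}(A,B)$ exists, and by Proposition~\ref{prop. monoidal structure is derived dg tensor} the monoidal product is the derived tensor product of dg-categories. To\"en's computation of internal homs~\cite{toen-homotopy-theory-of-dg-cats}, carried out over an arbitrary base ring, then identifies $\underline{\hom}_{\dgcat}(A,B)$ with the dg-category of right quasirepresentable cofibrant $(A^{\circ})^{\op}\tensor B$-modules for a cofibrant replacement $A^{\circ}\to A$; over a general $\kk$ one replaces ``cofibrant'' by ``homotopically projective'', which is harmless because homotopically projective modules satisfy the module-theoretic analogue of Proposition~\ref{prop. whitehead for projectives}, so the two module dg-categories share the same underlying $\infty$-category. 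Since cofibrant dg-categories have homotopically projective morphism complexes (Notation~\ref{notation. dgcat versions}), this yields (a)$\Leftrightarrow$(b) for the canonical choice $A'=A^{\circ}$. For (b)$\Leftrightarrow$(c) I would use the graph construction: a unital $A_\infty$-functor $A'\to B$ has a graph bimodule over $(A')^{\op}\tensor B$ which is always right quasirepresentable and, when $A'$ has homotopically projective morphism complexes, is itself homotopically projective (its underlying graded module is built from the $\hom_{A'}$ and $\hom_B$ by a bar-type formula, and homotopically projective complexes are homotopically flat by Remark~\ref{remark. good properties of h projectives}); this assignment underlies a quasi-equivalence of dg-categories, which is the content of~\cite{cos-over-rings} and whose argument goes through verbatim with ``homotopically projective'' in place of ``cofibrant''.

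Next I would address independence of the auxiliary object and naturality. Given two quasi-equivalences $A'_1\to A$ and $A'_2\to A$ from dg-categories with homotopically projective morphism complexes, I would first reduce to the case where both are cofibrant (using Proposition~\ref{prop. pullback along equivalence is equivalence}), then lift both along a cofibrant replacement $A^{\circ}\to A$ to get a common refinement, and invoke once more the invariance of functor categories under quasi-equivalences between $A_\infty$-categories with homotopically projective morphism complexes, Proposition~\ref{prop. pullback along equivalence is equivalence} (which rests on the Whitehead theorem Proposition~\ref{prop. whitehead for a infty}). This shows the dg-category in (c) is well-defined up to equivalence independently of $A'$, and the dg-category in (b) then inherits the same property via (b)$\Leftrightarrow$(c); both agree with (a) by the previous paragraph. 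Using a functorial cofibrant replacement $Q$ of dg-categories --- available since the Tabuada model structure is combinatorial --- the assignment $A\mapsto\fun_{A_\infty}(Q(A),B)$ (and likewise its bimodule counterpart) becomes a functor on $\ho\dgcat$, contravariant in $A$ and natural in $B$, equipped with a natural equivalence to $\underline{\hom}_{\dgcat}(A,B)$; this is the claimed naturality for the equivalences among (a), (b), (c).

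For (e)$\Leftrightarrow$(c) the point is that when $A'$ has split units --- which holds for any cofibrant dg-category, as the unit there is the image of a free generator of a semifree model and such a splitting is inherited by retracts --- a splitting lets one correct any unital $A_\infty$-functor $A'\to B$, and any natural transformation, to a strictly unital one without altering its homotopy class; hence $\fun_{A_\infty}^{\str}(A',B)\hookrightarrow\fun_{A_\infty}(A',B)$ is a quasi-equivalence, and being natural in $A'$ this slots (e) in naturally. The genuinely $A_\infty$ step is (c)$\Leftrightarrow$(d): given a unital $A_\infty$-category $A''$ with homotopically projective morphism complexes and a quasi-equivalence $A''\to A$, I would pass through the Yoneda embedding $A''\to\cY(A'')$, which realizes $A''$ as equivalent --- by a genuine homotopy-inverse $A_\infty$-functor --- to a full subcategory of a dg-category (as in Remark~\ref{remark. hp replacements} and the proof of Proposition~\ref{prop. homotopic cones}); precomposition along such an honest equivalence is a quasi-equivalence of functor categories requiring no homotopical-projectivity hypothesis. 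Combining this with a homotopically projective dg replacement produced as in Remark~\ref{remark. hp replacements} and one further application of Proposition~\ref{prop. pullback along equivalence is equivalence} reduces the computation of $\fun_{A_\infty}(A'',B)$ to (c). Finally (f)$\Leftrightarrow$(d) is the split-units strictification argument once more, now applied to $A''$. Since $\cY$ has not been promoted to a functor of $A_\infty$-categories (Remark~\ref{remark. yoneda not natural yet}), these last reductions cannot be made coherently over $\ho\dgcat$, which is precisely why the naturality claim excludes the equivalences touching (d) and (f).

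The formal $\cY$-point aside, I expect the main obstacle to be the backbone: porting~\cite{cos-over-rings} from a field to an arbitrary commutative ring and from cofibrant to homotopically projective dg-categories, together with the invariance results --- all of which depend on a robust Whitehead theory for $A_\infty$-categories with homotopically projective morphism complexes (Section~\ref{section. whitehead style}) and on rectifying homotopically projective $A_\infty$-bimodules over dg-categories to dg-modules. Once that machinery is in place, the passages to (d), (e), (f) and the naturality bookkeeping are comparatively formal.
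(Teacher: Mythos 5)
Your proposal follows essentially the same route as the paper's proof: identify the homotopy-category monoidal structure with the derived tensor product (Proposition~\ref{prop. monoidal structure is derived dg tensor}), invoke To\"en/Canonaco--Stellari and Canonaco--Ornaghi--Stellari for the models in (b) and (c), and bootstrap (d)--(f) via the Yoneda embedding, the Whitehead-style Propositions~\ref{prop. homotopically projective abstract equivalences are equivalences} and~\ref{prop. pullback along equivalence is equivalence}, and split-unit strictification. The only cosmetic differences are that you route (b)$\Leftrightarrow$(c) through graph bimodules rather than identifying each model directly as an internal hom for the derived tensor product, and that you propose to re-derive the cited inputs and the independence-of-$A'$ statement, which the paper obtains for free from the universal property of the internal hom object.
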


We also compute mapping spaces. In what follows, $N$ denotes the dg-nerve, and for an $\infty$-category $X$, $X^\sim$ denotes the largest Kan complex inside $X$.

\begin{theorem}
\label{theorem. maps in dgcat}
Let $A$ and $B$ be dg-categories. Then the following Kan complexes are homotopy equivalent:
	\begin{enumerate}[(a)]
	\item\label{item. hom space in dgcat} The mapping space $\hom_{\dgcat}(A,B)$.
	\item\label{item. hom space in dgcat rqr cofib modules}  For any quasi-equivalence of dg-categories $A' \to A$ with $A'$ a dg-category with homotopically projective morphism complexes, the space $N(h-proj((A')^{\op} \tensor B)^{rqr})^{\sim}$ of right quasirepresentable homotopically projective bimodules. 
	\item\label{item. hom space in dgcat unital cofibrant Aoo functors} For any  $A' \to A$ as above, the space $N(\fun_{A_\infty}(A',B))^{\sim}$ of unital $A_\infty$-functors from $A'$ to $B$.	
	\item\label{item. hom space unital projective A'} For any quasi-equivalence of $A_\infty$-categories $A'' \to A$ with $A''$ a unital $A_\infty$-category with homotopically projective morphism complexes, the space $N(\fun_{A_\infty}(A'',B))^{\sim}$ of unital $A_\infty$-functors from $A''$ to $B$.
	\item\label{item. hom space in dgcat strictly unital split unit Aoo functors} For any quasi-equivalence $A' \to A$ for which $A'$ is a dg-category with split units and  homotopically projective morphism complexes, the space $N(\fun_{A_\infty}^{\str}(A',B))^{\sim}$ of strictly unital $A_\infty$-functors from $A'$ to $B$.
	\item\label{item. hom space unital projective split units A'}  For any  strictly unital quasi-equivalence of $A_\infty$-categories $A'' \to A$ with $A''$ a strictly unital $A_\infty$-category with homotopically projective morphism complexes and split units, the space $N(\fun_{A_\infty}^{\str}(A'',B))^{\sim}$ of strictly unital $A_\infty$-functors from $A''$ to $B$.
	\enumd
All equivalences except those involving \eqref{item. hom space unital projective A'} and \eqref{item. hom space unital projective split units A'} can be chosen to be natural with respect to arrows in the homotopy category $\ho \dgcat$. (See Remark~\ref{remark. yoneda not natural yet}.)
\end{theorem}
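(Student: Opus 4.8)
The plan is to deduce Theorem~\ref{theorem. maps in dgcat} from Theorem~\ref{theorem. internal homs in dgcat} by the standard passage from internal hom objects to mapping spaces via the monoidal unit. Since $\dgcat$ is presentably symmetric monoidal and closed (Remark~\ref{remark. Ainftycat is presentably symmetric monoidal} and~\eqref{eqn. internal hom}), for any dg-categories $A,B$ there is a chain of natural equivalences of Kan complexes
\[
\hom_{\dgcat}(A,B) \simeq \hom_{\dgcat}(\mathbf 1 \tensor A, B) \simeq \hom_{\dgcat}(\mathbf 1, \underline{\hom}_{\dgcat}(A,B)),
\]
where $\mathbf 1$ denotes the monoidal unit of $\dgcat$, modeled in $\dgcatt_{\homcof}$ by the one-object dg-category with endomorphism dga $\kk$. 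This identification is natural in $A$ and $B$ already at the level of $\infty$-categories, hence a fortiori with respect to arrows of $\ho\dgcat$.

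The second step is to identify $\hom_{\dgcat}(\mathbf 1, E)$ with $N(E)^{\sim}$ for every dg-category $E$. The free functor $F\colon \inftyCat \to \inftyCat^{\cV}$ of~\eqref{eqn. free dg category} is symmetric monoidal, so it carries the terminal $\infty$-category $\ast$ -- the monoidal unit of $\inftyCat$ -- to $\mathbf 1$; and its right adjoint, the underlying-$\infty$-category functor, is for $\cV$ the $\infty$-category of chain complexes the dg-nerve $N$ (Proposition~\ref{prop. dg nerve is right adjoint}). Therefore
\[
\hom_{\dgcat}(\mathbf 1, E) \simeq \hom_{\inftyCat}(\ast, N(E)) \simeq N(E)^{\sim},
\]
the last equivalence being the standard fact that the space of maps out of the point in $\inftyCat$ computes the core $\cD \mapsto \cD^{\sim}$. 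All of this is functorial in $E$, so $E \mapsto N(E)^{\sim}$ descends to a functor $\ho\dgcat \to \ho\spaces$ and the displayed equivalences are natural in $E$.

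Combining the two steps gives $\hom_{\dgcat}(A,B) \simeq N(\underline{\hom}_{\dgcat}(A,B))^{\sim}$, naturally in $\ho\dgcat$. It now suffices to feed in each model of $\underline{\hom}_{\dgcat}(A,B)$ supplied by Theorem~\ref{theorem. internal homs in dgcat}: for a homotopically projective dg-replacement $A'\to A$ one has $\underline{\hom}_{\dgcat}(A,B)\simeq \fun_{A_\infty}(A',B)$ by Theorem~\ref{theorem. internal homs in dgcat}\eqref{item. internal hom in dgcat unital cofibrant Aoo functors}, whence $\hom_{\dgcat}(A,B)\simeq N(\fun_{A_\infty}(A',B))^{\sim}$, which is part~\eqref{item. hom space in dgcat unital cofibrant Aoo functors}; and parts~\eqref{item. hom space in dgcat rqr cofib modules}, \eqref{item. hom space unital projective A'}, \eqref{item. hom space in dgcat strictly unital split unit Aoo functors}, \eqref{item. hom space unital projective split units A'} follow identically from the corresponding parts of Theorem~\ref{theorem. internal homs in dgcat}. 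The naturality claims transport verbatim, since those identifications that Theorem~\ref{theorem. internal homs in dgcat} provides naturally in $\ho\dgcat$ remain natural after applying the functor $N(-)^{\sim}$, and the parts involving a general $A_\infty$-replacement $A''$ inherit the same absence of a functorial choice of replacement.

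The main obstacle lies not in this argument -- which is purely formal given its inputs -- but upstream, in Theorem~\ref{theorem. internal homs in dgcat} and Proposition~\ref{prop. dg nerve is right adjoint}. The one bookkeeping point worth isolating here is the compatibility of the monoidal unit with the free functor: one needs $\mathbf 1$ to literally be $F(\ast)$, not merely abstractly equivalent to it in a way that might corrupt the adjunction, which is precisely the statement that the symmetric monoidal functor $F$ preserves units, together with the identification of the unit of $\inftyCat^{\cV}$ with the one-object $\cV$-category on $1_{\cV}$.
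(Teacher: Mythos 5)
Your proposal is correct and follows essentially the same route as the paper: the paper likewise tensors with the unit dg-category $\kk\tensor[0]$, applies the internal-hom adjunction, passes across the free/underlying adjunction using Proposition~\ref{prop. dg nerve is right adjoint} to obtain $N(\underline{\hom}(A,B))^{\sim}$, and then substitutes the models from Theorem~\ref{theorem. internal homs in dgcat}. The only cosmetic difference is how quasi-equivalence invariance of $E\mapsto N(E)^{\sim}$ is justified (the paper cites that $N$ is right Quillen and all dg-categories are fibrant, while you get it for free from the identification with $\hom_{\dgcat}(\mathbf 1,-)$), which changes nothing of substance.
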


Below are the $A_\infty$ analogues:

\begin{theorem}
\label{theorem. internal homs in Aoocat}
Let $A$ and $B$ be unital $A_\infty$-categories. Then the following $A_\infty$-categories are equivalent in $\Ainftycat$:
	\begin{enumerate}[(a)]
	\item\label{item. internal hom in Aoocat} The internal hom $\underline{\hom}_{\Ainftycat}(A,B)$ in the $\infty$-category of $A_\infty$-categories.
	\item\label{item. Aoo internal hom unital projective A'} For any quasi-equivalence of $A_\infty$-categories $A' \to A$ with $A'$ a unital $A_\infty$-category with homotopically projective morphism complexes, the $A_\infty$-category $\fun_{A_\infty}(A',B)$ of unital $A_\infty$-functors from $A'$ to $B$.  
	\item\label{item. Aoo internal hom unital projective split units A'}  Supposing $B$ is strictly unital, and fixing any  strictly unital quasi-equivalence of $A_\infty$-categories $A' \to A$ with $A'$ a strictly unital $A_\infty$-category with homotopically projective morphism complexes and split units: the $A_\infty$-category $\fun_{A_\infty}^{\str}(A',B)$ of strictly unital $A_\infty$-functors from $A'$ to $B$. 
	\enumd
\end{theorem}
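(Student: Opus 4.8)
The plan is to deduce this theorem from Theorem~\ref{theorem. internal homs in dgcat} by transporting its conclusions across the symmetric monoidal equivalence $i\colon \dgcat \to \Ainftycat$ (Remark~\ref{remark. Ainftycat is presentably symmetric monoidal}), together with two observations. The first is formal: a symmetric monoidal equivalence $F\colon \cC \to \cD$ between presentably symmetric monoidal (hence closed) $\infty$-categories preserves internal homs, since for every object $D$ of $\cD$ one has natural equivalences
\[
\hom_{\cD}(D,\, F\,\underline{\hom}_{\cC}(X,Y)) \simeq \hom_{\cC}(F^{-1}D \tensor X,\, Y) \simeq \hom_{\cD}(D \tensor FX,\, FY) \simeq \hom_{\cD}(D,\, \underline{\hom}_{\cD}(FX,FY)),
\]
and the Yoneda lemma gives $F\,\underline{\hom}_{\cC}(X,Y) \simeq \underline{\hom}_{\cD}(FX,FY)$. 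As the symmetric monoidal structure on $\Ainftycat$ is by construction (Section~\ref{section. internal homs exist}) transported from $\dgcat$ along $i$, the functor $i$ preserves internal homs. The second observation is the only genuinely new ingredient: for an $A_\infty$-category $A'$ with homotopically projective morphism complexes, post-composition along a quasi-equivalence $B \to \widetilde{B}$ induces a quasi-equivalence $\fun_{A_\infty}(A',B) \to \fun_{A_\infty}(A',\widetilde{B})$, and likewise on strictly unital functor categories when all categories and functors involved are strictly unital.

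Granting these, here is the argument for part~\eqref{item. Aoo internal hom unital projective A'}. Let $A$ and $B$ be unital $A_\infty$-categories. By Remark~\ref{remark. hp replacements} the Yoneda embeddings $A \to \cY(A)$ and $B \to \cY(B)$ are quasi-equivalences onto dg-categories, so $A$ and $B$ are equivalent in $\Ainftycat$ to $\cY(A)$ and $\cY(B)$, and therefore
\[
\underline{\hom}_{\Ainftycat}(A,B) \simeq \underline{\hom}_{\Ainftycat}(\cY(A),\cY(B)) \simeq i\bigl(\underline{\hom}_{\dgcat}(\cY(A),\cY(B))\bigr).
\]
Now let $A' \to A$ be any quasi-equivalence with $A'$ unital and with homotopically projective morphism complexes; the composite $A' \to A \to \cY(A)$ is again such a quasi-equivalence, so Theorem~\ref{theorem. internal homs in dgcat}\eqref{item. internal hom unital projective A'} identifies $\underline{\hom}_{\dgcat}(\cY(A),\cY(B))$ with the dg-category $\fun_{A_\infty}(A',\cY(B))$. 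Applying $i$ and suppressing the inclusion of dg-categories into $\Ainftycat$, we obtain $\underline{\hom}_{\Ainftycat}(A,B) \simeq \fun_{A_\infty}(A',\cY(B))$, and the target-variable invariance above then replaces $\cY(B)$ by $B$, yielding $\underline{\hom}_{\Ainftycat}(A,B) \simeq \fun_{A_\infty}(A',B)$. Part~\eqref{item. Aoo internal hom unital projective split units A'} follows by the identical reduction, with Theorem~\ref{theorem. internal homs in dgcat}\eqref{item. internal hom unital projective split units A'} in place of \eqref{item. internal hom unital projective A'}, giving $\underline{\hom}_{\Ainftycat}(A,B) \simeq \fun_{A_\infty}^{\str}(A',B)$; here one uses a strictly unital dg-model of $B$ (the Yoneda embedding of a strictly unital $A_\infty$-category is strictly unital, carrying strict units to identity module morphisms; alternatively one invokes Theorem~\ref{theorem. main theorem} together with a strictly unital cofibrant dg-replacement of $B$), so that strict unitality survives the target replacement.

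The hard part is not this transport but the two inputs it rests on. Theorem~\ref{theorem. internal homs in dgcat} is where the essential work lies. Among the remaining steps, the one specific to the present statement is the target-variable invariance of $\fun_{A_\infty}(A',-)$: one must check that post-composition with a quasi-equivalence $B \to \widetilde{B}$ is a quasi-isomorphism on natural-transformation complexes -- this uses that the morphism complexes of $A'$, and hence their tensor powers, are homotopically projective, so that $\hom(-,\text{quasi-isomorphism})$ remains a quasi-isomorphism -- and that it is essentially surjective on homotopy categories of functors, i.e.\ that every functor $A' \to \widetilde{B}$ lifts through $B \to \widetilde{B}$ up to natural equivalence, which is a standard obstruction-theoretic argument again powered by homotopical projectivity of $A'$. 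For part~\eqref{item. Aoo internal hom unital projective split units A'} the only additional care is bookkeeping: arranging that the dg-model of $B$ is reached by a strictly unital quasi-equivalence, so that no strictness is lost in passing to and from the dg case.
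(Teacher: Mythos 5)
Your skeleton matches the paper's: transport internal homs along the symmetric monoidal equivalence $i:\dgcat\to\Ainftycat$, use the Yoneda embedding to replace $A$ and $B$ by dg-categories, feed the resulting data into Theorem~\ref{theorem. internal homs in dgcat}, and then undo the replacements. Two sub-steps diverge, and it is worth comparing them. First, for the target variable: you posit a general invariance of $\fun_{A_\infty}(A',-)$ under arbitrary quasi-equivalences $B\to\widetilde B$ (for $A'$ with homotopically projective morphism complexes), whose essential-surjectivity half -- lifting a functor $A'\to\widetilde B$ through $B\to\widetilde B$ up to natural equivalence -- is a genuinely nontrivial Whitehead-type statement that you only gesture at. The paper never needs this generality: the only target replacement that occurs is $B\to\cY(B)$, which is invertible up to natural equivalence by the Yoneda lemma, so post-composition with it and with its inverse are immediately mutually inverse up to natural equivalence (the target-variable analogue of Proposition~\ref{prop. pullback along equivalence is equivalence}). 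You should either restrict your claim to this special case or supply the lifting argument; as written this is the weakest link. Second, for part~\eqref{item. Aoo internal hom unital projective split units A'}: you rerun the whole reduction through Theorem~\ref{theorem. internal homs in dgcat}\eqref{item. internal hom unital projective split units A'}, which forces you to track strict unitality of the Yoneda embeddings and of the composite $A'\to A\to\cY(A)$, and to prove a strictly unital version of the target invariance (note the homotopy inverse $\cY(B)\to B$ need not be strictly unital, so your post-composition argument does not dualize directly). The paper instead deduces \eqref{item. Aoo internal hom unital projective split units A'} from \eqref{item. Aoo internal hom unital projective A'} in one stroke, by showing the inclusion $\fun_{A_\infty}^{\str}(A',B)\hookrightarrow\fun_{A_\infty}(A',B)$ is an equivalence of $A_\infty$-categories (fully faithful by definition, essentially surjective by Lemma~4.2 of the Canonaco--Ornaghi--Stellari paper), which sidesteps all of that bookkeeping. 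A minor, harmless difference: you invoke item~\eqref{item. internal hom unital projective A'} of the dg theorem directly, whereas the paper routes through item~\eqref{item. internal hom in dgcat unital cofibrant Aoo functors} plus a cofibrant dg-replacement $\tilde{A'}$ and Propositions~\ref{prop. whitehead for a infty} and~\ref{prop. pullback along equivalence is equivalence}; your shortcut is legitimate since that item is already part of the dg theorem.
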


As a corollary we obtain mapping space computations. Below, $N$ stands for the $A_\infty$ nerve (Recollection~\ref{recollection. Aoo nerve}).

\begin{theorem}
\label{theorem. mapping spaces in Aoocat}
Let $A$ and $B$ be unital $A_\infty$-categories. Then the following Kan complexes are homotopy equivalent:
	\begin{enumerate}[(a)]
	\item\label{item. maps in Aoocat} The space $\hom_{\Ainftycat}(A,B)$ in the $\infty$-category of $A_\infty$-categories.
	\item\label{item. Aoo maps unital projective A'} For any quasi-equivalence of $A_\infty$-categories $A' \to A$ with $A'$ a unital $A_\infty$-category with homotopically projective morphism complexes, the space $N(\fun_{A_\infty}(A',B))^{\sim}$ of unital $A_\infty$-functors from $A'$ to $B$.  
	\item\label{item. Aoo maps unital projective split units A'} Supposing $B$ is strictly unital, and fixing any  strictly unital quasi-equivalence of $A_\infty$-categories $A' \to A$ with $A'$ a strictly unital $A_\infty$-category with homotopically projective morphism complexes and split units: the space $N(\fun_{A_\infty}^{\str}(A',B))^{\sim}$ of strictly unital $A_\infty$-functors from $A'$ to $B$. 
	\enumd
\end{theorem}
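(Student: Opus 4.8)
The plan is to deduce Theorem~\ref{theorem. mapping spaces in Aoocat} from the internal-hom computation of Theorem~\ref{theorem. internal homs in Aoocat} together with the purely formal relationship, valid in any presentably monoidal $\infty$-category, between mapping spaces and internal homs. Recall (Remark~\ref{remark. Ainftycat is presentably symmetric monoidal} and Section~\ref{section. internal homs exist}) that $\Ainftycat$ is presentably symmetric monoidal and hence has internal homs, so that, writing $1_{\Ainftycat}$ for the monoidal unit, the adjunction~\eqref{eqn. internal hom} gives for any $A,B$ a natural equivalence
\[
\hom_{\Ainftycat}(A,B)\ \simeq\ \hom_{\Ainftycat}\bigl(1_{\Ainftycat}\tensor A,\, B\bigr)\ \simeq\ \hom_{\Ainftycat}\bigl(1_{\Ainftycat},\, \underline{\hom}_{\Ainftycat}(A,B)\bigr).
\]
It therefore suffices to identify the functor $\hom_{\Ainftycat}(1_{\Ainftycat},-)\colon \Ainftycat \to \spaces$ with $D \mapsto N(D)^{\sim}$, and then to substitute Theorem~\ref{theorem. internal homs in Aoocat}.

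First I would observe that the functor $\spaces \to \Ainftycat$ carrying $\Delta^0$ to $1_{\Ainftycat}$ factors as $\spaces \into \inftyCat \to \Ainftycat$, where the first arrow is the inclusion of $\infty$-groupoids and the second is~\eqref{eqn. free dg category} composed with the equivalence $\dgcat \simeq \Ainftycat$; both arrows are symmetric monoidal (the second by Proposition~5.7.8 of~\cite{gepner-haugseng}, as recorded in the excerpt), so each sends units to units. Passing to right adjoints, $\hom_{\Ainftycat}(1_{\Ainftycat},-)$ is the composite of the core functor $(-)^{\sim}\colon \inftyCat \to \spaces$ after the right adjoint of $\inftyCat \to \Ainftycat$. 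The latter right adjoint is the ``underlying $\infty$-category'' functor, which by Proposition~\ref{prop. dg nerve is right adjoint} is computed on dg-categories by the dg nerve. Since the $A_\infty$-nerve $N$ (Recollection~\ref{recollection. Aoo nerve}) agrees with the dg nerve on dg-categories, sends quasi-equivalences to categorical equivalences, and every object of $\Ainftycat$ is equivalent to a dg-category via the Yoneda embedding (Remark~\ref{remark. hp replacements}), the functor $N$ models the underlying $\infty$-category functor $\Ainftycat \to \inftyCat$. Hence $\hom_{\Ainftycat}(1_{\Ainftycat}, D) \simeq N(D)^{\sim}$ for every $D$.

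Combining the displayed equivalence with Theorem~\ref{theorem. internal homs in Aoocat}\eqref{item. Aoo internal hom unital projective A'} — and using that $N$ and $(-)^{\sim}$ each carry equivalences to equivalences — gives $\hom_{\Ainftycat}(A,B) \simeq N(\fun_{A_\infty}(A',B))^{\sim}$ for any quasi-equivalence $A'\to A$ from a unital $A_\infty$-category with homotopically projective morphism complexes, which is part~\eqref{item. Aoo maps unital projective A'}. Under the additional hypothesis that $B$ is strictly unital, substituting Theorem~\ref{theorem. internal homs in Aoocat}\eqref{item. Aoo internal hom unital projective split units A'} instead yields $\hom_{\Ainftycat}(A,B) \simeq N(\fun_{A_\infty}^{\str}(A',B))^{\sim}$, which is part~\eqref{item. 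Aoo maps unital projective split units A'}; since all of these spaces are thereby exhibited as homotopy equivalent, the proof is complete.

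The step I expect to be the main obstacle is the identification, in the second paragraph, of the abstract ``underlying $\infty$-category'' right adjoint of $\inftyCat \to \Ainftycat$ with the point-set $A_\infty$-nerve $N$ — that is, matching a formalism-driven Gepner--Haugseng adjunction with Lurie's explicit nerve construction. For dg-categories this is precisely Proposition~\ref{prop. dg nerve is right adjoint}; reducing the general $A_\infty$ case to the dg case must be carried out with care, since $\fun_{A_\infty}(A',B)$ is a genuine (non-dg) $A_\infty$-category whenever $B$ is not a dg-category, so it is literally the $A_\infty$-nerve, and not the dg nerve, that appears in the statement, and one must check that the reduction via the Yoneda embedding is compatible with the monoidal structures and with the formation of cores.
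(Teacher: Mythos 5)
Your proposal is correct and follows essentially the same route as the paper: the paper's proof of Theorem~\ref{theorem. mapping spaces in Aoocat} (given jointly with Theorem~\ref{theorem. maps in dgcat}) tensors with the monoidal unit, applies the internal-hom adjunction~\eqref{eqn. internal hom}, identifies $\hom(1,-)$ with $N(-)^{\sim}$ via the right-adjoint identification of Propositions~\ref{prop. dg nerve is right adjoint} and~\ref{prop. A infty nerve is right adjoint}, and then substitutes the internal-hom computations. The ``main obstacle'' you flag at the end is exactly what Proposition~\ref{prop. A infty nerve is right adjoint} (via Recollection~\ref{recollection. Aoo nerve} and the equivalence $i:\dgcat\to\Ainftycat$) is there to dispose of, so no new work is needed there.
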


\begin{remark}
\label{remark. yoneda not natural yet}
The reader will note that the term ``naturally'' is missing in the statement of Theorem~\ref{theorem. internal homs in Aoocat} and \ref{theorem. mapping spaces in Aoocat}. This is because our proof invokes the Yoneda embedding $A \to \cY(A)$ for $A_\infty$-categories, and the naturality of this embedding does not yet seem to appear in the literature. This is the same reason that naturality was not attained in some of the equivalences in Theorem~\ref{theorem. internal homs in dgcat} and \ref{theorem. maps in dgcat}.

To orient the reader further, let us say that all internal-hom results about $\Ainftycat$ are being bootstrapped from the results about internal homs in $\dgcat$. To do this, one needs a way to turn $A_\infty$-categories into dg-categories, and this is why we invoke the Yoneda embedding.
\end{remark}

We will prove the above theorems in Section~\ref{section. internal hom computations}; we set up some notation and machinery in the interim.

\subsection{The dg nerve is the underlying $\infty$-category}
\begin{prop}\label{prop. dg nerve is right adjoint}
Let $\cV$ be the $\infty$-category of chain complexes over some commutative ring $\kk$. Then the right adjoint to~\eqref{eqn. free dg category} is (equivalent to the functor induced by) the dg nerve. 
\end{prop}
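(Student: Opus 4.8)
The plan is to identify this right adjoint by realizing both it and the dg nerve, at the point-set level, as (a localization of) the homotopy-coherent nerve of the simplicial category obtained by applying, on morphism complexes, the lax monoidal functor right adjoint to normalized chains.

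First I would pin down what the right adjoint to~\eqref{eqn. free dg category} actually is. By the Remark preceding the statement, it is computed by passing to localizations from the right adjoint of~\eqref{eqn. free algcat set}, and by Gepner--Haugseng that right adjoint is $\algcat(R)_{\Set}$, where $R\colon\cV\to\spaces$ is the lax symmetric monoidal functor right adjoint to the symmetric monoidal functor $C_{*}=C_{*}(-;\kk)\colon\spaces\to\cV$ identified in the preceding Proposition. Abstractly $R(M)\simeq\Map_{\cV}(\kk,M)$, the space with $\pi_{n}R(M)\cong H^{-n}(M)$; a convenient $1$-categorical model, using that every object of $\chain$ is fibrant in the projective model structure and that $N_{*}(\Delta^{\bullet};\kk)\to\kk$ is a Reedy-cofibrant cosimplicial resolution, is $R(M)_{n}=\{\text{chain maps }N_{*}(\Delta^{n};\kk)\to M\}$, equipped with the Alexander--Whitney lax (not symmetric) monoidal structure. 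This makes $R$ a lax monoidal functor of $1$-categories, hence $\cD\mapsto R_{*}\cD$ (same objects, mapping spaces $R(\hom_{\cD}(x,y))$) a $1$-functor from $\dgcatt$ to simplicial categories. Because dg-categories have discrete spaces of objects, the desired right adjoint is therefore modeled on $\dgcatt$ by $\cD\mapsto\mathfrak{N}(R_{*}\cD)$, where $\mathfrak{N}$ is the homotopy-coherent nerve, using that the equivalence $\algcat(\spaces)_{\Set}[\FFES^{-1}]\simeq\inftyCat$ carries a discrete-object simplicial category to its homotopy-coherent nerve (Section~4.4 of~\cite{gepner-haugseng} and Bergner's theorem).

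Second, and this is the technical heart, I would invoke the comparison due to Lurie (around Construction~1.3.1.6 of~\cite{higher-algebra} and the surrounding results; see also~\cite{kerodon}) that for every dg-category $\cD$ there is a natural map $N^{\mathrm{dg}}(\cD)\to\mathfrak{N}(R_{*}\cD)$ which is the identity on objects and an equivalence of $\infty$-categories: on mapping spaces it realizes Lurie's identification $\Map_{N^{\mathrm{dg}}(\cD)}(x,y)\simeq K(\hom_{\cD}(x,y))$, once one checks that Lurie's functor $K$ agrees (naturally, and compatibly with the lax monoidal structures so that composition is respected) with the functor $R$ above, both being modeled by $\hom_{\chain}(N_{*}(\Delta^{\bullet};\kk),-)$. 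The cleanest way to produce the comparison map itself is via mates of adjunctions: $C_{*}\dashv R$ upgrades (Eilenberg--Zilber versus Alexander--Whitney) to an adjunction between simplicial categories and dg-categories whose right adjoint is $R_{*}$, while $N^{\mathrm{dg}}$ admits a left adjoint $\mathfrak{C}^{\mathrm{dg}}$; comparing $\mathfrak{C}^{\mathrm{dg}}$ with the composite of Cordier's left adjoint to $\mathfrak{N}$ followed by $\kk$-linearization of simplicial enrichments yields the natural transformation, which one then checks is an equivalence on mapping spaces. I then assemble: both $N^{\mathrm{dg}}$ and $\mathfrak{N}\circ R_{*}$ send quasi-equivalences of dg-categories to categorical equivalences of quasi-categories (because $R$ preserves quasi-isomorphisms, and $H^{0}$-essential-surjectivity of a quasi-equivalence is exactly essential surjectivity of the induced functor of dg nerves), so both descend to functors $\dgcatt[\eqs^{-1}]\simeq\dgcat\to\inftyCat$ agreeing by the previous sentence; combined with the first step's identification, the right adjoint to~\eqref{eqn. free dg category} is the functor induced by the dg nerve.

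The step I expect to be the main obstacle is the second one: carefully matching Lurie's combinatorial dg nerve with the homotopy-coherent nerve of $R_{*}\cD$, in particular verifying that the point-set functor $\hom_{\chain}(N_{*}(\Delta^{\bullet};\kk),-)$ appearing in Lurie's mapping-space formula genuinely models $\Map_{\cV}(\kk,-)$ and is lax monoidal compatibly with the simplicial structure on the dg nerve, so that the comparison map is natural and multiplicative. Everything else is formal once that identification is in hand, modulo bookkeeping of cohomological versus homological conventions, connective (co)truncations, and the Dold--Kan correspondence.
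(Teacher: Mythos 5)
Your proposal is correct in outline, but it takes a genuinely different route from the paper. You work on the \emph{right} adjoints directly: you identify the right adjoint to~\eqref{eqn. free dg category} as change of enrichment along $R=\Map_{\cV}(\kk,-)$ (modeled by $\hom_{\chain}(N_*(\Delta^\bullet;\kk),-)$ with its Alexander--Whitney lax monoidal structure), realize it as the homotopy-coherent nerve of the Dold--Kan simplicial category $R_*\cD$, and then invoke the point-set comparison of the dg nerve with that homotopy-coherent nerve. The paper instead compares the \emph{left} adjoints: since the dg nerve is right Quillen, its derived left adjoint and~\eqref{eqn. free dg category} are both colimit-preserving functors out of $\inftyCat$, which is generated under colimits by $\underline{\Delta}$; so it suffices to check that both send $\Delta^k$ to (something quasi-equivalent to) $\kk\tensor[k]$, which is an explicit computation with $\mathfrak{C}^{\mathrm{dg}}[\Delta^k]$ on one side and $C_*(pt)\cong\kk$ on the other. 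The paper's argument is considerably lighter: it needs no rectification of simplicial categories, no doctrinal adjunction for enriched $\infty$-categories, and no dg-nerve-versus-coherent-nerve comparison --- precisely the step you correctly flag as your main obstacle (it does exist in the literature, e.g.\ in~\cite{kerodon}, together with the identification of Lurie's mapping-space functor with $\hom_{\chain}(N_*(\Delta^\bullet;\kk),-)$, so your route closes, but it imports substantially more machinery). What your approach buys in exchange is an explicit point-set description of the right adjoint on \emph{all} dg-categories and a proof that carries the mapping-space formula $\Map_{N(\cD)}(x,y)\simeq DK(\tau\hom_{\cD}(x,y))$ along with it, rather than deducing it afterwards as the paper does in the remark following Proposition~\ref{prop. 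A infty nerve is right adjoint}. One small point to watch in your write-up: the adjunction of $\infty$-categories induced by the Quillen pair, and the identification of the localized $\algcat(\spaces)_{\Set}$ with $\inftyCat$ via the coherent nerve, must both be taken relative to the \emph{same} model of $\inftyCat$, and the truncation $\tau_{\geq 0}$ implicit in $\hom_{\chain}(N_*(\Delta^\bullet;\kk),-)$ should be stated explicitly since $\cV$ consists of unbounded complexes.
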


\begin{proof}
The dg nerve is a functor from the 1-category of dg-categories to the 1-category of simplicial sets, and is a right Quillen functor.\footnote{Proposition~1.3.1.20 of~\cite{higher-algebra}. Note that Lurie utilizes the Tabuada model structure there.} This Quillen adjunction thus determines\footnote{See Proposition~1.5.1 of~\cite{hinich-dwyer-kan-revisited} and Theorem~2.1 of~\cite{mazel-gee-quillen-adjunctions}.} an adjunction of $\infty$-categories
	\eqnn
	\LL: \inftyCat \iff \dgcat.
	\eqnnd
We wish to see that the left adjoint $\LL$  of this adjunction is equivalent to~\eqref{eqn. free dg category}. Let $\sset$ be the usual 1-category of simplicial sets and let $\sset \to \inftyCat = \sset[\eqs^{-1}]$ be the localization with respect to weak equivalences in the Joyal model structure. Letting $\underline{\Delta}$ denote the usual category of finite non-empty linear ordinals, both
	\eqnn
	\underline{\Delta} \to \sset
	\qquad\text{and the composition}\qquad
	\underline{\Delta} \to \sset \to \inftyCat
	\eqnnd
are fully faithful functors. $\inftyCat$ is generated by $\underline{\Delta}$ under colimits, so to see that the two left adjoints  $\LL$ and~\eqref{eqn. free dg category} are equivalent, it suffices to produce a natural equivalence between the composites
	\eqn\label{eqn. reduction of left adjoint to simplices}
	\underline{\Delta} \to \sset \to \inftyCat \xrightarrow{\LL} \dgcat
	\qquad
	\text{and}
	\qquad
	\underline{\Delta} \to \sset \to \inftyCat \xrightarrow{\eqref{eqn. free dg category}} \dgcat.
	\eqnd
The lefthand composite is easy to compute because we can compute the left adjoint to the dg nerve construction: The image $\LL[\Delta^k]$ of $\Delta^k$ is the dg category corepresenting the data of Construction~1.3.16 in~\cite{higher-algebra}.\footnote{We warn that loc. cit. uses homological conventions, while the present paper uses cohomological conventions.} 
 For every $i<j$, there exists a degree 0 morphism $x_{i,j} : i \to j$. $\hom_{\LL[\Delta^k]}(i,j)$ in degree 0 is a free $\kk$-module generated by words $x_{i_{n-1},i_n}\ldots x_{i_0,i_1}$ with $i_0=i,i_n=j$, $i_a < i_{a+1}$ and $n \geq 1$. For every $i \in \ob \Delta^k$ we think of $\hom(i,i) = \kk = \kk\cdot\emptyset_i$ as generated by an empty word $\emptyset_i$ representing the identity of $i$, and composition is given by concatenation of words. There are degree -1 elements that exhibit any two words generating $\hom_{\LL[\Delta^k]}(i,j)$ as cohomologous (and higher degree elements exhibiting higher coboundaries), arranged so that $\hom_{\LL[\Delta^k]}(i,j)$ is quasi-isomorphic to the chain complex $\kk$ concentrated in degree 0.

Consider the dg-category $\kk\tensor[k]$, where $\hom_{\kk \tensor [k]}(i,j)$ is $\kk$ if $i \leq j$ and zero otherwise, with the obvious composition $\kk \tensor \kk \xrightarrow{\cong} \kk$. Then
$\LL[\Delta^k]$ admits a natural (in the $\Delta^k$ variable) functor to $\kk \tensor [k]$ given by the identity on objects, sending all words to the generator of $\kk = \hom_{\kk \tensor [k]}(i_0,i_n)$, and sending all negative-degree elements to 0. This functor is a quasi-equivalence. (Indeed, one should think of $\LL[\Delta^k]$ as a resolution of $\kk \tensor [k]$.)

Now we seek to understand the righthand composite in~\eqref{eqn. reduction of left adjoint to simplices}. The $\infty$-category $\Delta^k$ is characterized as an $\infty$-category whose set of objects is in bijection with the poset $[k]$, and where $\hom_{\Delta^k}(i,j) \simeq pt$ is a contractible space when $i \leq j$ (and is empty otherwise), with the obvious composition law. To understand \eqref{eqn. free dg category}, we may thus apply~\eqref{eqn. presentable unit} on each morphism space -- and~\eqref{eqn. presentable unit} is modeled at the level of 1-categories as the normalized chain complex functor $C_*$ from the category of Kan complexes to $\chain$, then localizing along weak equivalences. Noting that $C_*(pt) \cong \kk$, we see that the righthand composite in~\eqref{eqn. reduction of left adjoint to simplices} is equivalent to the assignment sending $\Delta^k$ to $\kk \tensor [k]$. 
\end{proof}

\begin{recollection}
\label{recollection. Aoo nerve}
Recall also the notion of $A_\infty$ nerve, as constructed in~\cite{tanaka-thesis} and by Faonte in~\cite{faonte-nerve}. This is a functor from the category of strictly unital $A_\infty$-categories and strictly unital functors to the category of simplicial sets, with image landing in $\infty$-categories (i.e., quasi-categories). It is straightforward from the definitions (by choosing an appropriate sign convention) that the diagram
	\eqnn
	\xymatrix{
	\sset & \ar[l]_{N}  \dgcatt \ar[d]^i \\
	& \Ainftystr \ar[ul]^{N}
	}
	\eqnnd
commutes. (That is, if $A$ is a dg-category, its dg-nerve is equal to its $A_\infty$-nerve.) 
\end{recollection}

Because $i$ induces an equivalence of $\infty$-categories $\dgcat \to \Ainftycat$, Proposition~\ref{prop. dg nerve is right adjoint} thus implies:

\begin{prop}
\label{prop. A infty nerve is right adjoint}
The right adjoint to the composition
	\eqnn
	\xymatrix{
	\inftyCat \ar[r]^-{\eqref{eqn. free dg category}}
		& \dgcat \ar[r]^-{i}_-{\simeq}
		& \Ainftycat
	}
	\eqnnd
is, for strictly unital $A_\infty$-categories, modeled by the $A_\infty$ nerve.
\end{prop}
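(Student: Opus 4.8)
The plan is to deduce the proposition formally, treating it as the corollary the excerpt advertises it to be: combine Proposition~\ref{prop. dg nerve is right adjoint} (the dg nerve is the right adjoint to~\eqref{eqn. free dg category}), the equivalence $i : \dgcat \to \Ainftycat$ (which itself comes from \cite{pascaleff, cos-over-rings} and Theorem~\ref{theorem. main theorem}), and the point-set identity of Recollection~\ref{recollection. Aoo nerve}, namely that the $A_\infty$ nerve restricted along $\dgcatt \hookrightarrow \Ainftystr$ \emph{is} the dg nerve. The only input with real content is that the $A_\infty$ nerve $N : \Ainftystr \to \sset$ lands in quasi-categories and sends quasi-equivalences to categorical equivalences (\cite{faonte-nerve, tanaka-thesis}); this is exactly why the statement is phrased only for strictly unital $A_\infty$-categories, since that is the domain of $N$.

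Concretely, I would first use that invariance to descend $N$ along the localization $\Ainftystr \to \Ainftystr[\eqs^{-1}]$ to a functor $\overline N : \Ainftystr[\eqs^{-1}] \to \inftyCat$, which by construction carries the class of a strictly unital $A_\infty$-category $A$ to the quasi-category $N(A)$. Let $\iota : \dgcat \to \Ainftystr[\eqs^{-1}]$ denote the equivalence induced by the inclusion $\dgcatt \hookrightarrow \Ainftystr$ (the one appearing in the middle column of~\eqref{eqn. five models}). Localizing the commuting triangle of Recollection~\ref{recollection. Aoo nerve} gives $R \simeq \overline N \circ \iota$, where $R : \dgcat \to \inftyCat$ is the functor induced by the dg nerve; here one uses that the dg nerve preserves all quasi-equivalences of dg-categories, which follows from Recollection~\ref{recollection. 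Aoo nerve} together with the invariance of $N$. By Proposition~\ref{prop. dg nerve is right adjoint}, $R$ is the right adjoint to~\eqref{eqn. free dg category}. Since the right adjoint of a composite of functors is the composite of the right adjoints, and the right adjoint of an equivalence is its inverse, it follows that $\overline N \simeq R \circ \iota^{-1}$ is the right adjoint to $\iota \circ \eqref{eqn. free dg category}$; transporting along the equivalence $\Ainftystr[\eqs^{-1}] \simeq \Ainftycat$ of Theorem~\ref{theorem. main theorem} identifies it with the right adjoint to $\inftyCat \xrightarrow{\eqref{eqn. free dg category}} \dgcat \xrightarrow{i} \Ainftycat$. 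As $\overline N$ is, by construction, computed on strictly unital $A_\infty$-categories by the $A_\infty$ nerve, this is the claim.

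I do not anticipate a substantive obstacle; the only care required is in checking that the various ``functors induced by the dg nerve'' agree. The right adjoint supplied by Proposition~\ref{prop. dg nerve is right adjoint} is a priori the $\infty$-categorical derived functor of the dg nerve (obtained from a Quillen adjunction), but since the dg nerve in fact preserves all quasi-equivalences this derived functor is simply the localization of the dg nerve, so it coincides with the $R$ that appears when one localizes Recollection~\ref{recollection. Aoo nerve}. Should one wish to sidestep invariance and argue pointwise instead, one can, for each strictly unital $A_\infty$-category $A$, choose a quasi-equivalence onto a dg-category -- for instance the Yoneda embedding $A \to \cY(A)$, a quasi-equivalence by (the proof of) Remark~\ref{remark. hp replacements} -- and evaluate the right adjoint on $A$ as $R(\cY(A)) = N(\cY(A)) \simeq N(A)$, the first equality being Recollection~\ref{recollection. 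Aoo nerve} and the last equivalence the single use of invariance that is genuinely needed.
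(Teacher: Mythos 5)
Your proposal is correct and follows essentially the same route as the paper, which derives this proposition in a single sentence from Proposition~\ref{prop. dg nerve is right adjoint}, the equivalence $i:\dgcat\to\Ainftycat$, and the compatibility of the $A_\infty$ nerve with the dg nerve from Recollection~\ref{recollection. Aoo nerve}. You have merely made explicit the details the paper leaves implicit (descending $N$ to the localization, passing to right adjoints of the composite, and the one genuine use of quasi-equivalence invariance of the $A_\infty$ nerve), all of which are sound.
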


\begin{remark}[Nerves for unital $A_\infty$-categories]
When an $A_\infty$-category $A$ is (not necessarily strictly) unital, the nerve $N(A)$ produces a semisimplicial set -- this is because a given object $X \in \ob A$ has no distinguished single morphism deserving the title of $\id_X$. Regardless, $N(A)$ satisfies two special properties. (i) The weak Kan condition for semisimplicial sets, and (ii) every object admits an idempotent self-equivalence.  It is a result of Steimle that any such semisimplicial sets admits an enhancement to a simplicial set, unique up to homotopy; and~\cite{tanaka-units} explains how to make functors out of the resulting simplicial sets. Section~3.4 of~\cite{last-1} gives an account of the latter fact as well.

Because the results of this work do not require a point-set model for the putative nerve functor $\Ainfty \to \sset$ to the category of simplicial sets, we do not pursue a detailed explication for the unital case of the nerve.  (In fact, the $A_\infty$-nerve seems more natural to model as a functor to semisimplicial sets; the desired map to $\inftyCat$ is most easily modeled by localizing semisimplicial sets by a well-chosen class of morphisms to obtain a functor into $\inftyCat$).
\end{remark}

\begin{remark}
Let $A$ be a strictly unital $A_\infty$-category and $NA$ its $A_\infty$ nerve. By definition, the objects of $NA$ are the objects of $A$. We moreover have a natural isomorphism
	\eqnn
	H^{-i}\hom_A(x,y)
	\cong
	\pi_i \hom_{NA}(x,y)\qquad \forall i \geq 0.
	\eqnnd
One can verify this by hand straight from the definitions and from the usual Dold-Kan computations (see~\cite{tanaka-thesis}). Or, one can utilize Proposition~\ref{prop. A infty nerve is right adjoint} and bootstrap the result from the case of dg-categories, which is proven in Remark 1.3.1.12 of~\cite{higher-algebra}.
\end{remark}

\subsection{Generalities on homotopy categories}
\label{section. different models for homotopy cateogires}
Recall that if $\cC$ is a model category, one has a well-defined notion of homotopy category. It has (at least) four different, naturally equivalent definitions:
\enum
	\item\label{item. pi0 of localization} One takes the $\infty$-categorical localization $\cC[\eqs^{-1}]$, and takes the homotopy category of this localization. By construction, this definition produces a category with the same objects as $\cC$, and has morphisms given by $\pi_0 \hom_{\cC[\eqs^{-1}]}(X,Y)$. (This definition only depends on the weak equivalences of the model categorical structure.) 
	\item\label{item. naive localization} One takes the $1$-categorical localization of $\cC$ along $\eqs$.\footnote{See for example Gabriel-Zisman~\cite{gabriel-zisman}, 1.1 of Chapter 1.} By construction, this definition produces a category with the same objects as $\cC$, and has morphisms given by diagrams of finite-length zig-zags, modulo natural relations. While a priori such a definition may result in a category with non-small morphism classes (see~\cite{krause-localization-theory}, Example 4.15) model-categorical arguments allow us to conclude a posteriori that the morphism sets are (small) sets. (This definition only  depends on the weak equivalences of the model categorical structure.) 
	\item\label{item. subcategory localization} One takes the full subcategory of fibrant and cofibrant objects of $\cC$, then defines the set of morphisms from $X$ to $Y$ to be the quotient $\hom_{\cC}(X,Y)/{\sim}$ by the model-categorical relation of homotopy.  (This definition uses more than just the weak equivalences in the model category structure.) 
	\item\label{item. replacement localization} One defines the homotopy category to have the same collection of objects as $\cC$, then chooses for every object $X$ a fibrant-cofibrant replacement $RQX \leftarrow QX \to X$ (this need not be done functorially at the level of $\cC$) and defines morphisms as $\hom_{\cC}(RQX,RQY)/{\sim}$. It is an exercise that this results in a well-defined composition rule.  (This definition uses more than just the weak equivalences in the model category structure.) 
\enumd
	
\begin{remark}
The equivalence between \eqref{item. pi0 of localization} and \eqref{item. naive localization} is Proposition~3.1 of~\cite{dwyer-kan-calculating}, combined with the fact that hammock localization computes $\infty$-categorical localizations.\footnote{See Proposition~1.2.1 of~\cite{hinich-dwyer-kan-revisited}, where $\infty$-categorical localization is modeled using marked simplicial sets.}
This equivalence does not depend on $\cC$ being a model category -- this is relevant for us because $\Ainfty$ does not admit a model category structure.

As an aside, the equivalence holds even when the localizations may not be locally small. (One simply passes to a larger Grothendieck universe as needed to articulate categories with larger morphism sets.)

The equivalence between
\eqref{item. subcategory localization} 
and
\eqref{item. replacement localization} follows by noting that the former is (by definition) a full subcategory of the latter, and its inclusion is essentially surjective because weak equivalences are isomorphisms in the latter (Proposition~5.8 of~\cite{dwyer-spalinski}).

The equivalence between
\eqref{item. naive localization}
and
\eqref{item. replacement localization}
is Theorem~6.2 of~\cite{dwyer-spalinski}.
\end{remark}
%
%
%
%
%
%

\subsection{Whitehead style results}
\label{section. whitehead style}

\begin{prop}[Whitehead's Theorem for homotopically projective $A_\infty$-categories]
\label{prop. whitehead for a infty}
Let $A$ and $B$ be $A_\infty$-categories whose morphism complexes are homotopically projective chain complexes. If $f: A \to B$ is a quasi-isomorphism, then $f$ admits an inverse $A_\infty$-functor up to natural equivalence.
\end{prop}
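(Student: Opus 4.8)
The plan is to isolate the genuinely nonformal content --- that a quasi-equivalence between $A_\infty$-categories with homotopically projective morphism complexes is a chain homotopy equivalence on each morphism complex (Proposition~\ref{prop. whitehead for projectives}) --- and feed it into the classical-over-a-field construction of a quasi-inverse $A_\infty$-functor, observing that nothing in that construction needs $\kk$ to be a field once the hom maps are already homotopy equivalences. (I read ``quasi-isomorphism'' in the statement as ``quasi-equivalence'': essentially surjective on $H^0$, and a quasi-isomorphism on every morphism complex.)

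First I would reduce to producing a single $A_\infty$-functor $g\colon B\to A$ together with a natural equivalence $f\circ g\simeq \id_B$. Work in the ordinary category $\ho\Ainfty$ whose objects are $A_\infty$-categories and whose morphisms are natural-equivalence classes of $A_\infty$-functors (this is a genuine category; see Lyubashenko--Ovsienko and Bespalov--Lyubashenko--Manzyuk). Given such a $g$, it is automatically a quasi-equivalence: $H^0(g)$ is a quasi-inverse to the equivalence $H^0(f)$, hence essentially surjective, and on morphism complexes $H^*(f_1)\circ H^*(g_1)$ equals conjugation by the (cohomologically invertible) components of $f g\simeq\id_B$, while $H^*(f_1)$ is an isomorphism, so $H^*(g_1)$ is an isomorphism; in fact $g_1$ is again a chain homotopy equivalence, being up to homotopy a composite of a homotopy inverse of $f_1$ with conjugation by homotopy-invertible morphisms of $B$. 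Since $g$ also has homotopically projective morphism complexes (its source and target being $B$ and $A$), applying the construction to $g$ yields $h\colon A\to B$ with $g\circ h\simeq\id_A$. In $\ho\Ainfty$ we then have $[f][g]=\id_B$ and $[g][h]=\id_A$, whence $[h]=[f][g][h]=[f]$ and therefore $[g][f]=[g][h]=\id_A$; so $[f]$ is invertible with inverse $[g]$, i.e.\ $f$ admits an inverse $A_\infty$-functor up to natural equivalence.

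It remains to construct $g$ and $f g\simeq \id_B$. By Proposition~\ref{prop. whitehead for projectives}, each quasi-isomorphism $f_1\colon \hom_A(x,y)\to\hom_B(fx,fy)$ is a chain homotopy equivalence; I would fix homotopy inverses $\bar f_{x,y}$ and chosen chain homotopies witnessing $\bar f_{x,y}f_1\simeq\id$ and $f_1\bar f_{x,y}\simeq\id$. Using essential surjectivity of $H^0(f)$, choose for every $b\in\ob B$ an object $g(b)\in\ob A$ and closed degree-zero morphisms of $B$ exhibiting $f g(b)$ and $b$ as mutually inverse in $H^0(B)$. Then build, simultaneously and by induction on word length in the bar construction, the higher components $g^n$ ($n\ge1$) of $g$ together with the components $T^n$ of a natural transformation $T\colon fg\to\id_B$ whose leading term is the chosen isomorphism; at each stage the failure of the truncated $A_\infty$-functor relations (resp.\ natural-transformation relations) is a closed element of a morphism complex of $A$ (resp.\ $B$), and the chosen chain homotopies provide explicit primitives for it. Over a field this is the standard homological-perturbation/obstruction construction of a quasi-inverse (Lefèvre-Hasegawa, Seidel); since the $f_1$ are already chain homotopy equivalences, the same recursion goes through verbatim over $\kk$. (Alternatively, one may simply invoke the ring-level statement that an $A_\infty$-functor which is essentially surjective on $H^0$ and a chain homotopy equivalence on morphism complexes is an equivalence, as recorded by Bespalov--Lyubashenko--Manzyuk, which removes even the need for the $\ho\Ainfty$ bookkeeping above.) For organizing the induction I would pass to the cocategory picture, where an $A_\infty$-functor $B\to A$ is a morphism of tensor cocategories and a natural transformation is a coderivation homotopy, so that producing $(g,T)$ becomes a filtered lifting problem solved degree by degree against the homotopy equivalences on hom complexes.

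The step I expect to be the main obstacle is precisely this inductive construction with its sign bookkeeping: the recursion has to be set up so that the partially defined $g$ and $T$ always satisfy the truncated relations and so that each new obstruction is manifestly a coboundary killed by the chosen homotopies. A tempting shortcut would be to route through the Yoneda embedding $A\to\cY(A)$ of the excerpt and reduce to a Whitehead statement for dg-categories, but $\cY(A)$ need not have homotopically projective morphism complexes --- being computed as a complex of module maps, its hom complexes tend to be homotopically injective rather than projective --- so one cannot invert the auxiliary dg-functors the reduction would require, and the direct $A_\infty$-theoretic argument seems unavoidable. Finally I would record the two routine facts used above: that $\ho\Ainfty$ is a well-defined category, and that quasi-equivalences of $A_\infty$-categories satisfy two-out-of-three.
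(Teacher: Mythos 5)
Your proposal is correct and follows essentially the same route as the paper: reduce via Proposition~\ref{prop. whitehead for projectives} to the statement that $f$ is a chain homotopy equivalence on each morphism complex, pair this with the object-level data supplied by essential surjectivity, and then appeal to the standard $A_\infty$-categorical inversion theorem (the paper cites Theorem~8.8 of~\cite{lyubashenko-category-of}, which is exactly the ``ring-level statement'' you mention invoking as your alternative to the hand-built induction). The only difference is that the paper skips the inductive construction and the two-sided-inverse bookkeeping entirely by citing that theorem, which already delivers a two-sided inverse up to natural equivalence.
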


\begin{proof}

For all objects $a,a' \in \ob A$, $f$ induces a quasi-isomorphism $f_{a,a'}: \hom_A(a,a') \to \hom_B(f(a),f(a'))$. By Proposition~\ref{prop. whitehead for projectives}, we conclude:
	\eqnn
	\text{each $f_{a,a'}$ admits a homotopy-inverse chain map.}
	\eqnnd
Because $f$ is essentially surjective, by the axiom of choice there exists a function $h: \ob B \to \ob A$ so that there exists an isomorphism $fh(b) \cong b$ in $H^0(B)$. Let $p: fhb \to b$ be a representative for this isomorphism and $r: b \to fhb$ be a representatives for the inverse; we can thus choose degree $-1$ elements $w \in \hom(b,b)$ and $v \in \hom(fhb,fhb)$ whose differential realizes the equalities $[m^2(p,r)] = [e_b]$ and $[m^2(r,p)] = [e_{fh(b)}]$ (here, $e_b$ is a homotopy unit for $b$). Let us abbreviate this simply as
	\eqnn
	\text{$f$ admits the data of $h$ and of $\{(p,r,w,v)\}_{b \in \ob B}$.}
	\eqnnd
The above two inline conditions are precisely the hypotheses laid out in Theorem~8.8 of~\cite{lyubashenko-category-of} for $f$ to admit an inverse functor $g: B \to A$ for which $fg$ is homotopic to (i.e., naturally equivalent to) $\id_B$ and $gf$ is homotopic to $\id_A$.
\end{proof}

\begin{prop}
\label{prop. pullback along equivalence is equivalence}
Suppose $f: A \to A'$ is a unital functor of unital $A_\infty$-categories admitting an inverse up to natural equivalence. Then for all unital $A_\infty$-categories $B$, the pullback
	\eqnn
	 \fun_{A_\infty}(A',B) \to \fun_{A_\infty}(A,B)
	\eqnnd
(between $A_\infty$-categories of unital functors) is also a unital functor of unital $A_\infty$-categories admitting an inverse up to natural equivalence.
\end{prop}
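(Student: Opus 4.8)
The plan is to realize the pullback $f^{*} := \fun_{A_\infty}(f,B)$ as one half of an equivalence, with inverse $g^{*} := \fun_{A_\infty}(g,B)$, where $g \colon A' \to A$ is a chosen inverse of $f$ up to natural equivalence. This reduces the statement to two facts about the assignment $C \mapsto \fun_{A_\infty}(C,B)$: that it is strictly functorial in the $C$-variable (so pullbacks compose), and that it carries a natural equivalence of functors $C \to C'$ to a natural equivalence of the induced pullback functors $\fun_{A_\infty}(C',B) \to \fun_{A_\infty}(C,B)$.

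First I would record functoriality and unitality. Composition of $A_\infty$-functors is strictly associative and unital, and precomposition with a unital $f \colon A \to A'$ defines a functor $f^{*} \colon \fun_{A_\infty}(A',B) \to \fun_{A_\infty}(A,B)$ sending a unital functor $F$ to $F \circ f$ (again unital, as a composite of unital functors) and a natural transformation $\alpha \colon F_0 \Rightarrow F_1$ to its precomposite $\alpha \circ f$; one has $(\id_A)^{*} = \id$ and $(f' \circ f)^{*} = f^{*} \circ (f')^{*}$ on the nose. That $f^{*}$ is itself a \emph{unital} functor of $A_\infty$-categories follows from the explicit description of the (homotopy) units of $\fun_{A_\infty}(-,B)$ coming from the closed structure on unital $A_\infty$-categories (\cite{bespalov-lyubashenko-manzyuk, lyubashenko-manzyuk-bimodules}): the distinguished unit of a functor $F$ has components built from the units of $B$ through $F$, and $f^{*}$ merely re-indexes these along $f$ on objects, hence lands on the distinguished unit of $F \circ f$.

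The key step is that whiskering preserves equivalences. Given a natural transformation $\eta \colon f_0 \Rightarrow f_1$ of unital functors $A \to A'$ and any unital $F \colon A' \to B$, there is a natural transformation $F * \eta \colon F f_0 \Rightarrow F f_1$, and these assemble (using the $A_\infty$-composition $\fun_{A_\infty}(A',B) \tensor \fun_{A_\infty}(A,A') \to \fun_{A_\infty}(A,B)$ that is part of the closed structure) into a closed natural transformation $\eta^{*} \colon f_0^{*} \Rightarrow f_1^{*}$ of functors $\fun_{A_\infty}(A',B) \to \fun_{A_\infty}(A,B)$. The crucial point is that if $\eta$ is a natural \emph{equivalence} --- equivalently, each component $\eta_X \in \hom_{A'}(f_0 X, f_1 X)$ is invertible in $H^0(A')$ --- then $\eta^{*}$ is again a natural equivalence: its value at $F$ has, up to the usual identifications, $X$-component the class $F_{*}[\eta_X] \in H^0\hom_B(F f_0 X, F f_1 X)$, and since every $A_\infty$-functor induces a functor of $H^0$-categories, $F_{*}[\eta_X]$ is invertible. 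Thus $\eta^{*}$ is an invertible natural transformation from $f_0^{*}$ to $f_1^{*}$.

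Finally I would assemble the pieces. Fix $g \colon A' \to A$ together with natural equivalences $\eta \colon \id_A \Rightarrow g \circ f$ and $\theta \colon \id_{A'} \Rightarrow f \circ g$, as provided by the hypothesis. By functoriality $(g \circ f)^{*} = f^{*} \circ g^{*}$ and $(\id_A)^{*} = \id_{\fun_{A_\infty}(A,B)}$, and symmetrically for $A'$; applying the whiskering construction to $\eta$ and to $\theta$ produces natural equivalences $\id_{\fun_{A_\infty}(A,B)} \Rightarrow f^{*} \circ g^{*}$ and $\id_{\fun_{A_\infty}(A',B)} \Rightarrow g^{*} \circ f^{*}$. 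Since $g^{*}$ is a unital functor of unital $A_\infty$-categories by the argument used for $f^{*}$, this exhibits $f^{*} = \fun_{A_\infty}(f,B)$ as a unital functor admitting $g^{*}$ as an inverse up to natural equivalence, which is the assertion. The main obstacle is the content of the third paragraph: making $F * \eta$ and its assembly into $\eta^{*}$ precise inside a fixed point-set model of $\fun_{A_\infty}$ (relying on the closed structure of \cite{bespalov-lyubashenko-manzyuk}), and --- the genuinely substantive point --- verifying that whiskering a natural equivalence with an $A_\infty$-functor remains a natural equivalence, which rests on the functoriality of $A_\infty$-functors at the level of $H^0$-categories.
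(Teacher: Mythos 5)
Your proof is correct and follows essentially the same route as the paper: both establish the equivalence by whiskering the natural equivalences $\id_A \Rightarrow g\circ f$ and $\id_{A'} \Rightarrow f\circ g$ through the closed structure on $A_\infty$-categories (the paper delegates exactly this to equation (6.1.1) and Proposition 6.2 of \cite{lyubashenko-category-of}). The only notable difference is how unitality of $f^*$ and $g^*$ is obtained: you check it directly from the description of units in the functor category, whereas the paper first runs the argument in the non-unital functor categories and then deduces unitality a posteriori from Theorem 8.8 of \cite{lyubashenko-category-of} (a functor admitting an inverse up to natural equivalence is automatically unital) before restricting to the unital subcategories -- a slightly cleaner route that also spares you the component-level verification that $f^*$ carries the unit of $F$ to a unit of $F\circ f$.
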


\begin{proof}
This follows from Section~6 of~\cite{lyubashenko-category-of}. Indeed, a repeated application of equation (6.1.1) of ibid. -- by doing work similar to the proof of Proposition 6.2 of ibid. -- shows that functors $f:A \to A'$, $g: A' \to A$ and natural equivalences $fg \sim \id_{A'}, gf \sim \id_A$ produce functors $f^*, g^*$ and natural equivalences $f^*g^* \sim \id_{\fun_{A_\infty}^{non-unital}(A',B)}$ and $g^* f^* \sim \id_{\fun_{A_\infty}^{non-unital}(A,B)}$. Note that, a posteriori, we may conclude that $g^*$ and $f^*$ are unital functors by Theorem~8.8 of ibid. Finally, because $f$ is assumed unital (and hence $g$ is unital), both $f^*$ and $g^*$ preserve the full subcategories $\fun_{A^\infty} \subset \fun_{A^\infty}^{non-unital}$ of unital $A_\infty$-functors. The restrictions of $f^*$ and $g^*$ to these full subcategories yields the result.

\end{proof}

\begin{prop}
\label{prop. homotopically projective abstract equivalences are equivalences}
Fix $A$ and $A'$, two unital $A_\infty$-categories with homotopically projective morphism complexes. Suppose one abstractly knows that $A$ and $A'$ are isomorphic in the homotopy category $\ho \Ainftycat$. Then there exists a unital functor of $A_\infty$-categories $A \to A'$ admitting an inverse up to natural equivalence.
\end{prop}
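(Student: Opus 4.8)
The plan is to move the question across the equivalence of $\infty$-categories $i\colon\dgcat\xrightarrow{\sim}\Ainftycat$ (Theorem~\ref{theorem. main theorem} together with~\cite{pascaleff,cos-over-rings}) into the world of dg-categories, where the Tabuada model structure lets one realize an abstract isomorphism in the homotopy category by an honest dg-functor, and then to carry the resulting functor back to $A$ and $A'$ using Whitehead's theorem for homotopically projective $A_\infty$-categories (Proposition~\ref{prop. whitehead for a infty}). First I would set up the Yoneda reduction: for any unital $A_\infty$-category $C$ the Yoneda embedding is a fully faithful $A_\infty$-functor $\cY\colon C\to\fun_{A_\infty}(C^{\op},\chain_\kk)$; let $\cY(C)$ denote the full subcategory on the representable modules. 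Since $\chain_\kk$ is a dg-category, $\fun_{A_\infty}(C^{\op},\chain_\kk)$ is a dg-category, so $\cY(C)$ is a dg-category; and by Corollary~A.9 of~\cite{lyubashenko-manzyuk-bimodules} the map $\cY\colon C\to\cY(C)$ is an equivalence of $A_\infty$-categories, i.e.\ a quasi-equivalence admitting an $A_\infty$-functor $\rho_C\colon\cY(C)\to C$ inverse to it up to natural equivalence. Applying this to $A$ and $A'$ gives $\cY(A)\simeq A$ and $\cY(A')\simeq A'$ in $\Ainftycat$. As $i$ is induced by the inclusion $\dgcatt\into\Ainfty$, the objects $\cY(A),\cY(A')\in\dgcat$ are sent by $i$ to these, so the hypothesis $A\cong A'$ in $\ho\Ainftycat$, together with the full faithfulness of $\ho(i)$, gives an isomorphism $\cY(A)\cong\cY(A')$ in $\ho\dgcat$.

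Next I would straighten this isomorphism. By Section~\ref{section. different models for homotopy cateogires}, $\ho\dgcat$ is the classical homotopy category of the Tabuada model structure on the $1$-category $\dgcatt$~\cite{tabuada-model-structures-on-dg-cats}, every object of which is fibrant. Choose cofibrant replacements $q_A\colon Q\cY(A)\to\cY(A)$ and $q_{A'}\colon Q\cY(A')\to\cY(A')$ (trivial fibrations, hence quasi-equivalences); then $Q\cY(A)$ and $Q\cY(A')$ are cofibrant-fibrant and isomorphic in $\ho\dgcat$. Using the presentation of $\ho\dgcat$ by cofibrant-fibrant dg-categories with homotopy classes of dg-functors as morphisms (Section~\ref{section. different models for homotopy cateogires}), this isomorphism is represented by a dg-functor $g\colon Q\cY(A)\to Q\cY(A')$; since $[g]$ is invertible, $g$ is a homotopy equivalence and therefore a quasi-equivalence (Proposition~5.8 of~\cite{dwyer-spalinski}). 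I would then transport back: cofibrant dg-categories have homotopically projective morphism complexes (Notation~\ref{notation. dgcat versions}), so $Q\cY(A)$ and $Q\cY(A')$ are homotopically projective $A_\infty$-categories. The composites $\rho_A\circ q_A\colon Q\cY(A)\to A$ and $\rho_{A'}\circ q_{A'}\colon Q\cY(A')\to A'$ are quasi-equivalences between homotopically projective $A_\infty$-categories, so by Proposition~\ref{prop. whitehead for a infty} each admits an inverse up to natural equivalence; fix an inverse $\sigma_A\colon A\to Q\cY(A)$ of the first, and set
\[F:=(\rho_{A'}\circ q_{A'})\circ g\circ\sigma_A\colon A\longrightarrow A'.\]
Then $F$ is a composite of quasi-equivalences each admitting an inverse up to natural equivalence, so $F$ itself admits an inverse up to natural equivalence; and $F$ is unital because a quasi-equivalence induces an equivalence of cohomology categories and hence carries units to units.

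The main obstacle — and the reason the Yoneda detour cannot be avoided — is that an isomorphism in $\ho\Ainftycat$ is a priori only a zig-zag of quasi-equivalences, and over an arbitrary base ring a quasi-equivalence need not admit an inverse up to natural equivalence (as in the $\ZZ/2\ZZ$ versus $\ZZ\xrightarrow{2}\ZZ$ example). What rescues the situation is that dg-categories carry the Tabuada model structure, so an abstract isomorphism between them can be straightened by cofibrant replacement; and the homotopical projectivity of $A$ and $A'$, combined with the fact that cofibrant dg-categories are homotopically projective, is exactly what makes Proposition~\ref{prop. whitehead for a infty} applicable for the return trip. The remaining inputs — that $\cY(C)$ is a dg-category and that Yoneda is an equivalence of $A_\infty$-categories over any ring \cite{lyubashenko-manzyuk-bimodules}, and that a quasi-equivalence between cofibrant-fibrant dg-categories has a homotopy inverse \cite{dwyer-spalinski} — are standard, so no further delicate point should arise.
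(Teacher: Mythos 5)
Your proof is correct and follows essentially the same route as the paper's: pass to dg-categories via the Yoneda embedding, use the Tabuada model structure and the cofibrant--fibrant presentation of $\ho\dgcat$ to straighten the abstract isomorphism into an honest dg-functor, and return to $A$ and $A'$ via Proposition~\ref{prop. whitehead for a infty}. The only cosmetic differences are that the paper re-applies the Whitehead argument to the final composite (rather than composing the individual natural-equivalence inverses) and cites Corollary~8.9 of~\cite{lyubashenko-category-of} for unitality, but both variants are sound.
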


\begin{proof}
By the Yoneda Lemma, any unital $A_\infty$-category $A$ admits an $A_\infty$-functor $A \to \cY(A)$, where $\cY(A)$ is characterized as the image of $A$ under the Yoneda embedding into $\fun_{A_\infty}(A^{\op},\kk\Mod)$, and this functor is invertible up to natural equivalence.\footnote{See Corollary~A.9 of~\cite{lyubashenko-manzyuk-bimodules} and the references there.} Because $\cY(A)$ is a dg-category, it admits a cofibrant resolution $\tilde{A} \to \cY(A)$ in the model category of dg-categories. Because the maps $\dgcatt \to \Ainfty^{\str} \to \Ainfty$ induce equivalences\footnote{Combine Theorem~B of~\cite{cos-over-rings} and our Theorem~\ref{theorem. main theorem}.} 
upon localizing along quasi-equivalences, the induced functor $i: \dgcat \to \Ainftycat$ is an equivalence of $\infty$-categories. It follows that $\tilde{A}$ and $\tilde{A'}$ are abstractly isomorphic in $\ho \dgcat$. On the other hand, both of these dg-categories are cofibrant (and every object of $\dgcatt$ is fibrant), so by the equivalence
of~\eqref{item. pi0 of localization} and~\eqref{item. subcategory localization} in Section~\ref{section. different models for homotopy cateogires}, 
we conclude that there are morphisms in
	\eqnn
	\hom_{\dgcatt}(\tilde{A},\tilde{A'})
	\qquad
	\text{and}
	\qquad
	\hom_{\dgcatt}(\tilde{A'},\tilde{A})
	\eqnnd
which descend to be mutually inverse isomorphisms in the homotopy category $\ho\dgcat$. Pushing forward these morphisms along $i$, we obtain the middle $\leftrightarrow$ in the following collection of quasi-equivalences of $A_\infty$-categories:
	\eqnn
	A \leftrightarrow \cY(A) \leftarrow \tilde{A} \leftrightarrow \tilde{A'} \to \cY(A') \leftrightarrow A'.
	\eqnnd
Here, $\leftrightarrow$ denotes two quasi-equivalences that are mutually inverse up to natural equivalence. On the other hand, because $\tilde{A}$ and $A$ both have homotopically projective morphism complexes, the composite quasi-equivalence $A \leftarrow \tilde{A}$ admits a homotopy inverse up to natural equivalence (Proposition~\ref{prop. whitehead for a infty}). By composing the obvious arrows, one obtains a quasi-equivalence from $A$ to $A'$. This admits an inverse up to natural equivalences because both $A$ and $A'$ have homotopically projective morphism complexes (Proposition~\ref{prop. whitehead for a infty} again). Noting that if a functor between unital $A_\infty$-categories admits an inverse up to natural equivalence, it is automatic that the functor is unital (Corollary~8.9 of~\cite{lyubashenko-category-of}), the claim is proven. 

\end{proof}

\subsection{Computations of internal homs}
\label{section. internal hom computations}
\begin{proof}[Proof of Theorem~\ref{theorem. internal homs in dgcat}]
We have seen that $\dgcat$ is closed symmetric monoidal (Section~\ref{section. internal homs exist}). It follows that the homotopy category $\ho\dgcat$ is closed symmetric monoidal, due to the natural bijections
	\begin{align}
	\hom_{\ho \dgcat}(C \tensor_{\dgcat} D, E)
	&=   \pi_0\hom_{\dgcat}(C \tensor_{\dgcat} D, E) \nonumber \\
	& \cong \pi_0\hom_{\dgcat}(D, \underline{\hom}(C,E)) \nonumber \\
	&=   \hom_{\ho \dgcat}(D, \underline{\hom}(C,E)) \nonumber 
	\end{align}
induced by~\eqref{eqn. internal hom}. 

On the other hand, we have already verified in Proposition~\ref{prop. monoidal structure is derived dg tensor} that -- at the level of homotopy categories -- the assignment $(C,D) \mapsto C \tensor_{\dgcat} D$ is naturally isomorphic to the assignment $(C,D) \mapsto Q(C) \tensor D$, where $\tensor$ is the usual tensor product on the 1-category of dg-categories. Thus, $ \underline{\hom}(C,E)$ is naturally isomorphic in $\ho \dgcat$ to any dg-category $H_{C,E}$ exhibiting the closedness of $\ho \dgcat$ under the monoidal structure $(A,B) \mapsto Q(A) \tensor B$. 

$\eqref{item. internal hom in dgcat} \simeq \eqref{item. internal hom in dgcat rqr cofib modules}$. 
One such $H_{C,E}$ is identified by To\"en (and improved upon) by Canonaco-Stellari: The dg-category of cofibrant (or, more generally, homotopically projective) quasi right representable modules over $A^{\op} \tensor^{\LL} B$. (See Theorem 6.1 of~\cite{toen-homotopy-theory-of-dg-cats} and Theorem~1.1 of~\cite{canonaco-stellari-internal-homs-via-extensions}.)

$\eqref{item. internal hom in dgcat} \simeq \eqref{item. internal hom in dgcat unital cofibrant Aoo functors}$.
Another $H_{C,E}$ is identified by Canonaco-Ornaghi-Stellari~\cite{cos-over-rings} as the dg-category of unital $A_\infty$-functors from a homotopically projective replacement $A'$ of $A$, to $B$ (Theorem C of~\cite{cos-over-rings}).

$\eqref{item. internal hom in dgcat unital cofibrant Aoo functors}
\simeq
\eqref{item. internal hom unital projective A'}
$. 
By Proposition~\ref{prop. homotopically projective abstract equivalences are equivalences}, there exists a functor of $A_\infty$-categories $A' \to A''$ admitting an inverse up to natural equivalence. By Proposition~\ref{prop. pullback along equivalence is equivalence}, this yields a functor admitting an inverse up to natural equivalence $\fun_{A_\infty}(A',B) \simeq \fun_{A_\infty}(A'',B)$. Because these functor $A_\infty$-categories are dg-categories, we may identify them as in the images of the equivalence $i: \dgcat \to \Ainftycat$. Because $i$ is an equivalence of $\infty$-categories, we conclude that these functor dg-categories are equivalent in $\dgcat$ as well.

$\eqref{item. internal hom in dgcat unital cofibrant Aoo functors}
\simeq
\eqref{item. internal hom in dgcat strictly unital split unit Aoo functors}
$. The natural functor of dg-categories from $\fun_{A_\infty}^{\str}(A',B)$ to $\fun_{A_\infty}(A',B)$ is an equivalence if $A'$ is a dg-category with homotopically projective morphism complexes and split units (Remark 5.3 of~\cite{cos-over-rings}).

$\eqref{item. internal hom unital projective A'}
\simeq
\eqref{item. internal hom unital projective split units A'}
$. It suffices to prove that when $A''$ is strictly unital, then the inclusion $\fun_{A_\infty^{\str}}(A'',B) \to \fun_{A_\infty}(A'',B)$ of strictly unital functors into unital functors is an equivalence of $A_\infty$-categories. The inclusion is fully faithful by definition, so it remains to prove that any unital $A_\infty$-functor is homotopic to a strictly unital one. This is Lemma~4.2 of~\cite{cos-over-rings}.

\end{proof}

\begin{proof}[Proof of Theorem~\ref{theorem. internal homs in Aoocat}]
$\eqref{item. internal hom in Aoocat} \simeq \eqref{item. Aoo internal hom unital projective A'}$.
Let $A' \to A$ be a quasi-equivalence from a unital $A_\infty$-category with homotopically projective morphism complexes. This induces an isomorphism  of internal hom objects
	\eqnn
	\underline{\hom}_{\Ainftycat}(A,B)
	\cong
	\underline{\hom}_{\Ainftycat}(A',B) \in \ho \Ainftycat.
	\eqnnd
Choose $A_\infty$-functors $A' \to \cY(A'), B \to \cY(B)$ to dg-categories, admitting an inverse up to natural equivalence. (For example, by choosing the image of the Yoneda embedding as in the proof of Proposition~\ref{prop. homotopically projective abstract equivalences are equivalences}.) We obtain another isomorphism of internal hom objects
	\eqnn
	\underline{\hom}_{\Ainftycat}(A',B)
	\cong
	\underline{\hom}_{\Ainftycat}(\cY(A'),\cY(B)) \in \ho \Ainftycat.
	\eqnnd
On the other hand, because we have employed the symmetric monoidal structure on $\Ainftycat$ induced by the equivalence $i: \dgcat \to \Ainftycat$, we have an isomorphism between the internal hom object in $\dgcat$ and the internal hom object in $\Ainftycat$:
	\eqnn
	\underline{\hom}_{\Ainftycat}(\cY(A'),\cY(B))
	\cong
	i(\underline{\hom}_{\dgcat}(\cY(A'),\cY(B))) \in \ho\Ainftycat.
	\eqnnd
Now one may choose a homotopically projective replacement of dg-categories $\tilde{A'} \to \cY(A')$ and invoke Theorem~\ref{theorem. internal homs in dgcat}\eqref{item. internal hom in dgcat unital cofibrant Aoo functors} to conclude that
	\eqnn
	i(\underline{\hom}_{\dgcat}(\cY(A'),\cY(B))) 
	\cong
	i(\fun_{\Ainfty}(\tilde{A'},\cY(B))) 
	= \fun_{\Ainfty}(\tilde{A'},\cY(B))) \in \ho \Ainftycat.
	\eqnnd
To finish the proof, consider the homotopy inverse map $\cY(A') \to A'$ to the Yoneda equivalence, so that the composition $\tilde{A'} \to \cY(A') \to A'$ is a quasi-equivalence. Because both $\tilde{A'}$ and $A'$ have homotopically projective mapping complexes, the quasi-equivalence $\tilde{A'} \to A'$ admits an inverse up to natural equivalence (Proposition~\ref{prop. whitehead for a infty}.) Thus the functor categories out of $\tilde{A'}$ and out of $A'$ are equivalent as $A_\infty$-categories (Proposition~\ref{prop. pullback along equivalence is equivalence}). So we conclude
	\eqnn
	\fun_{\Ainfty}(\tilde{A'},\cY(B)))
	\cong 
	\fun_{\Ainfty}(A',\cY(B)))
	\cong 
	\fun_{\Ainfty}(A',B)
	\in \ho \Ainftycat
	\eqnnd
where the last isomorphism follows because $\cY(B) \to B$ is a functor invertible up to natural equivalence. We are finished by tracing through the above isomorphisms.

$\eqref{item. internal hom in Aoocat} 
\simeq
\eqref{item. Aoo internal hom unital projective split units A'} $.
The proof is identical to that of the case
$\eqref{item. internal hom unital projective A'}
\simeq
\eqref{item. internal hom unital projective split units A'}$
in Theorem~\ref{theorem. internal homs in dgcat}.

\end{proof}

\begin{proof}[Proof of Theorem~\ref{theorem. maps in dgcat} and Theorem~\ref{theorem. mapping spaces in Aoocat}.]
By setting $D$ to equal the unit dg-category $\kk \tensor [0]$, we have natural homotopy equivalences of Kan complexes
	\begin{align}
	\hom_{\dgcat}(C, E)
		& \simeq \hom_{\dgcat}(C \tensor_{\dgcat} D, E) \nonumber \\
		& \simeq \hom_{\dgcat}(D, \underline{\hom}(C,E)) \nonumber \\
		& \simeq \hom_{\inftyCat}(\Delta^0, N\underline{\hom}(C,E))) \nonumber \\
		& \simeq N(\underline{\hom}(C,E))^{\sim} \nonumber .
	\end{align}
Because the dg-nerve $N$ is a right Quillen functor, it respects quasi-equivalences among fibrant objects -- hence, among all objects (all objects are fibrant in the Tabuada model structure; see Proposition 2.3(1) of~\cite{toen-homotopy-theory-of-dg-cats}). So the above homotopy type is determined by the quasi-equivalence class of $\underline{\hom}(C,E)$. This proves Theorem~\ref{theorem. maps in dgcat}.

For the $A_\infty$-case (Theorem~\ref{theorem. mapping spaces in Aoocat}), we can write a similar proof by utilizing Proposition~\ref{prop. A infty nerve is right adjoint}.
\end{proof}

\subsection{Proof of Corollary~\ref{cor. cohomology and homotopy groups}}
\begin{proof}
By Theorem~\ref{theorem. mapping spaces in Aoocat} we have a homotopy equivalence
	\eqnn
	\hom_{\Ainftycat}(A',B)
	\simeq
	N(\fun_{A_\infty}(A',B))^\sim
	\eqnnd
and in particular an induced bijection on $\pi_0$ of both sides. By definition of the underlying $\infty$-groupoid $X^\sim$ of an $\infty$-category $X$, $\pi_0(X^\sim)$ is computed by relating any two vertices of $X$ when they are related by an equivalence in $X$. When $X$ is the nerve of $\fun_{A_\infty}(A',B)$, two functors $f,g: A' \to B$ are related if and only if there exists a natural equivalence from $f$ to $g$ -- i.e., if and only if $f$ and $g$ are isomorphic in the cohomology category $H^0(\fun_{A_\infty}(A',B))$. This shows the first claim.

When $f$ and $g$ are isomorphic in $H^0\fun_{A_\infty}(A',B)$, one has a homotopy equivalence of chain complexes
	\eqnn
	\hom_{\fun_{A_\infty}(A',B)}(f,g)
	\simeq
	\hom_{\fun_{A_\infty}(A',B)}(f,f).
	\eqnnd
So the rest of the claims are reduced to the case $f = g$.

We recall some generalities on the $A_\infty$-nerve.
First, if $C$ is an $A_\infty$-category, the space $\hom_{N(C)}(x,x)$ is homotopy equivalent to the Dold-Kan construction applied to the chain complex $\hom_C(x,x)$. See for example Proposition~2.3.12 of~\cite{tanaka-thesis}. Further, morphisms $f \in \hom^0_C(x,x)$ that are invertible up to homotopy are sent to edges $x \to x$ in $N(C)$ that are homotopy invertible. On the other hand, homotopy classes of such homotopy-invertible edges is one model for $\pi_1(N(C)^{\sim},x)$. So we find that
	\eqnn
	\pi_1(N(C)^{\sim},x)
	\cong
	H^0(\hom_C(x,x))^\times.
	\eqnnd
Finally, going back to the fact that the Dold-Kan construction applied to $\hom_C(x,x)$ is homotopy equivalent to $\hom_{N(C)}(x,x)$, and noting that the homotopy groups of the latter are isomorphic to those of $\hom_{N(C)^\sim}(x,x)$, we find isomorphisms
	\eqnn
	\pi_i \hom_{N(C)^{\sim}}(x,x)
	\cong
	H^{-i} \hom_C(x,x),
	\qquad
	i \geq 1
	\eqnnd
For any $\infty$-category $\cC$, the space $\hom_{\cC}(x,x)$ is homotopy equivalent to the based loop space of $\cC^\sim$ based at $x$, so we find
	\eqnn
	\pi_{i+1} N(C)^{\sim}(x,x)
	\cong
	H^{-i} \hom_C(x,x).
	\eqnnd
The claims of the Corollary follow by taking $C = \fun_{A_\infty}(A,B)$.
\end{proof}

\subsection{Proof of Corollary~\ref{cor. HH}}
\begin{proof}
This is immediate from Corollary~\ref{cor. cohomology and homotopy groups} by taking $B = A'$ and $f=g=\id_{A'}$. 
\end{proof}

\section{Appendix: Alternate proof of Lemma~\ref{lemma. j has left inverse}}
Here we provide an alternate proof of Lemma~\ref{lemma. j has left inverse} through explicit computation. 

\subsection{The ideal $\II$}
Fix an $A_\infty$-category $A$. Recall that an {\em $A_\infty$-ideal} $I$ of $A$ is a collection of graded $\kk$-modules
	\eqnn
	I(X,Y) \subset \hom_A(X,Y),
	\qquad
	X,Y \in \ob A
	\eqnnd
such that for all $k \geq 1$, $m^k(\bx_k,\ldots,\bx_1)$ is an element of $I$ whenever at least one of $\bx_i$ is in $I$. One may then define the quotient $A_\infty$-category $A/I$ to have the same objects as $A$, with morphism complexes
	\eqnn
	\hom_{A/I}(X,Y) := \hom_A(X,Y)/I(X,Y)
	\eqnnd
with $A_\infty$-operations induced by those of $A$. Given a collection of morphisms $K$ of $A$, the $A_\infty$-ideal generated by $K$ is the smallest $A_\infty$-ideal containing $K$. 

\begin{notation}[$\II$]
\label{notation. II}
Let $A$ be a strictly unital $A_\infty$-category. For every object $X \in \ob A$, let $u_X$ denote the strict unit of $X$. 
Consider the $A_\infty$-category $\Aploc$, and let $1_X \in \hom_{A^+}(X,X)$ denote the augmentation unit of $X$. We let $\II$ denote the $A_\infty$-ideal inside $\Aploc$ generated by the elements
	\eqnn
	1_X - u_X \in \hom_{A^+}(X,X) \subset \hom_{\Aploc}(X,X)
	\qquad
	X \in \ob A.
	\eqnnd
\end{notation}

In general it is difficult to concretely describe the ideal generated by a collection of elements. $\II$ admits a tractable description:

\begin{prop}
\label{prop. the ideal I}
For every pair of objects $X,Y \in \ob A$, the complex $\II(X,Y)$ is -- as a graded $\kk$-module -- spanned by: 
\begin{itemize}
\item  elements of length $1$ that are scalar multiples of $1_X - u_X$ when $X = Y$. (When $X \neq Y$, the length-1 filtered piece of $\II(X,Y)$ is zero.)
\item elements of length $l\geq 2$ of the form
	\eqn\label{eqn. words with units}
	\bx_l | \ldots |\bx_{i+1} | \bw | \bx_{i-1} | \ldots | \bx_1,
	\qquad 1 \leq i \leq l,
	\eqnd
where $\bw$ is some $\kk$-linear combination of
	\eqnn
	a \tensor (1_X - u_X),
	\qquad
	b \tensor (1_X - u_X),
	\qquad
	c \tensor (1_X - u_X),
	\qquad\text{and}\qquad
	d \tensor (1_X - u_X).
	\eqnnd
Here, we are invoking the notation from~\eqref{eqn. a b c d tensors} and~\eqref{eqn. abcd sum tensor}. When $i = 1$, $\bw$ of course contains no $b$ and no $d$ terms. Likewise when $i=l$,  $\bw$ contains no $c$ and no $d$ terms.
\end{itemize}
\end{prop}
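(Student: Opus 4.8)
The plan is to prove $\JJ=\II$, where I write $\JJ(X,Y)\subseteq\hom_{\Aploc}(X,Y)$ for the graded $\kk$-submodule spanned by the elements described in the statement, which I will call \emph{bad words}: the length-$1$ multiples of $1_X-u_X$, and the words of length $l\geq 2$ one of whose letters is a $\kk$-combination of $a\tensor(1_X-u_X),\,b\tensor(1_X-u_X),\,c\tensor(1_X-u_X),\,d\tensor(1_X-u_X)$ (with $b,d$ absent when the letter is rightmost and $c,d$ absent when it is leftmost). Everything rests on one computation: if $k\geq 1$ and one input of $m^k_{A^+}$ is $\lambda(1_X-u_X)$, then $m^k_{A^+}=0$ unless $k=2$, in which case the output is a scalar multiple of $1_X-u_X$ — precisely $\lambda$ times the $1_X$-coefficient of the other input, which is forced to vanish unless that input lies in $\hom_{A^+}(X,X)$. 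Indeed $1_X$ is a strict unit of $A^+$ and $u_X$ is a strict unit of $A$, so $m^k_{A^+}(\dots,1_X,\dots)=0=m^k_{A^+}(\dots,u_X,\dots)$ for $k\geq 3$, while the $k=2$ case is the direct evaluation of $m^2_{A^+}(z,1_X-u_X)$ and $m^2_{A^+}(1_X-u_X,z)$.

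\emph{The inclusion $\II\subseteq\JJ$.} I would show $\JJ$ is an $A_\infty$-ideal; since it is a graded $\kk$-submodule containing the generators $1_X-u_X$, that suffices. Using the bar-complex description of $m^N_{\Aploc}$ from Recollection~\ref{recollection. localizations of Aoo cats} (a signed sum of terms $\id^{\tensor\alpha}\tensor m^\beta_{\Tw A^+}\tensor\id^{\tensor\gamma}$), consider an operation one of whose input words has a bad letter $\bw$. In a term whose $m^\beta_{\Tw A^+}$-block misses $\bw$, the letter $\bw$ persists into the output word, which is thus again bad. In a term whose block meets $\bw$, the matrix entries of $m^\beta_{\Tw A^+}(\dots,\bw,\dots)$ are $\kk$-sums of $m^k_{A^+}$'s each fed a $\lambda(1_X-u_X)$-entry; by the computation each entry is $0$ unless it lands in $\hom_{A^+}(X,X)$, where it is a multiple of $1_X-u_X$. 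Hence the collapsed letter is $0$ or again bad (its source and target being forced to be cones of units $X\to X$, or $X$ itself at an endpoint), so the output word lies in $\JJ$. By $\kk$-multilinearity of $m^N_{\Aploc}$, $\JJ$ is closed under all operations, so $\II\subseteq\JJ$. (Alternatively, one identifies $\JJ$ entrywise with the kernel of the map $\Aploc\to A[\id_A^{-1}]$ induced by the retraction $A^+\to A$ identifying the strict and augmentation units.)

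\emph{The inclusion $\JJ\subseteq\II$.} It suffices to put each bad word $W$ into $\II$, and by $\kk$-linearity I may assume its bad letter $\bw$ occupies a single position $i$. If $i=1$, write $W=\bx_l\,\vert\,\cdots\,\vert\,\bx_2\,\vert\,\bw$ with $\bw\colon X\to\cone(e_1)$; let $\hat z\colon X\to\cone(e_1)$ be the same $\kk$-combination with $1_X$ in place of $1_X-u_X$, and set $V:=\bx_l\,\vert\,\cdots\,\vert\,\bx_2\,\vert\,\hat z$. In $m^2_{\Aploc}(V,\,1_X-u_X)$ the right input has length $1$, forcing $\gamma=0$ and $\beta\geq 2$; terms with $\beta\geq 3$ involve $m^k_{A^+}$ with $k\geq 3$ applied with a $(1_X-u_X)$-input and so vanish, while the unique $\beta=2$ term is $\pm\,\bx_l\,\vert\,\cdots\,\vert\,\bx_2\,\vert\,m^2_{\Tw A^+}(\hat z,1_X-u_X)=\pm\,\bx_l\,\vert\,\cdots\,\vert\,\bx_2\,\vert\,\bw=\pm W$. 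Hence $W\in\II$. The case $i=l$ is symmetric, via $m^2_{\Aploc}(1_X-u_X,\,V')$. For $1<i<l$, choose an end-bad-word $U:=\bw'\,\vert\,\bx_{i-1}\,\vert\,\cdots\,\vert\,\bx_1$ with $\bw'\colon\cone(e_{i-1})\to X$ a bad letter — so $U\in\II$ by the $i=l$ case just proved — together with a letter $v\colon X\to\cone(e_i)$, and form $V:=\bx_l\,\vert\,\cdots\,\vert\,\bx_{i+1}\,\vert\,v$. As before, $m^2_{\Aploc}(V,U)$ has a single surviving term $\pm\,\bx_l\,\vert\,\cdots\,\vert\,\bx_{i+1}\,\vert\,m^2_{\Tw A^+}(v,\bw')\,\vert\,\bx_{i-1}\,\vert\,\cdots\,\vert\,\bx_1$, where $m^2_{\Tw A^+}(v,\bw')$ is the rank-$\leq1$ bad letter whose slot coefficients are products of a $1_X$-coefficient of $v$ with a slot coefficient of $\bw'$. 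Since every bad letter is a $\kk$-sum of at most four such rank-$1$ ones, summing finitely many of these expressions (with $\bw'$ and $v$ varying) realizes $W$; as $\II$ is an ideal, $W\in\II$.

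\emph{The main obstacle.} The only step demanding genuine care is the bookkeeping in the previous paragraph: one must verify that in each relevant instance of $m^2_{\Aploc}$ every higher-$\beta$ contribution really vanishes — this is exactly the computation of the first paragraph, now applied inside twisted-complex operations, which themselves expand into $m^k_{A^+}$'s with $k$ at least the arity of the twisted operation — and one must carry the $A_\infty$ signs through $m^2_{\Tw A^+}(\hat z,1_X-u_X)=\bw$ and $m^2_{\Tw A^+}(v,\bw')$ in order to pin down the single overall sign relating $W$ to the operation. The ``build end-bad-words from the generators, then move the bad letter inward by a single $m^2$'' shape of the argument is what lets one avoid ever needing an operation that lengthens words, which one checks never occurs in the bar-complex model.
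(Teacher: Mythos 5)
Your proposal is correct and follows essentially the same route as the paper's proof: define $\JJ$ as the span of the listed elements, use the key vanishing $m^k_{A^+}(\dots,1_X-u_X,\dots)=0$ for $k\geq 3$ (with the $k=2$ case reproducing a multiple of $1_X-u_X$) to show $\JJ$ is an $A_\infty$-ideal, and realize each element of $\JJ$ by applying $m^2_{\Aploc}$ to the generators. Your explicit induction on the position of the bad letter (end positions first, then interior positions via composition with an end-bad-word) is just a more detailed rendering of the paper's terser ``repeating the above arguments and noting that $cb=d$.''
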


\begin{proof}
Let $\JJ(X,Y)$ denote the graded $\kk$-module spanned by the elements described in the Proposition. We will first show that $\JJ(X,Y) \subset \II(X,Y)$.

Fix $W \in \ob A$ and a unit $e \in \hom_A(W,W)$. Let us compute $m^2_{ \Aploc}$ along the subcomplex
	\begin{align}
	\left(
	\hom_{\Tw A^+}(\cone(e),Y) | 
	\hom_{\Tw A^+}(X,\cone(e))\right)
	&\bigotimes
	\hom_{A^+}(X,X)
	\nonumber \\
	\subset
	\hom_{\Aploc}(X,Y)
	&\bigotimes
	\hom_{\Aploc}(X,X). \nonumber
	\end{align}
We have by definition~\eqref{eqn. operations in bar construction} that
	\begin{align}
	m^2_{\Aploc}\left(\bx_2 | \bx_1 \bigotimes (1_X - u_X)\right)
	&= m^3_{\Tw A^+} ( \bx_2 \tensor \bx_1 \tensor (1_X - u_X)) \nonumber \\
	& \qquad  \pm \bx_2 | m^2_{\Tw A^+}(\bx_1 \tensor (1_X - u_X))\nonumber \\
	&= \pm \bx_2 | m^2_{\Tw A^+}(\bx_1 \tensor (1_X - u_X))
	 \nonumber
	\end{align}
where the $m^3$ term vanishes because $1_X$ and $u_X$ are strict units (in $A^+$ and in $A$, respectively). Now, using the notation from~\eqref{eqn. a b c d tensors} and~\eqref{eqn. abcd sum tensor},  let us write
	\eqnn
	\bx_1 = a \tensor p_1 + c \tensor q_1. 
	\eqnnd
Then
	\eqnn
	 m^2_{\Tw A^+}(\bx_1 \tensor (1_X - u_X))
	 =
	 a \tensor m^2_{A^+}(p_1 \tensor (1_X - u_X))
	 \pm c \tensor m^2_{A^+}(q_1 \tensor (1_X - u_X)).
	\eqnnd
(We note that the higher $m^k$ terms in the definition of $m^2_{\Tw A^+}$ again vanish because $1_X,u_X$ are strict units.)
We see that this element is zero unless $W = X$; and even then, it is only sensitive to the $1_X$ components of $p_1$ and of $q_1$. In particular, 
	$m^2_{\Tw A^+}(\bx_1 \tensor (1_X - u_X))$
is a linear combination of terms of the form
	\eqn\label{eqn. m2 Tw of 1 - u}
	a \tensor (1_X - u_X)
	\qquad\text{and}\qquad
	c \tensor (1_X - u_X).
	\eqnd
We conclude that $m^2_{\Aploc}\left(\bx_2 | \bx_1 \bigotimes (1_X - u_X)\right)$ is a linear combination of words of the form
	\eqnn
	\bx_2 | (a \tensor (1_X - u_X))
	\qquad\text{and}\qquad
	\bx_2 | (c \tensor (1_X - u_X)).
	\eqnnd
A similar argument computing $m^2_{\Aploc}$ along
	\eqnn
	\hom_{A^+}(X,X)
	\bigotimes	\left(
	\hom_{\Tw A^+}(\cone(e),X) | 
	\hom_{\Tw A^+}(Y,\cone(e))\right)
	\eqnnd
shows that $\II$ also contains words of length 2 of the form
	\eqnn
	(a \tensor (1_X - u_X)) | \bx_1
	\qquad\text{and}\qquad
	(b \tensor (1_X - u_X)) | \bx_1.
	\eqnnd
Now note that when computing $m^k_{\Aploc}$, any $m^k_{\Tw A^+}$ terms containing a $1_X - u_X$ factor vanishes. Thus, by repeating the above arguments and noting that $cb = d$, we conclude that any element of $\JJ(X,Y)$ can be written as a linear combination of elements obtained by successively applying $A_\infty$-operations to elements of the form $1_X - u_X$. Thus $\JJ(X,Y) \subset \II(X,Y).$

It now suffices to show that $\JJ(X,Y)$ is an $A_\infty$-ideal. So fix objects $X^{(0)},\ldots, X^{(N)} \in \ob A$ and an element of the form
	\eqnn
	\bx^{(N)} \tensor \ldots \tensor \bx^{(1)}
	\in
	\hom_{\Aploc}(X^{(N-1)},X^{(N)}) \tensor \ldots \tensor \hom_{\Aploc}(X^{(0)},X^{(1)})
	\eqnnd
where we assume each $\bx^{(j)}$ is a single word of length $l_j$:
	\eqnn
	\bx^{(j)}
	=
	\bx^{(j)}_{l_j} | \ldots | 
	\bx^{(j)}_{1}
	\in
	\hom_{\Tw A^+}(C^{(j)}_{l_j-1}, X^{(j)}) |\ldots | \hom_{\Tw A^+}(X^{(j-1)}, C^{(j)}_1).
	\eqnnd
Here, for each $1 \leq k \leq l_j$, we have $C^{(j)}_k = \cone(e^{(j)}_k)$ for some unit $e^{(j)}_k \in \hom_A(W^{(j)}_k, W^{(j)}_k)$. We claim: If for some $i \leq N$, $\bx^{i} \in \JJ(X^{(i-1)},X^{(i)})$, then 
	\eqnn
	m^N_{\Aploc}(\bx^{(N)} \tensor \ldots \tensor \bx^{(1)}) \in \JJ(X^{(0)},X^{(N)}). 
	\eqnnd
Writing $m^N_{\Aploc}$ as the summation in~\eqref{eqn. operations in bar construction}, it suffices to check that the four terms
	\eqnn
	m^\beta_{\Tw A^+}(\ldots,a \tensor(1_X-u_X),\ldots),
	\qquad
	m^\beta_{\Tw A^+}(\ldots,b \tensor(1_X-u_X),\ldots),
	\eqnnd
	\eqnn
	m^\beta_{\Tw A^+}(\ldots,c \tensor(1_X-u_X),\ldots),
	\qquad
	m^\beta_{\Tw A^+}(\ldots,d \tensor(1_X-u_X),\ldots)
	\eqnnd
are all elements of $\JJ$. When $\beta \geq 3$, this is obvious because the above terms vanish: $1_X$ and $u_X$ are strict units (in $A^+$ and in $A$, respectively). When $\beta = 2$, the claim follows by repeating the computations surrounding~\eqref{eqn. m2 Tw of 1 - u}.

We are left only to check that $\JJ$ is closed under $m^1_{\Aploc}$. This follows by noting that (i) $m^1_{A^+}(1_X) = 0$ and $m^1_{A^+}(u_X) = 0$, and (ii) For any morphism $f$ in $A$ (and in particular, for any unit in $A$) we have:
	\begin{align}
	m^2_{A^+}(f,1_X - u_X) = f - f = 0 \nonumber \\
	m^2_{A^+}(1_X - u_X,f) = f - f = 0 \nonumber \\
	m^3_{A^+}(1_X - u_X,f,1_X - u_X) = 0 \nonumber .
	\end{align}
In particular, $m^1_{\Tw A^+}( z \tensor (1_X - u_X)) = 0$ when $z$ equals any of $a,b,c,d$.
\end{proof}

\subsection{A left inverse}
Suppose that $A$ and $B$ are strictly unital. Then any strictly unital functor $f: A \to B$ respects strict units, so the induced functor $A^+[\id_A^{-1}] \to B^+[\id_B^{-1}]$ respects the $A_\infty$-ideals $\II_A$ and $\II_B$ defined in Notation~\ref{notation. II}. In particular, the construction $A \mapsto \Aploc/\II_A$ is a functor from $\Ainftystr$ to itself.

\begin{notation}
$\tau_{/\II}$ denotes the functor (in the classical sense)
	\eqnn
	\tau_{/\II}:
	\Ainftystr \to \Ainftystr,
	\qquad
	A \mapsto \Aploc/\II.
	\eqnnd
\end{notation}

\begin{remark}
\label{remark. nat trans tau to tau mod I}
For any strictly unital functor $f: A \to B$, the induced diagram
	\eqn\label{eqn. naturality of mod I}
	\xymatrix{
	A^+[\id_A^{-1}] \ar[r]  \ar[d]
		& B^+[\id_B^{-1}]  \ar[d] \\
	A^+[\id_A^{-1}]/\II_A \ar[r] 
		& B^+[\id_B^{-1}]/\II_B 
	}
	\eqnd
commutes. Note all arrows above are strictly unital functors. So we have witnessed a natural transformation
	\eqnn
	\tau \circ j \to \tau_{/\II}.
	\eqnnd
\end{remark}

\begin{lemma}
\label{lemma. mod I is quasi equivalence}
For any strictly unital $A$, every map in the composition 
	\eqnn
	A \to A[\id_A^{-1}] \to A^+[\id_A^{-1}] \to  A^+[\id_A^{-1}]/\II
	\eqnnd
is a quasi-equivalence of $A_\infty$-categories.
\end{lemma}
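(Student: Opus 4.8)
The plan is to handle the three maps in the composition one at a time, using throughout that each of them is a bijection on objects, so that being a quasi-equivalence is equivalent to being a quasi-isomorphism on every morphism complex. For $A\to A[\id_A^{-1}]$ I would reuse the length-filtration argument behind Lemma~\ref{lemma. right inverse}, which here simplifies because there is no augmentation: the length-$1$ associated graded of $\hom_{A[\id_A^{-1}]}(X,Y)$ is $\hom_A(X,Y)$, and for $l\ge 2$ the $l$-th associated graded is a direct sum of tensor products $\hom_{\tw A}(C_{l-1},Y)\otimes\cdots\otimes\hom_{\tw A}(X,C_1)$ with each $C_i=\cone(e_i)$ a cone on a unit. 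By Example~\ref{example. hom of cones is a cone} the outer factor is a mapping cone of a homotopy equivalence (since $e_{l-1}$ is a unit), hence contractible, so each such tensor product is contractible; thus the associated gradeds vanish in lengths $\ge 2$ and the map is a quasi-isomorphism on homs.

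For $A[\id_A^{-1}]\to\Aploc$ I would invoke naturality of localization (Recollection~\ref{recollection. localizations of Aoo cats}): since the non-unital inclusion $A\to A^+$ carries the collection $\id_A$ to itself, the composite $A\to A[\id_A^{-1}]\to\Aploc$ coincides with $A\to A^+\to\Aploc$, a quasi-equivalence by Lemma~\ref{lemma. right inverse}. Together with the previous step and two-out-of-three for quasi-isomorphisms this shows $A[\id_A^{-1}]\to\Aploc$ is a quasi-equivalence (this is just equation~\eqref{eqn. obvious equivalence}, re-derived so the appendix stands on its own).

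The substantive step is $\Aploc\to\Aploc/\II$, and here I would feed in Proposition~\ref{prop. the ideal I}. Reading its conclusion, $\II(X,Y)$ is, filtration degree by filtration degree, the span of exactly those bar-words with at least one tensor factor in an augmentation direction $a\otimes(1_W-u_W),\,b\otimes(1_W-u_W),\,c\otimes(1_W-u_W),\,d\otimes(1_W-u_W)$ (together with the line $\kk(1_X-u_X)$ in length $1$ when $X=Y$): although the displayed words in that Proposition exhibit a single such factor, the remaining slots there are unconstrained, so their span is precisely the set of words with $\ge 1$ augmentation factor. Consequently $\hom_{\Aploc}(X,Y)/\II(X,Y)$ is, as a graded $\kk$-module, the complementary sub-sum of words with no augmentation factor, and the assignment ``$1_W\mapsto u_W$'' identifies it with $\hom_{A[\id_A^{-1}]}(X,Y)$. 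Because each $u_W$ is a \emph{strict} unit of $A$ — this is exactly where strict unitality of $A$ enters — substituting $u_W$ for $1_W$ carries the operations $m_{\tw A^+}$ used in the bar construction for $\Aploc$ to the operations $m_{\tw A}$ used in the bar construction for $A[\id_A^{-1}]$ (the $m^{\ge 3}$-terms involving $1_W$ and those involving $u_W$ both vanish), so this upgrades to an isomorphism of $A_\infty$-categories $\Aploc/\II\cong A[\id_A^{-1}]$ under which
	\eqnn
	A[\id_A^{-1}]\to\Aploc\to\Aploc/\II\cong A[\id_A^{-1}]
	\eqnnd
is the identity. Since the first arrow is a quasi-equivalence by the previous step, two-out-of-three forces $\Aploc\to\Aploc/\II$ to be a quasi-equivalence; equivalently $\II(X,Y)$ is acyclic for every $X,Y$. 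The composite of the three quasi-equivalences is then a quasi-equivalence, as claimed.

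I expect the bookkeeping in the third step to be the main obstacle: one must check with care that Proposition~\ref{prop. the ideal I} really pins $\II$ down as the span of \emph{all} words containing an augmentation factor, and that the substitution $1_W\mapsto u_W$ is compatible with every $A_\infty$-operation appearing in the bar construction for the localization — both of which hinge on $u_W$ being a strict unit of $A$. A more pedestrian alternative, should the identification prove awkward to write, would be to construct an explicit contracting homotopy for $\II(X,Y)$ modeled on the operators $H,G,K$ in the proof of Lemma~\ref{lemma. right inverse}; but the identification-plus-two-out-of-three route above is shorter.
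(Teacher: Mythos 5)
Your proposal is correct and follows essentially the same route as the paper's proof: the same length-filtration acyclicity argument for the first map, the same use of Proposition~\ref{prop. the ideal I} to identify the quotient $\Aploc/\II$ with $A[\id_A^{-1}]$, and the same appeal to Lemma~\ref{lemma. right inverse} together with two-out-of-three to dispatch the remaining arrows. The only cosmetic difference is that you package the identification as a strict isomorphism of $A_\infty$-categories splitting the quotient map, whereas the paper only matches the associated gradeds of the length filtration; both versions rest on the same observation that the subspaces $\hom_{\Tw A}\subset\hom_{\Tw A^+}$ are preserved by all operations appearing in the bar construction.
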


\begin{proof}
Each map is a bijection on objects so it suffices to show that each map induces a quasi-isomorphism on all hom complexes.

Fix two objects $X,Y \in \ob A$.
Let 
	\eqnn
	F^+_{\leq l} \subset \hom_{\Aploc}(X,Y)
	\qquad\text{and}\qquad
	F_{\leq l} \subset \hom_{A[\id_A^{-1}]}(X,Y)
	\eqnnd
denote the subcomplexes of words of length $\leq l$. We let 
	\eqnn
	G_{\leq l}
	\subset
	\hom_{\Aploc}(X,Y)/\II(X,Y) 
	\eqnnd
denote the image of $F^+_{\leq l}$ in the quotient. Then for each $l \geq 1$, the associated graded complex $G_{\leq l}/G_{\leq l-1}$ is identified with the complex $F_{\leq l}/F_{\leq l-1}$. (This is a consequence of Proposition~\ref{prop. the ideal I}.)

On the other hand, for $l \geq 2$, the $l$th associated graded piece of $F_{\leq \bullet}$ is a direct sum of contractible chain complexes. After all, the summand 
	\eqnn
	\hom_{\Tw A}(C_{l-1},Y) | \hom_{\Tw A}(C_{l-2},C_{l-1}) | \ldots | \hom_{\Tw A}(X, C_1)
	\eqnnd
associated to a choice of units $e_1,\ldots,e_{l-1}$, is a mapping cone of a morphism of the form  
	\eqnn
	\pm m^2(-,e_{l-1}) | \id | \ldots | \id.
	\eqnnd
(See Example~\ref{example. hom of cones is a cone}.) By definition of units, $m^2(-,e_{l-1})| \id | \ldots | \id$ is a homotopy-invertible chain map, so the summand is contractible. (We used a similar argument in the proof of Lemma~\ref{lemma. right inverse}.)

And of course, $F_{\leq 1}$ is precisely $\hom_A(X,Y)$. We conclude that each of the maps
	\eqnn
	\hom_A(X,Y) 
	\to
	\hom_{A[\id_A^{-1}]}(X,Y)
	\to 
	\hom_{\Aploc}(X,Y)/\II(X,Y) 
	\eqnnd
are quasi-isomorphisms. By applying Lemma~\ref{lemma. right inverse}, the result follows.
\end{proof}

\begin{remark}
Though the natural inclusion $A \to \Aploc$ is not a strictly unital functor, if we compose with the quotient map, the result
	\eqn\label{eqn. A to mod I}
	A \to \Aploc/\II
	\eqnd
is strictly unital. We see~\eqref{eqn. A to mod I} is natural in the $A$ variable by combining~\eqref{eqn. naturality of tau} and~\eqref{eqn. naturality of mod I}.
\end{remark}

\begin{proof}[Alternate proof of Lemma~\ref{lemma. j has left inverse}.]
We have natural transformations of functors (in the classical sense) 
	\eqn
	\label{eqn. natural transformations for left inverse}
	\id_{\Ainftystr} \xrightarrow{\eqref{eqn. A to mod I}} \tau_{/\II} \xleftarrow{\text{Remark~\ref{remark. nat trans tau to tau mod I}}} \tau \circ j.
	\eqnd
The naturality of \eqref{eqn. A to mod I} -- along with the two-out-of-three property for quasi-equivalences -- shows that $\tau_{/\II}$ respects quasi-equivalences, so induces an endofunctor of $\Ainftystr[\eqs^{-1}]$. By Lemma~\ref{lemma. mod I is quasi equivalence}  and Proposition~\ref{prop. good natural transformations induce homotopies}, the natural transformations induced by~\eqref{eqn. natural transformations for left inverse} are natural equivalences upon localizing along $\eqs$. 

Thus, the functor induced by $\tau \circ j$ is naturally equivalent to the identity functor, and this exhibits the functor induced by $\tau$ as a left inverse to the functor induced by $j$.
\end{proof}

\bibliographystyle{plain}
\bibliography{20240705-biblio}

\begin{thebibliography}{10}

\bibitem{bespalov-lyubashenko-manzyuk}
Yu. Bespalov, V.~Lyubashenko, and O.~Manzyuk.
\newblock {\em Pretriangulated {{\(A_\infty\)}}-categories}.
\newblock Ky{\"{\i}}v: Instytut Matematyky NAN Ukra{\"{\i}}ny, 2008.

\bibitem{cos-localizations}
Alberto Canonaco, Mattia Ornaghi, and Paolo Stellari.
\newblock Localizations of the category of {$A_\infty$} categories and internal
  {H}oms.
\newblock {\em Doc. Math.}, 24:2463--2492, 2019.

\bibitem{cos-over-rings}
Alberto Canonaco, Mattia Ornaghi, and Paolo Stellari.
\newblock Localizations of the category of ${A}_\infty$ categories and internal
  {H}oms over a ring, 2024.
\newblock arXiv:2404.06610.

\bibitem{canonaco-stellari-internal-homs-via-extensions}
Alberto Canonaco and Paolo Stellari.
\newblock Internal homs via extensions of dg functors.
\newblock {\em Adv. Math.}, 277:100--123, 2015.

\bibitem{drinfeld-dg-quotients}
Vladimir Drinfeld.
\newblock D{G} quotients of {DG} categories.
\newblock {\em J. Algebra}, 272(2):643--691, 2004.

\bibitem{dwyer-kan-calculating}
W.~G. Dwyer and D.~M. Kan.
\newblock Calculating simplicial localizations.
\newblock {\em J. Pure Appl. Algebra}, 18:17--35, 1980.

\bibitem{dwyer-spalinski}
W.~G. Dwyer and J.~Spalinski.
\newblock Homotopy theories and model categories.
\newblock In {\em Handbook of algebraic topology}, pages 73--126. Amsterdam:
  North-Holland, 1995.

\bibitem{faonte-2}
Giovanni Faonte.
\newblock ${A}_\infty$ functors and homotopy theory of dg-categories.
\newblock {\em Journal of Noncommutative Geometry}, 11(3):957--1001, 2017.

\bibitem{faonte-nerve}
Giovanni Faonte.
\newblock Simplicial nerve of an {{\(\mathcal A_\infty\)}}-category.
\newblock {\em Theory Appl. Categ.}, 32:31--52, 2017.

\bibitem{gabriel-zisman}
P.~Gabriel and M.~Zisman.
\newblock {\em Calculus of fractions and homotopy theory.}, volume~35 of {\em
  Ergeb. Math. Grenzgeb.}
\newblock Springer-Verlag, Berlin, 1967.

\bibitem{gps-covariant}
Sheel Ganatra, John Pardon, and Vivek Shende.
\newblock Covariantly functorial wrapped {Floer} theory on {Liouville} sectors.
\newblock {\em Publ. Math., Inst. Hautes {\'E}tud. Sci.}, 131:73--200, 2020.

\bibitem{gps-descent}
Sheel Ganatra, John Pardon, and Vivek Shende.
\newblock Sectorial descent for wrapped {Fukaya} categories.
\newblock {\em J. Am. Math. Soc.}, 37(2):499--635, 2024.

\bibitem{gepner-haugseng}
David Gepner and Rune Haugseng.
\newblock Enriched {$\infty$}-categories via non-symmetric {$\infty$}-operads.
\newblock {\em Adv. Math.}, 279:575--716, 2015.

\bibitem{getzler-jones-cyclic}
Ezra Getzler and John D.~S. Jones.
\newblock A{{\({}_{\infty}\)}}-algebras and the cyclic bar complex.
\newblock {\em Ill. J. Math.}, 34(2):256--283, 1990.

\bibitem{haugseng-rectification-enriched}
Rune Haugseng.
\newblock Rectification of enriched {$\infty$}-categories.
\newblock {\em Algebr. Geom. Topol.}, 15(4):1931--1982, 2015.

\bibitem{hinich-dwyer-kan-revisited}
Vladimir Hinich.
\newblock Dwyer-{Kan} localization revisited.
\newblock {\em Homology Homotopy Appl.}, 18(1):27--48, 2016.

\bibitem{hovey-model-categories}
Mark Hovey.
\newblock {\em Model categories}, volume~63 of {\em Mathematical Surveys and
  Monographs}.
\newblock American Mathematical Society, Providence, RI, 1999.

\bibitem{keller-intro-to-algebras-modules}
Bernhard Keller.
\newblock Introduction to {{\(A\)}}-infinity algebras and modules.
\newblock {\em Homology Homotopy Appl.}, 3(1):1--35, 2001.

\bibitem{krause-localization-theory}
Henning Krause.
\newblock {\em Localization theory for triangulated categories}, page
  161–235.
\newblock London Mathematical Society Lecture Note Series. Cambridge University
  Press, 2010.

\bibitem{last-1}
Oleg Lazarev, Zachary Sylvan, and Hiro~Lee Tanaka.
\newblock The infinity-category of stabilized liouville sectors, 2022.
\newblock arXiv:2110.11754.

\bibitem{htt}
Jacob Lurie.
\newblock {\em Higher topos theory}, volume 170 of {\em Annals of Mathematics
  Studies}.
\newblock Princeton University Press, Princeton, NJ, 2009.

\bibitem{higher-algebra}
Jacob Lurie.
\newblock Higher algebra.
\newblock Available at
  \url{http://www.math.harvard.edu/~lurie/papers/higheralgebra.pdf}, 2012.

\bibitem{kerodon}
Jacob Lurie.
\newblock Kerodon.
\newblock \url{https://kerodon.net}, 2024.

\bibitem{lyubashenko-category-of}
Volodymyr Lyubashenko.
\newblock Category of {$A_\infty$}-categories.
\newblock {\em Homology Homotopy Appl.}, 5(1):1--48, 2003.

\bibitem{lyubashenko-manzyuk-bimodules}
Volodymyr Lyubashenko and Oleksandr Manzyuk.
\newblock {{\(A_{\infty}\)}}-bimodules and serre {{\(A_\infty\)}}-functors.
\newblock In {\em Geometry and dynamics of groups and spaces. In memory of
  Alexander Reznikov. Partly based on the international conference on geometry
  and dynamics of groups and spaces in memory of Alexander Reznikov, Bonn,
  Germany, September 22--29, 2006}, pages 565--645. Basel: Birkh{\"a}user,
  2008.

\bibitem{lyubashenko-manzyuk}
Volodymyr Lyubashenko and Oleksandr Manzyuk.
\newblock Quotients of unital {$A_\infty$}-categories.
\newblock {\em Theory Appl. Categ.}, 20:No. 13, 405--496, 2008.

\bibitem{lyubashenko-ovsienko}
Volodymyr Lyubashenko and Sergiy Ovsienko.
\newblock A construction of quotient {$A_\infty$}-categories.
\newblock {\em Homology Homotopy Appl.}, 8(2):157--203, 2006.

\bibitem{maclane-homology}
Saunders Mac~Lane.
\newblock {\em Homology}.
\newblock Class. Math. Berlin: Springer-Verlag, reprint of the 3rd corr. print.
  1975 edition, 1995.

\bibitem{may-operads-and-sheaf-cohomology}
Peter May.
\newblock Operads and sheaf cohomology.
\newblock Available at
  \url{https://www.math.uchicago.edu/~may/PAPERS/Esheaf.pdf}, December 2003.

\bibitem{mazel-gee-quillen-adjunctions}
Aaron Mazel-Gee.
\newblock Quillen adjunctions induce adjunctions of quasicategories.
\newblock {\em New York J. Math.}, 22:57--93, 2016.

\bibitem{muro}
Fernando Muro.
\newblock Dwyer-{Kan} homotopy theory of enriched categories.
\newblock {\em J. Topol.}, 8(2):377--413, 2015.

\bibitem{oh-tanaka-localizations}
Yong-Geun Oh and Hiro~Lee Tanaka.
\newblock $\infty$-categorical universal properties of quotients and
  localizations of ${A}_\infty$-categories.
\newblock arXiv:2003.05806, 2024.

\bibitem{pascaleff}
James Pascaleff.
\newblock Remarks on the equivalence between differential graded categories and
  a-infinity categories.
\newblock {\em Homology Homotopy Appl.}, 26(1):275 -- 285, 2024.

\bibitem{spaltenstein-resolutions}
N.~Spaltenstein.
\newblock Resolutions of unbounded complexes.
\newblock {\em Compositio Math.}, 65(2):121--154, 1988.

\bibitem{tabuada-model-structures-on-dg-cats}
Goncalo Tabuada.
\newblock Une structure de cat\'{e}gorie de mod\`eles de {Q}uillen sur la
  cat\'{e}gorie des dg-cat\'{e}gories.
\newblock {\em C. R. Math. Acad. Sci. Paris}, 340(1):15--19, 2005.

\bibitem{tanaka-thesis}
Hiro~Lee Tanaka.
\newblock A functor from {L}agrangian cobordisms to the {F}ukaya category.
\newblock Ph.D. Thesis, Northwestern University, 2013.

\bibitem{tanaka-units}
Hiro~Lee Tanaka.
\newblock Functors (between {{\(\infty \)}}-categories) that aren't strictly
  unital.
\newblock {\em J. Homotopy Relat. Struct.}, 13(2):273--286, 2018.

\bibitem{toen-homotopy-theory-of-dg-cats}
Bertrand To\"{e}n.
\newblock The homotopy theory of {$dg$}-categories and derived {M}orita theory.
\newblock {\em Invent. Math.}, 167(3):615--667, 2007.

\end{thebibliography}

\end{document}